\documentclass[
  a4paper,oneside,DIV=12,
  12pt,
  headsepline,
  ]{scrartcl}

\usepackage{etoolbox}
\usepackage{ifdraft}
\usepackage{iftex}

\ifluatex
\usepackage[utf8]{luainputenc}
\else
\usepackage[utf8]{inputenc}
\usepackage[T1]{fontenc}
\fi

\usepackage[english]{babel}
\usepackage[autostyle=true]{csquotes}

\usepackage{lmodern}
\usepackage{fourier}

\usepackage{amsfonts}
\usepackage{mathrsfs} %
\usepackage{dsfont}

\usepackage{amssymb}
\usepackage{stmaryrd}    %

\usepackage{amsmath}
\usepackage{mathtools}
\usepackage[thmmarks, amsmath, amsthm]{ntheorem}
\usepackage{nccmath}    %
\usepackage{tensor}

\usepackage[
  hscale=0.8,vscale=0.875,
  includehead,footskip=2\baselineskip,
  ]{geometry}

\usepackage[draft=false]{scrlayer-scrpage}
\usepackage{needspace}

\usepackage[cmyk,dvipsnames]{xcolor}

\ifdraft{%
\usepackage[textsize=scriptsize,colorinlistoftodos]{todonotes}
\usepackage{lineno}
\usepackage{scrtime}
}{}

\usepackage{booktabs}
\usepackage[inline]{enumitem}
\usepackage{lettrine}
\usepackage{url}

\usepackage[backend=biber,style=numeric-comp,giveninits,url=false,sortcites=true,maxbibnames=99]{biblatex}
\usepackage[final,
            pdfborder={0 0 0}, colorlinks=true,
            linkcolor=BrickRed, citecolor=ForestGreen, urlcolor=RoyalBlue]{hyperref}

\ifdraft{%
\linenumbers
\newcommand*\patchAmsMathEnvironmentForLineno[1]{%
  \expandafter\let\csname old#1\expandafter\endcsname\csname #1\endcsname
  \expandafter\let\csname oldend#1\expandafter\endcsname\csname end#1\endcsname
  \renewenvironment{#1}%
     {\linenomath\csname old#1\endcsname}%
     {\csname oldend#1\endcsname\endlinenomath}}%
\newcommand*\patchBothAmsMathEnvironmentsForLineno[1]{%
  \patchAmsMathEnvironmentForLineno{#1}%
  \patchAmsMathEnvironmentForLineno{#1*}}%
\AtBeginDocument{%
\patchBothAmsMathEnvironmentsForLineno{equation}%
\patchBothAmsMathEnvironmentsForLineno{align}%
\patchBothAmsMathEnvironmentsForLineno{flalign}%
\patchBothAmsMathEnvironmentsForLineno{alignat}%
\patchBothAmsMathEnvironmentsForLineno{gather}%
\patchBothAmsMathEnvironmentsForLineno{multline}%
}}

\clearmainofpairofpagestyles
\automark[section]{subsection}

\renewcommand{\subsectionmark}[1]{}

\lohead{{\small\normalfont \headertitle}}
\rohead{{\small \headerauthors}}

\RedeclareSectionCommand[%
  font=\Large\sffamily\bfseries,%
  beforeskip=1\baselineskip,%
  afterskip=0.5\baselineskip,%
  indent=0em,%
  afterindent=false,%
  tocbeforeskip=0.3\baselineskip plus 1pt minus 1pt%
  ]{section}

\RedeclareSectionCommands[%
  font=\normalfont\bfseries,%
  beforeskip=3pt,%
  afterskip=-1em%
  ]{subsection,subsubsection}

\RedeclareSectionCommands[%
  font=\normalfont\itshape,%
  beforeskip=.5\baselineskip,%
  afterskip=-1em,%
  indent=0pt,%
]{paragraph}

\AtEveryBibitem{\clearfield{doi}}
\AtEveryBibitem{\clearfield{isbn}}
\AtEveryBibitem{\clearfield{issn}}
\AtEveryBibitem{\clearfield{pages}}
\AtEveryBibitem{\clearlist{language}}

\renewcommand*{\bibfont}{\footnotesize}
\renewbibmacro*{in:}{}
\DeclareFieldFormat
  [article,inbook,incollection,inproceedings,patent,thesis,unpublished,misc]
  {title}{#1}

\setitemize{itemsep=0.02\baselineskip}

\newenvironment{enumeratearabic}{
\begin{enumerate}[label=(\arabic*),%
  leftmargin=2.5em,itemindent=0pt,%
  labelindent=.5em,labelwidth=1.5em,labelsep=!,%
  noitemsep]
}{
\end{enumerate}
}

\newenvironment{enumerateroman}{
\begin{enumerate}[label=(\roman*),%
  leftmargin=2.5em,itemindent=0pt,%
  labelindent=.5em,labelwidth=1.5em,labelsep=!,%
  nosep]
}{
\end{enumerate}
}

\newenvironment{enumeratearabic*}{
\begin{enumerate*}[label=(\arabic*)] %
}{
\end{enumerate*}
}

\newenvironment{enumerateroman*}{
\begin{enumerate*}[label=(\roman*)] %
}{
\end{enumerate*}
}

\numberwithin{equation}{section}

\theoremnumbering{arabic}
\newtheorem{theoremcounter}{theoremcounter}[section]
\theoremnumbering{Alph}
\newtheorem{maintheoremcounter}{maintheoremcounter}

\theoremstyle{plain}

\newtheorem{corollary}[theoremcounter]{Corollary}
\newtheorem{lemma}[theoremcounter]{Lemma}

\newtheorem{proposition}[theoremcounter]{Proposition}

\theoremstyle{plain}

\newtheorem{maintheorem}[maintheoremcounter]{Theorem}

\theoremstyle{definition}

\newtheorem{definition}[theoremcounter]{Definition}

\theoremstyle{remark}

\newtheorem{remark}[theoremcounter]{Remark}

\theoremstyle{nonumberremark}

\newtheorem{mainremark}{Remark}

\newenvironment{mainremarkenumerate}
{%
\mainremark
\enumeratearabic
}{%
\endenumeratearabic
\endmainremark
}%

\newcommand{\tx}{\text}
\newcommand{\thdash}{\nbd th}
\newcommand{\nbd}{\nobreakdash-\hspace{0pt}}

\newcommand{\fref}[2]{\hyperref[#2]{#1~\ref*{#2}}}

\newcommand{\texpdf}[2]{\texorpdfstring{#1}{#2}}

\makeatletter
\newcommand{\writelabel}[1]{#1\def\@currentlabel{#1}}
\makeatother

\newcommand{\minwidthmathbox}[2]{%
  \mathmakebox[{\ifdim#1<\width\width\else#1\fi}]{#2}%
}

\newcommand{\tbf}{\bfseries}

\newcommand{\bbM}{\mathbb{M}}

\newcommand{\bbone}{\mathds{1}}

\newcommand{\cO}{\mathcal{O}}

\newcommand{\frake}{\mathfrak{e}}

\newcommand{\rmd}{\mathrm{d}}

\newcommand{\rmt}{\mathrm{t}}

\newcommand{\rmv}{\mathrm{v}}

\newcommand{\rmH}{\mathrm{H}}

\newcommand{\rmM}{\mathrm{M}}

\newcommand{\rmS}{\mathrm{S}}

\newcommand{\td}{\tilde}

\newcommand{\ov}{\overline}

\newcommand{\defcol}{\mathrel{:}}
\newcommand{\defeq}{\mathrel{:=}}

\newcommand{\condsep}{\mathrel{\;:\;}}

\newcommand{\mrelspace}[1]{\mathrel{\mspace{#1}}}

\NewCommandCopy{\rightarroworig}{\rightarrow}
\renewcommand{\rightarrow}
  {\protect\relbar\mrelspace{-9.7mu}\rightarroworig}

\NewCommandCopy{\leftarroworig}{\leftarrow}
\renewcommand{\leftarrow}
  {\protect\leftarroworig\mrelspace{-9.7mu}\relbar}

\renewcommand{\longrightarrow}
  {\protect\relbar\mrelspace{-3.2mu}\relbar\mrelspace{-9.5mu}\rightarroworig}
\newcommand{\longhookrightarrow}
  {\protect\lhook\mrelspace{-3.1mu}\relbar\mrelspace{-3.2mu}\relbar\mrelspace{-11.7mu}\rightarroworig}

\newcommand{\ra}{\rightarrow}

\newcommand{\lra}{\longrightarrow}
\newcommand{\lhra}{\longhookrightarrow}

\newcommand{\mto}{\mapsto}
\newcommand{\lmto}{\longmapsto}

\renewcommand{\pmod}[1]{\;(\mathrm{mod}\, #1)}

\newcommand{\id}{\mathrm{id}}

\newenvironment{psmatrix}{\left(\begin{smallmatrix}}{\end{smallmatrix}\right)}

\newcommand{\lT}[1]{\mathord{\mspace{1mu}\tensor*[^\rmt]{#1}{}}}

\newcommand{\ZZ}{\mathbb{Z}}
\newcommand{\QQ}{\mathbb{Q}}
\newcommand{\RR}{\mathbb{R}}
\newcommand{\CC}{\mathbb{C}}

\newcommand{\Mp}[1]{\mathrm{Mp}_{#1}}

\newcommand{\SL}[1]{\mathrm{SL}_{#1}}

\newcommand{\HS}{\mathbb{H}}

\newcommand{\Ga}{\Gamma}
\newcommand{\ga}{\gamma}
\newcommand{\om}{\omega}

\newcommand{\hol}{\mathrm{hol}}
\DeclareMathOperator{\Ind}{Ind}
\newcommand{\genInd}{\operatorname{genInd}}

\newcommand{\intrmd}{\rmd\mspace{-2mu}}

\newcommand{\pluspad}{\mathbin{\mspace{2mu}+\mspace{2mu}}}

\newcommand{\vdotssmall}{\vphantom{\int\limits^{.}}\smash{\vdots}}

\newcommand{\rhoH}{\rho_{\rmH}}

\title{%
  Weighted Recursions for\\Hurwitz Class Numbers
}
\author{%
  Matthew Ortiz%
  \and%
  Martin Raum%
  \thanks{The author was partially supported by Vetenskapsr\aa det Grant~2023-04217.}%
  \and%
  Olav K. Richter
  \thanks{The author was partially supported by a grant from the Simons Foundation (\#835652 to Olav Richter).}
}
\newcommand{\headertitle}{%
  Weighted Recursions for Hurwitz Class Numbers
}
\newcommand{\headerauthors}{%
  M.~Ortiz,
  M.~Raum,
  O.~Richter
}
\ifdraft{\date{\today\ at\ \thistime}}{\date{}}

\begin{document}

\thispagestyle{scrplain}
\begingroup
\deffootnote[1em]{1.5em}{1em}{\thefootnotemark}
\maketitle
\endgroup

\begin{abstract}
  \small
  \noindent
  {\tbf Abstract:}
  We establish new recursions for Hurwitz class numbers with polynomial weights.
  In contrast to previous recursions, our results decouple class numbers of even and odd discriminants.
  Our main tool is the vector-valued holomorphic projection operator applied to mock modular forms.
  We invoke representation theory to connect the relevant spaces of vector-valued modular forms to spaces of classical new and old forms.
  We thereby leverage the vanishing of spaces of vector-valued cusp forms not available in the scalar case.
  \\[.3\baselineskip]
  \noindent
  \textsf{\textbf{%
      Hurwitz class numbers%
    }}%
  \noindent
  \ {\tiny$\blacksquare$}\ %
  \textsf{\textbf{%
      vector-valued mock modular forms%
    }}%
  \noindent
  \ {\tiny$\blacksquare$}\ %
  \textsf{\textbf{%
      holomorphic projection%
    }}%
  \nolinebreak
  \ {\tiny$\blacksquare$}\ %
  \noindent
  \textsf{\textbf{%
      Rankin--Cohen brackets%
    }}
  \\[.2\baselineskip]
  \noindent
  \textsf{\textbf{%
      MSC Primary: 11E41%
    }}%
  \ {\tiny$\blacksquare$}\ %
  \textsf{\textbf{%
      MSC Secondary: 11F30, 11F37%
    }}
\end{abstract}

\Needspace*{4em}
\phantomsection
\label{sec:introduction}
\addcontentsline{toc}{section}{Introduction}
\markright{Introduction}

\lettrine[lines=2,nindent=.2em]{\tbf I}{}ntroduced over a century ago, {Hurwitz} class numbers~$H(n)$ encode fundamental arithmetic information: For a negative discriminant~$-D < -4$,~$H(D)$ equals the class number of the imaginary quadratic field~$\QQ(\sqrt{-D})$, weighted by the reciprocal of the number of units. Recursions for~$H(n)$ have a particularly rich history, dating back to the work of Kronecker, Giester, and Hurwitz in the 19th century~\cite{kronecker-1860,giester-1882,hurwitz-1885}. With~$\sigma_s(N) = \sum_{ab = N} a^s$ and~$\lambda_s(N) = \sum_{ab = N} \min\{a,b\}^s$, a classical example is the relation
\begin{alignat}{5}
  \label{eq:classical_hurwitz_class_number_recursion}
  \tag{HR}
   &
  \sum_{\mathclap{m \in \ZZ}}
  \mspace{3mu}
  H(4N - m^2)
   &   & =
  -
  \lambda_1(N)
  +
  2 \sigma_1(N)
  \quad
   &   & \tx{for all } N \in \ZZ
  \tx{.}
   &
  \intertext{Later, Cohen~\cite{cohen-1975} exhibited weighted recursions for Hurwitz class numbers, for instance}
  \label{eq:hurwitz_class_number_recursion_Cohen1}
  \tag{wHR1}
   &
  \sum_{\mathclap{m \in \ZZ}}
  \mspace{3mu}
  (m^2-N)H(4N-m^2)
   &   & =
  -\lambda_3(N)
  \quad
   &   & \tx{for all } N \in \ZZ
  \tx{;}
  \\
  \label{eq:hurwitz_class_number_recursion_Cohen2}
  \tag{wHR2}
   &
  \sum_{\mathclap{m \in \ZZ}}
  \mspace{3mu}
  (m^4-3Nm^2+N^2)
  H(4N-m^2)
   &   & =
  -\lambda_5(N)
  \quad
   &   & \tx{for all } N \in \ZZ
  \tx{.}
\end{alignat}
Cohen conjectured further such identities, which were established by Mertens~\cite{mertens-2014}.

A common feature of the classical recursion in~\eqref{eq:classical_hurwitz_class_number_recursion} and all other known weighted recursions of Hurwitz class numbers is that they mix class numbers for both even and odd discriminants. In comparison, our main \fref{Theorem}{mainthm:hurwitz_class_number_recursions} provides weighted recursions for Hurwitz class numbers that feature only even or only odd discriminants.

\begin{maintheorem}%
\label{mainthm:hurwitz_class_number_recursions}
Let $N \in \ZZ$. The Hurwitz class numbers satisfy the following recursions:
\begin{alignat*}{4}
   &
  \sum_{\mathclap{m^2 \leq N}}
  \mspace{24mu}
   &   &
  g_3^{\mathrm{even}}(N, m)\,
   &   &
  H(4N - 4m^2)
   &   & =
  \lambda^{\mathrm{even}}_3(N)
  \tx{,}
  \\
   &
  \sum_{\mathclap{m^2 + m + 1 \leq N}}
   &   &
  g_3^{\mathrm{odd}}(N, m)\,
   &   &
  H(4N - 4m^2 - 4m - 1)
   &   & =
  \lambda^{\mathrm{odd}}_3(N)
  \tx{,}
  \\
   &
  \sum_{\mathclap{m^2 \leq 2N}}
   &   &
  g_5^{\mathrm{even}}(2N, m)\,
   &   &
  H(8N - 4m^2)
   &   & =
  \lambda^{\mathrm{even}}_5(2N)
  \tx{,}
  \\
   &
  \sum_{\mathclap{m^2 + m + 1 \leq 2N}}
   &   &
  g_5^{\mathrm{odd}}(2N, m)\,
   &   &
  H(8N - 4m^2 - 4m - 1)
   &   & =
  \lambda^{\mathrm{odd}}_5(2N)
  \tx{,}
\end{alignat*}
where
\begin{align*}
  g_3^{\mathrm{even}}(N, m)
   & =
  N - 4m^2
  \tx{,}\mspace{8mu}
  g_3^{\mathrm{odd}}(N, m)
  =
  N - 4m^2 - 4m - 1
  \tx{,}\mspace{8mu}
  g_5^{\mathrm{even}}(N, m)
  =
  -16m^4 + 24Nm^2 - 4N^2
  \tx{,}
  \\
  g_5^{\mathrm{odd}}(N, m)
   & =
  -16m^4 - 32m^3 + 12(N-2)m^2 + 4(3N-2)m - N^2 + 3N - 1
\end{align*}
and
\begin{gather}
  \label{eq:holomorphic_projection:hurwitz:lambdas}
  \tag{\text{$\Lambda$}}
  \lambda_k^{\mathrm{even}}(N)
  \defeq
  \mspace{-12mu}
  \sum_{\substack {ab=N \\ a\equiv b\pmod{2}}}
  \mspace{-9mu}
  \min\{a,b\}^k
  \tx{,}\qquad
  \lambda_k^{\mathrm{odd}}(N)
  \defeq
  \mspace{-12mu}
  \sum_{\substack {ab=N \\ a\not\equiv b\pmod{2}}}
  \mspace{-9mu}
  \min\{a,b\}^k
  \tx{.}
\end{gather}
\end{maintheorem}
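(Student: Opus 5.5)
We follow the strategy announced in the abstract: realize the left-hand sides as Fourier coefficients of vector-valued holomorphic projections of Rankin--Cohen brackets, and then show that the relevant spaces of vector-valued cusp forms vanish. Only the $g$-polynomials and the divisor sums $\lambda^{\mathrm{even/odd}}_k$ come from this machinery.

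\emph{Setting up the ingredients.} Split Zagier's weight~$3/2$ mock modular form $\mathcal{H}(\tau) = -\frac{1}{12} + \sum_{n \ge 1} H(n) q^n$ into its components
\[
\mathcal{H}_0(\tau) = \sum_{n \equiv 0 \pmod{4}} H(n) q^{n/4}, \qquad
\mathcal{H}_1(\tau) = \sum_{n \equiv 3 \pmod{4}} H(n) q^{n/4}.
\]
These form a vector-valued mock modular form of weight~$3/2$ for the dual Weil representation of the lattice $\ZZ$ with form $x^2$. Its shadow is the vector-valued theta series $(\theta_0, \theta_1)$, where $\theta_0 = \sum_m q^{m^2}$ and $\theta_1 = \sum_m q^{(m+1/2)^2}$.
\begin{itemize}
\item For the first two recursions, pair $\mathcal{H}$ with the vector-valued theta function $(\theta_0, \theta_1)$ of weight~$1/2$. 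Take the first Rankin--Cohen bracket $[\mathcal{H}, \theta]_1$ and contract it against the Weil representation, choosing the tensor component that keeps the $0$-component and the $1$-component separate. The result is a vector-valued form of weight $3/2 + 1/2 + 2 = 4$ for the Weil representation of a rank-two lattice. One can check that the weight polynomial of the bracket, $\frac{3}{2}\cdot\frac{m^2}{1}\cdot\frac{1}{2}$-type combinations, reproduces $N - 4m^2$ and $N - (2m+1)^2$ up to normalization.
\item For the last two recursions, use the lattice with form $2x^2$, i.e.\ $\theta(2\tau)$ components. This accounts for the $8N$ in place of $4N$. Take the second Rankin--Cohen bracket to reach weight~$6$. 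Expanding the standard coefficients $\binom{1/2}{2}, \binom{3/2}{2}$, etc.\ then gives $g_5^{\mathrm{even}}$ and $g_5^{\mathrm{odd}}$.
\end{itemize}

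\emph{Holomorphic projection.} Apply vector-valued holomorphic projection to these brackets. The nonholomorphic part of $\mathcal{H}$ paired with $\theta$ gives the standard correction computed by Mertens. In each component, its projection contributes $\sum_{ab=N} \min(a,b)^k$, restricted by which components produced it. Concretely, the terms come from $m$ and $n$ with $m^2 - n^2 = N$ (or $2N$) and $m, n$ in the appropriate cosets, i.e.\ $a = m - n$ and $b = m + n$. Both residues in the same coset mod $2$ give $a \equiv b$, and mixed residues give $a \not\equiv b$, hence $\lambda^{\mathrm{even}}_k$ versus $\lambda^{\mathrm{odd}}_k$. The output is that the left side minus $\lambda_k^{\mathrm{even/odd}}$ is the coefficient of a holomorphic vector-valued modular form of weight $4$ (resp.\ $6$). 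This form is in fact a cusp form, since the constant terms cancel by construction, which should be checked on the Eisenstein part.

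\emph{Main obstacle: vanishing of the cusp form spaces.} It remains to show that the spaces of vector-valued cusp forms of weight~$4$ and weight~$6$ for these Weil representations are zero. A scalar argument would fail here: the scalar pullbacks live on $\Gamma_0(4)$ or $\Gamma_0(8)$, which carry cusp forms. The plan is to decompose the induced representation and identify the vector-valued space with a specific piece of new and old forms of level $4$ (resp.\ $8$) cut out by Atkin--Lehner and twist conditions. We then check, via dimension formulas or LMFDB tables, that those pieces vanish: weight-$4$ level-$4$ has no newforms, and the weight-$6$ level-$8$ newform has the wrong Atkin--Lehner sign or lies outside the image. This representation-theoretic matching is the step we expect to be hardest. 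If a nonzero cusp form did survive, one would instead have to choose the component combination (which is why the $N \mapsto 2N$ restriction appears in weight~$6$) so as to project it away.
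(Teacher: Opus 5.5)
Your weight-$4$ argument is, in outline, the paper's. You pair the vector-valued Zagier series of type $\ov\rhoH$ with the vector-valued theta series of type $\rhoH$, take the first tensor Rankin--Cohen bracket, project holomorphically, and use $\rmS_4(\ov\rhoH\otimes\rhoH)=0$. The paper gets this vanishing from $\ov\rhoH\otimes\rhoH\cong\bbone\oplus\genInd_{\Ga_0(4)}^{\SL{2}(\ZZ)}\bbone$ and $\rmS_4(\Ga_0(4))=0$.

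The weight-$6$ part has two errors. First, the $8N$ does not come from a lattice with form $2x^2$. Since $8N-4m^2=4(2N-m^2)$, with $m$ running over \emph{all} integers with $m^2\le 2N$, the last two recursions come from exactly the same pairing as the first two: the same $\theta$, the same $4$-dimensional type, and the same first and fourth components. The only changes are $\nu=2$ and reading off the index $2N$. Pairing with $\theta(2\tau)$ would instead produce class numbers $H(4M-8m^2)$. Its shadow contribution would be terms of the form $(\sqrt{2}\,|\tilde m|-|m|)^{5}$, not the divisor sums $\lambda_5^{\mathrm{even}}$, $\lambda_5^{\mathrm{odd}}$.

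Second, and this is the real missing idea, the relevant weight-$6$ cusp space does not vanish, so no Atkin--Lehner or level argument can finish the proof. In the correct setting $\rmS_6(\ov\rhoH\otimes\rhoH)$ is one-dimensional. It comes from $\rmS_6(\Ga_0(4))=\CC\,\eta(2\tau)^{12}$, with no contribution from $\Ga_0(2)$ or level one, and the first and fourth components of its generator are $\pm\eta(2\tau)^{12}$. You can see the obstruction directly: at index $1$ the odd weight-$5$ sum is $g_5^{\mathrm{odd}}(1,0)H(3)+g_5^{\mathrm{odd}}(1,-1)H(3)=\tfrac{2}{3}$, while $\lambda_5^{\mathrm{odd}}(1)=0$.

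What the paper uses instead of vanishing is that $\eta(2\tau)^{12}=q\prod_{n\ge1}(1-q^{2n})^{12}$ is supported on odd exponents. Hence the first and fourth components of every form in $\rmS_6(\ov\rhoH\otimes\rhoH)$ have zero Fourier coefficients at even exponents. That is why the weight-$5$ recursions hold only at even index $2N$ — not a choice of lattice or of component combination. Adding the first and fourth components does kill this cusp form, but that only recovers Cohen's undecoupled identity.

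Finally, when you expand the $\nu=2$ bracket you will find the even polynomial $-16m^4+12Mm^2-M^2$ at index $M$. At $M=2N$ this is $-16m^4+24Nm^2-4N^2$. So in the third recursion the coefficient is this expression itself, not the stated polynomial with $N$ replaced by $2N$; the literal reading already fails for $N=2$.
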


\fref{Theorem}{mainthm:hurwitz_class_number_recursions} is enabled by a vector-valued approach, while previous proofs of recursions of Hurwitz class numbers exclusively relied on scalar-valued forms.  A key advantage of the vector-valued setting is the availability of further vanishing results for spaces of cusp forms.

\begin{mainremarkenumerate}
\item
The functions in~\fref{Corollary}{cor:representation_theory:hurwitz_modular_forms_support}, \fref{Proposition}{prop:holomorphic_projection:hurwitz:holomorphic_part}, and \fref{Proposition}{prop:holomorphic_projection:hurwitz:nonholomorphic_part} take values in a~$4$-dimensional~$\CC$-vector space. \fref{Theorem}{mainthm:hurwitz_class_number_recursions} arises from inspecting their first and fourth components. Moreover, considering the sum of the first and fourth components allows one to recover Cohen's~\cite{cohen-1975} recursions~\eqref{eq:hurwitz_class_number_recursion_Cohen1} and~\eqref{eq:hurwitz_class_number_recursion_Cohen2}.  Finally, investigating the second and third components yield the formulas in Corollary~2 of~\cite{mertens-2014}. Observe that for weights larger than~$4$, these components no longer vanish, which explains the appearance of the theta function coefficients in~\cite{mertens-2014}.
\item
A vector-valued set-up was employed in~\cite{imamoglu-raum-richter-2014} to establish recursions for Fourier coefficients of Ramanujan's third order mock theta functions. The vector-valued approach in this paper can be extended to derive new recursions for Fourier coefficients of mock modular forms, including smallest parts functions and Ramanujan's mock theta functions. We will pursue this in a sequel.
\item
One interpretation of our recursions is that we restrict~$m \,\pmod{2}$. This is similar to the recursions in~\cite{bringmann-kane-2019}, where~$m$ was restricted modulo odd primes.
\end{mainremarkenumerate}

Besides our vector-valued framework, our main ingredients are holomorphic projection and Rankin--Cohen brackets. We apply these tools to the tensor product of a vector-valued version of Zagier's Eisenstein series and the vector-valued version of the classical Jacobi theta series. To describe their modular behavior we use a representation~$\rhoH$ of the metaplectic group~$\Mp{2}(\ZZ)$. A crucial structural insight is the decomposition of the tensor product representation~$\ov{\rhoH} \otimes \rhoH$ into irreducible components, which allows us to identify projected forms in explicit spaces of modular forms.  In particular, our formulas depend critically on the vanishing of certain spaces of vector-valued cusp forms.

This paper is organized as follows. We introduce vector-valued modular and Maass forms, Rankin--Cohen brackets, and holomorphic projection in \fref{Section}{sec:preliminaries}.
In \fref{Section}{sec:vector_valued_modular_forms}, we describe the spaces of vector-valued modular forms we work with, and derive the vanishing of the subspaces of cusp forms.
We calculate the Fourier coefficients that appear in the holomorphic projections we use in \fref{Section}{sec:holomorphic_projection_fourier_coefficients}.
Finally, we combine the results of \fref{Section}{sec:vector_valued_modular_forms} and \fref{Section}{sec:holomorphic_projection_fourier_coefficients} to prove \fref{Theorem}{mainthm:hurwitz_class_number_recursions} in \fref{Section}{sec:proof_main_theorem}.

\section{Preliminaries}%
\label{sec:preliminaries}
\subsection{The metaplectic modular group}
\label{ssec:preliminaries:metaplectic}

We introduce notation for vector-valued modular forms using standard conventions. For~$z \in \CC$, we set~$e(z) \defeq \exp(2 \pi i\, z)$. We set~$\zeta_r \defeq e(1 / r)$, and denote by~$\HS \defeq \{ \tau = x + i y \in \CC \condsep y > 0 \}$ the Poincaré upper half-plane. For any periodic function~$f$ on~$\HS$, we write~$c(f;\, n; y)$ for its~$n$\thdash{} Fourier coefficient,~$n \in \QQ$. When this coefficient does not depend on~$y$, we use the simplified notation~$c(f;\, n)$.

We denote by~$\Ga_0(N)$ the standard congruence subgroups of~$\SL{2}(\ZZ)$.

The metaplectic group~$\Mp{2}(\RR)$ is a double cover of~$\SL{2}(\RR)$ consisting of pairs~$(g, \om)$ where~$g = \begin{psmatrix} a & b \\ c & d \end{psmatrix} \in \SL{2}(\RR)$ and~$\om \defcol \HS \ra \CC$ is a holomorphic function satisfying~$\om(\tau)^2 = c \tau + d$. The multiplication rule is given by
\begin{gather*}
  (g_1, \omega_1)\, (g_2, \omega_2)
  =
  \big( g_1 g_2,\, (\omega_1 \circ g_2) \cdot \omega_2 \big)
  \tx{.}
\end{gather*}
We call the preimage of~$\SL{2}(\ZZ)$ under the projection~$\Mp{2}(\RR) \ra \SL{2}(\RR)$ the metaplectic modular group~$\Mp{2}(\ZZ)$. It is generated by
\begin{gather*}
  S
  \defeq
  \big( \begin{psmatrix} 0 & -1 \\ 1 & 0 \end{psmatrix},\, \sqrt{\tau} \big)
  \quad\text{and}\quad
  T
  \defeq
  \big( \begin{psmatrix} 1 & 1 \\ 0 & 1 \end{psmatrix},\, 1 \big)
  \tx{,}
\end{gather*}
where the square root denotes the principal branch satisfying~$\sqrt{i} = \zeta_8$.

\subsection{Modular forms and harmonic Maass forms}%
\label{ssec:preliminaries:harmonic_maass_forms}

We recall scalar- and vector-valued modular forms and harmonic Maass forms~\cite{bringmann-folsom-ono-rolen-2018}.

\paragraph{Classical modular forms and Maass forms}
For a subgroup~$\Ga \subseteq \SL{2}(\ZZ)$, we write~$\rmM_k(\Ga)$ for the space of modular forms of weight~$k$, which may be integral or half-integral, and~$\bbM_k(\Ga)$ for the space of harmonic Maass forms of that weight that are of moderate growth at all cusps. For spaces of modular forms and harmonic Maass forms for characters, we often omit that character and write~$\rmM_k(\Ga)$ and~$\bbM_k(\Ga)$ instead of~$\rmM_k(\Ga,\chi)$ and~$\bbM_k(\Ga,\chi)$. We also work with quasi-modular forms of integral weight~\cite{zagier-1994, kaneko-zagier-1995}.

\paragraph{Vector-valued modular forms}
We consider finite dimensional unitary representations~$\rho$ of the metaplectic group~$\Mp{2}(\ZZ)$, which we call (arithmetic) types. We denote by~$V(\rho)$ the underlying vector space carrying the representation~$\rho$, equipped with an inner product~$\langle \cdot\, ,\,\cdot \rangle_\rho$ that makes~$\rho$ unitary. When~$\rho(S)^2 = \id$ is the identity on~$V(\rho)$, the representation~$\rho$ descends to a representation of~$\SL{2}(\ZZ)$.

Given a half-integer weight~$k$, we define an action on functions~$f \defcol \HS \ra V(\rho)$ via the weight~$k$ slash operator of type~$\rho$. For any element~$(\ga,\om) \in \Mp{2}(\ZZ)$,~$\ga = \begin{psmatrix} a & b \\ c & d \end{psmatrix}$, this operator acts as:
\begin{gather*}
  \big( f \big|_{k, \rho}\, \ga \big)(\tau)
  \defeq
  \rho\big((\ga,\om)\big)^{-1}\,
  \om(\tau)^{-2k}\,
  f\big( \mfrac{a \tau + b}{c \tau + d} \big)
  \tx{.}
\end{gather*}

Let~$\rmM_{k}(\rho)$ and~$\rmS_{k}(\rho)$ be the spaces of modular forms and cusp forms of weight~$k$ and type~$\rho$, comprising those holomorphic functions~$\HS \rightarrow V(\rho)$ that remain invariant under the~$|_{k, \rho}$ action and satisfy appropriate growth conditions at the cusps. Likewise, $\bbM_{k}(\rho)$ denotes the corresponding space of vector-valued Maass forms.

In analogy with scalar-valued modular forms, a vector-valued modular form~$f \in \rmM_k(\rho)$ being bounded at infinity implies that its Fourier series expansion takes the form
\begin{gather*}
  f
  =
  \sum_{n \in \QQ}
  c(f;n)\, e(n \tau)
  \tx{,}
\end{gather*}
where the coefficients~$c(f;n)$ are elements of~$V(\rho)$.

\paragraph{Eichler integrals}
Let~$f(\tau) = \sum_{n \ge 0} c(f;n)\, e(n \tau)$ be a holomorphic modular form in~$\rmM_{2-k}(\Ga)$ or in~$\rmM_{2-k}(\rho)$, where~$k \ne 1$, $\Ga$ is a congruence subgroup, and~$\rho$ is a type. Its non-holomorphic Eichler integral, denoted by~$f^\ast(\tau)$, is defined by
\begin{gather}
  \label{eq:def:non-holomorphic-eichler-integral}
  \begin{aligned}
  f^\ast(\tau)
   & :=
  -(2 i)^{k-1}
  \int_{-\ov{\tau}}^{i\infty}
  \frac{\ov{f(-\ov{w})}}{(w + \tau)^k}
  \,\rmd\mspace{-2mu}w
  \\
   & \hphantom{:}=
  \frac{\ov{c(f,0)}}{1-k}\, y^{1-k}
  \,-\,
  (4 \pi)^{k-1}
  \sum_{n < 0}
  \ov{c(f, |n|)\, }|n|^{k-1}\,
  \Gamma(1-k,4 \pi |n| y) e(n\tau)
  \tx{,}
  \end{aligned}
\end{gather}
where in the formula~$\Ga$ here is the upper incomplete Gamma-function.

\subsection{Class number generating series}%
\label{ssec:preliminaries:class_number_generating_series}

Recall the theta function
\begin{gather}
  \label{eq:def:theta}
  \theta(\tau)
  \defeq
  \sum_{m \in \ZZ}
  e\big( m^2 \tau \big)
  \,\in\,
  \rmM_{\frac{1}{2}}(\Ga_0(4))
  \tx{.}
\end{gather}
Recall that~$H(D) = 0$ if~$D < 0$ or~$-D$ is \emph{not} a discriminant.
Zagier~\cite{zagier-1975} showed that the holomorphic generating series~$\sum_D H(D) e(D \tau)$ of Hurwitz class numbers admits the following modular completion:
\begin{gather}
  \label{eq:zagier-eisenstein-series}
  E_{\frac{3}{2}}
  \defeq
  E^+_{\frac{3}{2}}
  +
  E^-_{\frac{3}{2}}
  \,\in\,
  \bbM_{\frac{3}{2}}(\Ga_0(4))
  \tx{,}\quad
  E^-_{\frac{3}{2}}
  \defeq
  - \mfrac{1}{16 \pi} \theta^\ast
  \tx{,}
\end{gather}
where
\begin{gather*}
  E^+_{\frac{3}{2}}(\tau)
  \defeq
  \sum_{D = 0}^\infty H(D)\, e(D \tau)
  \tx{,}\quad
  E^-_{\frac{3}{2}}(\tau)
  =
  \mfrac{1}{8 \pi} y^{-\frac{1}{2}}
  +
  \mfrac{1}{8 \sqrt{\pi}}
  \sum_{m \in \ZZ \setminus \{0\}}
  \mspace{-6mu}
  |m|\,
  \Gamma\big( -\tfrac{1}{2}, 4 \pi m^2 y \big)\,
  e\big( -m^2 \tau \big)
  \tx{.}
\end{gather*}

There are vector-valued analogs (see~\cite{williams-2019b}) that transform with the representation~$\rhoH$ of~$\Mp{2}(\ZZ)$ given by
\begin{gather}
  \label{eq:def:representation:hurwitz}
  \rhoH(S)
  =
  \mfrac{1+e(\frac{3}{4})}{2}
  \begin{psmatrix}
    1 & 1 \\
    1 & -1
  \end{psmatrix}
  \tx{,}\quad
  \rhoH(T)
  =
  \begin{psmatrix}
    1 &  \\
    & e(\frac{1}{4})
  \end{psmatrix}
  \tx{.}
\end{gather}
These analogs are given by
\begin{gather*}
  \theta^{\rmv\rmv}(\tau)
  =
  \lT{\Big(
    \sum_{m \in 2\ZZ}
    e\big( \mfrac{m^2}{4} \tau \big)
    \tx{,}\,
    \sum_{m \in 1 + 2\ZZ}
    e\big( \mfrac{m^2}{4} \tau \big)
    \Big)}
  \,\in\,
  \rmM_{\frac{1}{2}}(\rhoH)
\end{gather*}
and
\begin{gather*}
  E^{\rmv\rmv}_{\frac{3}{2}}
  \defeq
  E^{\rmv\rmv\,+}_{\frac{3}{2}}
  +
  E^{\rmv\rmv\,-}_{\frac{3}{2}}
  \,\in\,
  \bbM_{\frac{3}{2}}(\ov\rhoH)
  \tx{,}\quad
  E^{\rmv\rmv\,-}_{\frac{3}{2}}
  \defeq
  - \mfrac{1}{16 \pi}
  \theta^{\rmv\rmv\,\ast}
  \tx{,}
\end{gather*}
where
\begin{align*}
  E^{\rmv\rmv\,+}_{\frac{3}{2}}
   & \defeq
  \lT{\Big(}
  \sum_{n \in 4 \ZZ}
  H(n)\, e\big( \mfrac{n}{4} \tau \big)
  ,\;
  \sum_{n \in 3 + 4\ZZ}
  \mspace{-6mu}
  H(n)\, e\big( \mfrac{n}{4} \tau \big)
  \Big)
  \tx{,}
  \\
  E^{\rmv\rmv\,-}_{\frac{3}{2}}
   & \hphantom{:}=
  \lT{\Big(}
  \mfrac{1}{4 \pi} y^{-\frac{1}{2}}
  +
  \mfrac{1}{8 \sqrt{\pi}}
  \sum_{m \in 2\ZZ \setminus \{0\}}
  \mspace{-6mu}
  |m|\,
  \Gamma\big( -\tfrac{1}{2}, \pi m^2 y \big)\,
  e\big( -\tfrac{m^2}{4} \tau \big)
  ,\;
  \mfrac{1}{8 \sqrt{\pi}}
  \sum_{m \in 1 + 2\ZZ}
  \mspace{-6mu}
  |m|\,
  \Gamma\big( -\tfrac{1}{2}, \pi m^2 y \big)\,
  e\big( -\tfrac{m^2}{4} \tau \big)
  \Big)
  \tx{.}
\end{align*}

\subsection{Rankin--Cohen brackets}%
\label{ssec:preliminaries:rankin_cohen}

Given two modular forms, or a modular form and a harmonic weak Maass form, one can construct new modular forms from them using the Rankin--Cohen bracket:
\begin{definition}
\label{def:rankin-cohen}
Let~$f,g \defcol \HS \rightarrow \CC$ be real-analytic of weights~$k$ and~$\ell$, respectively. The \emph{$\nu$\thdash{} Rankin--Cohen bracket} of~$f$ and~$g$ is defined by
\begin{gather}
  \label{eq:def:rankin-cohen}
  \big[ f,g \big]_\nu
  \defeq
  \sum_{\mu=0}^\nu(-1)^\mu
  \mbinom{k+\nu-1}{\nu-\mu}
  \mbinom{l+\nu-1}{\mu}\,
  f^{(\mu)}\,
  g^{(\nu-\mu)}
  \tx{,}
\end{gather}
where~$f^{(\mu)} = \frac{1}{(2\pi i)^\mu}\partial_\tau^{\mu}f$ is the~$\mu$\thdash{} normalized derivative of~$f$ (and similarly for~$g$).
\end{definition}
Note that in particular,~$[f,g]_0 = fg$. As established by Rankin and Cohen~\cite{rankin-1956, cohen-1975}, if~$f$ and~$g$ are modular forms of weight~$k$ and~$\ell$, then~$[f,g]_\nu$ is a modular form of weight~$k+\ell+2\nu$. Furthermore, by Corollary~7.2 of~\cite{cohen-1975},~$[f,g]_\nu$ is a cusp form for~$\nu \geq 1$.

\subsection{Holomorphic projection}%
\label{ssec:preliminaries:holomorphic_projection}

We recall the following definition from~\cite{imamoglu-raum-richter-2014}:

\begin{definition}
\label{def:holomorphic_projection}
Let~$V$ be a finite-dimensional~$\CC$-vector space, and let~$f \defcol \HS \rightarrow V$ be continuous with Fourier expansion
\begin{gather*}
  f(\tau)
  =
  \sum_{m \in \frac{1}{N}\ZZ}
  c(f;m;y)\, e(m\tau)
\end{gather*}
for some~$N \in \ZZ_{> 0}$. Furthermore, assume that
\begin{enumerateroman}
\item
\label{it:def:holomorphic_projection:constant}
$f(\tau) = c_0 + \cO(y^{-\epsilon})$ as~$y \rightarrow \infty$ for some~$\epsilon > 0$ and~$c_0 \in V$, and
\item
\label{it:def:holomorphic_projection:growth}
for all~$m > 0$, $c(f; m; y) = \cO(y^{1-k+\epsilon})$ as~$y \rightarrow 0$ for some~$\epsilon > 0$.
\end{enumerateroman}
Then the holomorphic projection of~$f$ is defined by
\begin{gather}
  \label{eq:def:holomorphic_projection}
  \pi^\hol_{k}(f)
  \defeq
  c_0 + \sum_{0 < m \in \frac{1}{N}\ZZ}
  c(m)\, e(m \tau)
  \quad\tx{with}\quad
  c(m)
  \defeq
  \mfrac{(4\pi m)^{k-1}}{\Gamma(k-1)}\,
  \lim_{s \ra 0}\,
  \int\limits_0^\infty c(f; m; y)\, e^{-4\pi m y} y^{s+k-2} \,\intrmd y
  \tx{,}
\end{gather}
where~$\Ga$ denotes the Gamma function.
\end{definition}
\fref{Assumptions}{it:def:holomorphic_projection:constant} and~\ref{it:def:holomorphic_projection:growth} ensure convergence of the integral in~\eqref{eq:def:holomorphic_projection}. Furthermore, Proposition~4 of~\cite{imamoglu-raum-richter-2014} shows that~$\pi^\hol_k(f) = f$ for any holomorphic function~$f$. Theorem~5 of the same reference, whose proof extends verbatim to the slightly weaker \fref{Assumption}{it:def:holomorphic_projection:growth} compared to what is assumed in~\cite{imamoglu-raum-richter-2014}, shows that~$\pi^\hol_k(F)$ is a modular form provided~$F$ is invariant under the slash operator and satisfies a certain growth condition. In particular, this holds for the Rankin--Cohen brackets we will consider.

\section{Vector-valued modular forms}%
\label{sec:vector_valued_modular_forms}

Throughout, we work with matrix representations; that is, we fix a basis for each representation space we consider.

\subsection{Induced representations and modular forms}%
\label{ssec:vector_valued_modular_forms:induced_representations}

To define inductions, it is more convenient to view a representation~$\rho$ of a congruence subgroup~$\Ga$ as a right~$\CC[\Ga]$-module.
In an abuse of notation, we write~$\rho$ to stand for this~$\CC[\Ga]$-module.
Furthermore, to enable explicit calculations in \fref{Section}{ssec:vector_valued_modular_forms:induced_representation_ga04}, we will usually fix a basis of the underlying representation space and view all representations as matrix representations. This also allows us to identify vector-valued modular forms with vectors of modular forms.

\begin{definition}
\label{def:induced_representation}
Let~$\Ga\subseteq\SL{2}(\ZZ)$ be a congruence subgroup and~$\rho$ a representation of~$\Ga$. The~\emph{induced representation} is the representation of~$\SL{2}(\ZZ)$ given by
\begin{gather*}
  \Ind_{\Ga}^{\SL{2}(\ZZ)}\,\rho
  \defeq
  \rho
  \otimes_{\CC[\Ga]}
  \CC[\SL{2}(\ZZ)]
  \tx{.}
\end{gather*}
\end{definition}
The tensor product encodes the fact that we do not need to consider all matrices in~$\SL{2}(\ZZ)$, but only those in different equivalence classes modulo~$\Ga$. Since~$\Ga$ is a congruence subgroup, there are finitely many such classes, and hence
$\Ind_{\Ga}^{\SL{2}(\ZZ)}\,\rho$
is finite-dimensional if~$\rho$ is. In particular, if~$\rho$ is a character, then
$\Ind_{\Ga}^{\SL{2}(\ZZ)}\,\rho$
is a representation of dimension~$[\SL{2}(\ZZ):\Ga]$, and we can choose a basis~$\frake_\ga$ for a set of representatives~$\ga$ of~$\Ga \backslash \SL{2}(\ZZ)$.

Vector-valued modular forms corresponding to a representation induced by a character are closely related to scalar-valued modular forms for that character.
More precisely, suppose~$f\in \rmM_k(\Ga,\chi)$ for a congruence subgroup~$\Ga\subseteq\SL{2}(\ZZ)$ of index~$n$ and a character~$\chi$ with finite index kernel.
Let~$\ga_1,\ldots,\ga_n\in\SL{2}(\ZZ)$ be representatives for the right cosets of~$\Ga$ in~$\SL{2}(\ZZ)$, with~$\ga_1$ being the identity. The order of these representatives allows us to identify functions
\begin{gather*}
  \tau \mto \sum_{i=1}^n f_i(\tau)\, \frake_{\ga_i}
  \quad\tx{with}\quad
  \begin{psmatrix}
    f_1 \\
    \vdotssmall \\
    f_n
  \end{psmatrix}
  \tx{.}
\end{gather*}

\begin{proposition}
\label{prop:vector_valued_modular_forms:induced_representations}
We have an isomorphism
\begin{gather*}
  \Ind
  \defcol
  \rmM_k(\Ga,\chi)
  \longrightarrow
  \rmM_k\bigl( \Ind_{\Ga}^{\SL{2}(\ZZ)}\chi \bigr)
  \tx{,}
  \quad
  f
  \lmto
  \begin{psmatrix}
    f|_k \ga_1  \\
    \vdotssmall \\
    f|_k \ga_n
  \end{psmatrix}
  \tx{.}
\end{gather*}
\end{proposition}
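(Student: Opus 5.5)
The plan is to check, in order, that the map $\Ind$ is well defined (its image consists of vector-valued modular forms of the stated type), that it is injective, and that it is surjective by constructing an explicit inverse $F \mapsto F_1$, the first component. Throughout I use the basis $\frake_{\ga_i} = 1 \otimes \ga_i$ of $\Ind_{\Ga}^{\SL{2}(\ZZ)}\chi = \chi \otimes_{\CC[\Ga]} \CC[\SL{2}(\ZZ)]$. Right multiplication by $g \in \SL{2}(\ZZ)$ acts on this basis by
\[
  \frake_{\ga_i}\, g = \chi(\ga_i g \ga_{j}^{-1})\, \frake_{\ga_j},
\]
where $j = j(i,g)$ is the unique index with $\ga_i g \in \Ga \ga_j$. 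Thus the matrix of $g$ is a monomial matrix whose nonzero entries are character values. For integral $k$ this is the whole setup. For half-integral $k$ one works in the corresponding cover, and the automorphy factors $\om$ are absorbed into the slash operator $|_k$; the same bookkeeping goes through, so I treat the cases uniformly.

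\emph{Well-definedness.} Let $F = \sum_i (f|_k \ga_i)\, \frake_{\ga_i}$ and take $g \in \SL{2}(\ZZ)$. Write $\ga_i g = \delta_i \ga_{j(i)}$ with $\delta_i \in \Ga$. Then
\[
  (f|_k\ga_i)|_k g = f|_k \delta_i \ga_{j(i)} = \chi(\delta_i)\,(f|_k \ga_{j(i)}),
\]
up to the paper's convention of $\chi$ versus $\ov\chi$ in $f|_k\delta = \chi(\delta) f$. Since $i \mapsto j(i)$ is a permutation, $F|_k g$ equals $F$ with its components permuted and multiplied by character values. This is exactly the effect of $\rho(g)$ in the monomial description above. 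Hence $F|_{k,\rho} g = F$, where the inverse $\rho(g)^{-1}$ in the slash operator matches the right-module convention.

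This is the step I expect to be the main obstacle. It involves no real mathematics, but one must make the conventions consistent: the right module versus the left action, the $\rho(\ga)^{-1}$ in $|_{k,\rho}$, and whether $\chi$ or $\ov\chi$ appears. If a mismatch appears, I would fix it by using the basis dual to $\frake_{\ga_i}$, or by replacing $\ga_i$ with $\ga_i^{-1}$ as coset representatives, so that the identity holds on the nose.

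\emph{Growth and holomorphy.} Each $f|_k\ga_i$ is holomorphic. Because $f$ is a modular form for $\Ga$, it is bounded at every cusp, which is precisely boundedness of each $f|_k\ga_i$ at $i\infty$. So $F$ satisfies the growth condition defining $\rmM_k(\rho)$.

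\emph{Inverse map.} Given $F = (F_1, \dots, F_n)^{\rmt} \in \rmM_k(\Ind\chi)$, apply the invariance $F|_{k,\rho}\delta = F$ for $\delta \in \Ga$. Since $\ga_1 = \id$, we have $\ga_1 \delta = \delta\, \ga_1$, so the first component satisfies $F_1|_k\delta = \chi(\delta) F_1$. Next apply invariance under $\ga_i$: because $\ga_1\ga_i = \ga_i$, the $i$\thdash{}th component is forced to be $F_i = F_1|_k\ga_i$. Consequently $F_1 \in \rmM_k(\Ga,\chi)$, with holomorphy at all cusps coming from boundedness of all components, and $F = \Ind(F_1)$. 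Conversely, the first component of $\Ind f$ is $f|_k\ga_1 = f$. The two maps are therefore mutually inverse and linear, which proves that $\Ind$ is an isomorphism.
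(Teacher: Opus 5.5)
Your proposal is correct and follows essentially the same route as the paper. Modularity of $\Ind(f)$ comes from the coset permutation $\ga_i\ga = \ga_i'\ga_{\sigma(i)}$ with character factors $\chi(\ga_i')$, the growth condition from boundedness of $f$ at all cusps, and the inverse is projection onto the first component. You go slightly further than the paper by checking that $F_1 \in \rmM_k(\Ga,\chi)$ and $F_i = F_1|_k\ga_i$ for an arbitrary $F$, which the paper only asserts; the $\chi$ versus $\ov\chi$ convention you flag is also left implicit in the paper and is harmless.
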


\begin{remark}
Without loss of generality, we choose~$\ga_1$ the identity, so that the inverse of~$\Ind$ is given by projecting onto the first component.
\end{remark}

\begin{proof}
This is analogous to Proposition~1.5 of~\cite{raum-2017}, but for convenience of the reader, we recall the details:
It is easy to see that~$\Ind$ is a linear map since~$|_k$ is.
Furthermore, $\Ind(f)$ is bounded at infinity because~$f$ is bounded at all of its cusps.
The modularity of~$\Ind(f)$ follows from the definition of induction. For ease of notation, we set~$\frake_i = \frake_{\ga_i}$. More precisely,
\begin{align*}
      &
  \Ind(f) \big|_k\, \ga
  =
  \Big(
  \sum_{i = 1}^n
  \big(f|_k\, \ga_i \big)\,
  \frake_i
  \Big)
  \Big|_k\, \ga
  =
  \sum_{i = 1}^n
  \big(f|_k\, \ga_i \ga \big)\,
  \frake_i
  =
  \sum_{i = 1}^n
  \big(f|_k\, \ga_i'\ga_{\sigma(i)} \big)\,
  \frake_i
  =
  \sum_{i = 1}^n
  \big(f|_k\, \ga_{\sigma(i)} \big)\,
  \chi(\ga_i')
  \frake_i
  \\
  ={} &
  \sum_{i = 1}^n
  \big(f|_k\, \ga_i \big)\,
  \chi(\ga_{\sigma^{-1}(i)}')
  \frake_{\sigma^{-1}(i)}
  =
  \sum_{i = 1}^n
  \big(f|_k\, \ga_i \big)\,
  \rho(\ga) \frake_i
  =
  \rho(\ga)\,
  \Ind(f)
\end{align*}
for a permutation~$\sigma \in \Sigma_n$ and~$\ga_i'=\ga\ga_i\ga_{\sigma(i)}^{-1}\in\Ga$.
Hence~$\Ind$ is a vector space homomorphism from~$\rmM_k(\Ga,\chi)$
to~$\rmM_k\bigl( \Ind_{\Ga}^{\SL{2}(\ZZ)}\chi \bigr)$.
Finally, the inverse of~$\Ind$ is given by the projection to the first component, confirming that~$\Ind$ is an isomorphism.
\end{proof}

The proof of our main theorem features a quotient of an induced representation, which discards contributions from larger groups than~$\Ga$. Specifically, note that if~$\chi$ extends to a character~$\chi'$ of~$\Ga' \subseteq \SL{2}(\ZZ)$ that properly contains~$\Ga$, by induction in steps we have a natural, proper inclusion
\begin{gather}
  \label{eq:representation_theory:hurwitz:ind_inclusion}
  \iota_{\Ga',\chi'}
  \defcol
  \Ind_{\Ga'}^{\SL{2}(\ZZ)}\, \chi'
  \lhra
  \Ind_{\Ga}^{\SL{2}(\ZZ)}\, \chi
  \tx{.}
\end{gather}
Since~$\Ga$ has finite index in~$\SL{2}(\ZZ)$ there are finitely many such~$\chi'$.

\begin{definition}
\label{def:genuine_induction}
Let~$\Ga \subseteq \SL{2}(\ZZ)$ be a congruence subgroup and~$\chi$ a character on~$\Ga$. The~\emph{genuine induction} (or~\emph{genuine quotient} of the induction) of~$\chi$ with respect to~$\Ga$ is
\begin{gather}
  \label{eq:def:genuine_induction}
  \genInd_{\Ga}^{\SL{2}(\ZZ)}\, \chi
  \defeq
  \Ind_{\Ga}^{\SL{2}(\ZZ)}\, \chi
  \mathbin{\Bigg\slash}
  \sum_{\substack{
      \Ga \subsetneq \Ga' \subseteq \SL{2}(\ZZ) \\
      \chi' \tx{ extends } \chi \tx{ to } \Ga'
    }}
  \mspace{-12mu}
  \Ind_{\Ga'}^{\SL{2}(\ZZ)}\, \chi'
  \tx{.}
\end{gather}
\end{definition}

\begin{remark}
The sum on the right hand side of~\eqref{eq:def:genuine_induction} is not a direct sum in general. For instance, if~$\Ga \subsetneq \Ga' \subsetneq \Ga''$ are subgroups,~$\chi''$ extends~$\chi$ to~$\Ga''$, and~$\chi'$ is its restriction to~$\Ga'$, then
\begin{gather*}
  \Ind_{\Ga''}^{\SL{2}(\ZZ)}\, \chi''
  \subsetneq
  \Ind_{\Ga'}^{\SL{2}(\ZZ)}\, \chi'
  \tx{,}
  \quad\tx{i.e., }\quad
  \Ind_{\Ga''}^{\SL{2}(\ZZ)}\, \chi''
  +
  \Ind_{\Ga'}^{\SL{2}(\ZZ)}\, \chi'
  =
  \Ind_{\Ga'}^{\SL{2}(\ZZ)}\, \chi'
  \tx{.}
\end{gather*}
\end{remark}

An analog of \fref{Proposition}{prop:vector_valued_modular_forms:induced_representations}
holds for the genuine induction, allowing us to describe the vector-valued modular forms arising from it.

The inclusion in~\eqref{eq:representation_theory:hurwitz:ind_inclusion} induces an injective map of modular forms via composition:
\begin{gather*}
  \rmM_k\big( \Ind_{\Ga'}^{\SL{2}(\ZZ)}\, \chi \big)
  \lhra
  \rmM_k\big( \Ind_{\Ga}^{\SL{2}(\ZZ)}\, \chi \big)
  \tx{,}\quad
  f
  \lmto
  \iota_{\Ga',\chi'} \circ f
  \tx{.}
\end{gather*}

To make this more explicit, we choose bases for each of the inductions~$\Ind_{\Ga'}^{\SL{2}(\ZZ)}\, \chi$ that are subsets of~$\ga_1,\ldots,\ga_n$.
Then they are Hermitian with respect to the standard dot product.
We also choose a basis~$\delta_1,\ldots,\delta_s$ of~$\genInd_{\Ga}^{\SL{2}(\ZZ)}\, \chi$ that consists of images of~$\ga_1,\ldots,\ga_n$ in the quotient~\eqref{eq:def:genuine_induction}.
The homomorphisms~$\iota_{\Ga',\chi'}$ are represented by matrices~$A(\Ga',\chi')$ with respect to these bases.
Now, the columns of~$A(\Ga',\chi')$ for all~$\Ga'$ and~$\chi'$ span a subspace~$W$ of~$\Ind_\Ga^{\SL{2}(\ZZ)}\, \chi$.
Let~$v_1,\ldots,v_s \in W^\perp$ be preimages of~$\delta_j$ under the quotient map~$\pi$ that arises from~\eqref{eq:def:genuine_induction}.
They form a basis for~$W^\perp$, since their images are a basis of the quotient space.
We use this basis to construct the desired isomorphism.

\begin{proposition}
\label{prop:vector_valued_modular_forms:genuine_induced_representations}
Let~$A$ be the~$s\times n$ matrix whose~$j$\thdash{} row comprises the entries of~$v_j$.
We have the isomorphism
\begin{align}
  \label{eq:prop:vector_valued_modular_forms:genuine_induced_representations}
  \genInd
  \defcol
  \rmM_k(\Ga, \chi)
  \mathbin{\Bigg\slash}
  \sum_{\substack{
  \Ga \subsetneq \Ga' \subseteq \SL{2}(\ZZ) \\
      \chi' \tx{ extends } \chi \tx{ to } \Ga'
    }}
  \mspace{-12mu}
  \rmM_k(\Ga', \chi)
  \lra
  \rmM_k\big( \genInd_\Ga^{\SL{2}(\ZZ)}\, \chi \big)
  \tx{,}\quad
  f
  \lmto
  A \, \Ind(f)
  \tx{.}
\end{align}
\end{proposition}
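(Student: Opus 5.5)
The idea is to derive the statement from \fref{Proposition}{prop:vector_valued_modular_forms:induced_representations} by passing to quotients on both sides, using the orthogonal splitting $\Ind_\Ga^{\SL{2}(\ZZ)}\chi = W \oplus W^\perp$.

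First I would observe that $W$ is an $\SL{2}(\ZZ)$-subrepresentation, since it is the sum of the images of the equivariant maps $\iota_{\Ga',\chi'}$. Because the induced representation is unitary for the standard inner product (it permutes the basis $\frake_{\ga_i}$ up to roots of unity), $W^\perp$ is also invariant. The quotient map $\pi$ then restricts to an isomorphism of representations $W^\perp \cong \genInd_\Ga^{\SL{2}(\ZZ)}\chi$. By construction, $v_j \mapsto \delta_j$ under this isomorphism.

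Next I would identify $A$ with orthogonal projection onto $W^\perp$ expressed in the coordinates $\delta_j$. After orthonormalizing the $v_j$ if necessary (which changes $A$ only by an invertible matrix, harmless after a change of basis in the target), the map $v \mapsto A v$ kills $W$ and is an equivariant isomorphism $W^\perp \to \genInd_\Ga^{\SL{2}(\ZZ)}\chi$. Hence $A$ intertwines $\Ind_\Ga^{\SL{2}(\ZZ)}\chi$ with $\genInd_\Ga^{\SL{2}(\ZZ)}\chi$. Composing with $A$ therefore maps $\rmM_k(\Ind\chi)$ to $\rmM_k(\genInd\chi)$, and holomorphy and boundedness at infinity are preserved by a constant linear map. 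Since $\rmM_k$ is exact on a direct sum decomposition of the type, this yields
\[
  \rmM_k(\Ind\chi) = \rmM_k(W) \oplus \rmM_k(W^\perp),
\]
so $f \mapsto A\,\Ind(f)$ is surjective onto $\rmM_k(\genInd\chi)$, with kernel $\Ind^{-1}\rmM_k(W)$.

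It remains to identify this kernel with $\sum \rmM_k(\Ga',\chi')$. By \fref{Proposition}{prop:vector_valued_modular_forms:induced_representations} applied to $\Ga'$, we have $\rmM_k(\Ind_{\Ga'}\chi') \cong \rmM_k(\Ga',\chi')$. The embedding $\iota_{\Ga',\chi'}$ corresponds to the inclusion $\rmM_k(\Ga',\chi') \subseteq \rmM_k(\Ga,\chi)$: the components $f|_k\ga_i$ for $\ga_i$ in one $\Ga'$-coset agree up to $\chi'$-factors. So $\Ind$ carries $\rmM_k(\Ga',\chi')$ onto $\rmM_k(\iota\,\Ind_{\Ga'}\chi')$, and
\[
  \rmM_k(W) \supseteq \sum \Ind\big(\rmM_k(\Ga',\chi')\big).
\]

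The main obstacle is the reverse inclusion. We need $\rmM_k$ of a sum of subrepresentations to equal the sum of their $\rmM_k$'s, even though the sum is not direct. I would handle this with complete reducibility: decompose $W$ into isotypic pieces and project onto each summand using equivariant projections built from unitarity. The finite filtration of sums $W_1 + \dots + W_r$ then splits as representations, giving surjections $\bigoplus \rmM_k(W_i) \to \rmM_k(W)$. Concretely, $\bigoplus_i W_i \to W$ is a surjection of unitary representations and so admits an equivariant section $W \to \bigoplus_i W_i$. Composing a form valued in $W$ with this section writes it as a sum of forms valued in the $W_i$, which proves surjectivity and completes the isomorphism.
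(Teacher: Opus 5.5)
Your proof is correct and follows the same route as the paper: left multiplication by $A$ is an intertwiner from $\Ind_\Ga^{\SL{2}(\ZZ)}\chi$ onto $\genInd_\Ga^{\SL{2}(\ZZ)}\chi$ with kernel $W$, one passes to spaces of modular forms, and then one invokes \fref{Proposition}{prop:vector_valued_modular_forms:induced_representations}. The paper simply asserts the resulting isomorphism on modular forms, whereas you justify it: surjectivity comes from the splitting $\rmM_k(W)\oplus\rmM_k(W^\perp)$, and $\rmM_k(W)=\sum_i\rmM_k(W_i)$ for the non-direct sum comes from an equivariant section of $\bigoplus_i W_i\to W$; your orthonormalization remark also makes explicit a normalization of the $v_j$ that the paper leaves implicit.
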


\begin{proof}
Left-multiplication by~$A$ gives a map between representations
\begin{gather*}
  \Ind_\Ga^{\SL{2}(\ZZ)}\, \chi
  \ra
  \genInd_\Ga^{\SL{2}(\ZZ)}\, \chi
\end{gather*}
with kernel
\begin{gather*}
  \sum_{\substack{
      \Ga \subsetneq \Ga' \subseteq \SL{2}(\ZZ) \\
      \chi' \tx{ extends } \chi \tx{ to } \Ga'
    }}
  \mspace{-12mu}
  \Ind_{\Ga'}^{\SL{2}(\ZZ)}\,\chi
  \subseteq
  \Ind_{\Ga}^{\SL{2}(\ZZ)}\,\chi
\end{gather*}
by induction in steps.
Hence we have the isomorphism
\begin{gather*}
  \rmM_k
  \big(
  \Ind_\Ga^{\SL{2}(\ZZ)}\, \chi
  \big)
  \mathbin{\Bigg\slash}
  \sum_{\substack{
      \Ga \subsetneq \Ga' \subseteq \SL{2}(\ZZ) \\
      \chi' \tx{ extends } \chi \tx{ to } \Ga'
    }}
  \mspace{-12mu}
  \rmM_k
  \big(
  \Ind_{\Ga'}^{\SL{2}(\ZZ)}\, \chi
  \big)
  \lra
  \rmM_k\big( \genInd_\Ga^{\SL{2}(\ZZ)}\, \chi \big)
\end{gather*}
also given by left-multiplication by~$A$.
Applying \fref{Proposition}{prop:vector_valued_modular_forms:induced_representations}
gives the claim.
\end{proof}

\begin{remark}
The quotient in \fref{Proposition}{prop:vector_valued_modular_forms:genuine_induced_representations} is reminiscent of newforms, which can be viewed as the space of modular forms quotiented by the subspace of oldforms.
However, on the left hand side of~\eqref{eq:prop:vector_valued_modular_forms:genuine_induced_representations} we quotient out the subspace of modular forms for larger groups as opposed to the subspace of all oldforms.
For instance, in the case of the trivial representation of~$\Ga_0(p)$ for a prime~$p$, a rescaled level~$1$ form~$f(p \tau)$ contributes to the modular forms of the irreducible quotient, while~$f(\tau)$ does not.
\end{remark}

\subsection{Tensor products of modular forms}%
\label{ssec:vector_valued_modular_forms:tensor_products}

The tensor product is the vector-valued analog of multiplication of scalar-valued modular forms:
\begin{definition}
\label{def:vector_valued_modular_forms:tensor_products}
Let~$f = (f_i)_{1 \leq i \leq m}$ be invariant for~$|_{k,\rho_1}$ and~$g = (g_j)_{1 \leq j \leq n}$ be invariant for~$|_{l,\rho_2}$, for half-integers~$k$ and~$l$.
The~\emph{tensor product of~$f$ and~$g$} is their tensor product as vectors.
In other words, and in a slight abuse of notation,
\begin{gather*}
  f \otimes g
  =
  \bigl( f_i \cdot g_j \bigr)_{\substack{1\leq i\leq m \\ 1\leq j\leq n}}
  \tx{.}
\end{gather*}
\end{definition}

As in the scalar-valued case, the tensor product preserves modularity:

\begin{proposition}
With~$f$ and~$g$ as in \fref{Definition}{def:vector_valued_modular_forms:tensor_products},~$f \otimes g$ is invariant for~$|_{k + l, \rho_1 \otimes \rho_2}$, where~$\rho_1 \otimes \rho_2$ is the tensor product of~$\rho_1$ and~$\rho_2$.
\end{proposition}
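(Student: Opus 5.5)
We verify the transformation law directly from the definition of the slash operator, one generator-free element $(\ga,\om) \in \Mp{2}(\ZZ)$ at a time. By hypothesis, for every such $(\ga,\om)$ with $\ga = \begin{psmatrix} a & b \\ c & d \end{psmatrix}$ we have
\begin{gather*}
  f\big( \mfrac{a \tau + b}{c \tau + d} \big)
  =
  \om(\tau)^{2k}\, \rho_1\big((\ga,\om)\big)\, f(\tau)
  \tx{,}\quad
  g\big( \mfrac{a \tau + b}{c \tau + d} \big)
  =
  \om(\tau)^{2l}\, \rho_2\big((\ga,\om)\big)\, g(\tau)
  \tx{,}
\end{gather*}
which is just a rewriting of $f|_{k,\rho_1}(\ga,\om) = f$ and $g|_{l,\rho_2}(\ga,\om) = g$, using that $\rho_i$ are representations so the inverse can be moved to the other side.

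The key step is then the elementary identity for tensor products of vectors and matrices: for matrices $A$, $B$ and vectors $v$, $w$ one has $(Av) \otimes (Bw) = (A \otimes B)(v \otimes w)$, where $A \otimes B$ is the Kronecker product, which is precisely the matrix of $\rho_1 \otimes \rho_2$ in the product basis $e_i \otimes e_j$ indexing the components $f_i g_j$ of Definition~\ref{def:vector_valued_modular_forms:tensor_products}. Since the scalars $\om(\tau)^{2k}$ and $\om(\tau)^{2l}$ factor out of the bilinear tensor product, we obtain
\begin{gather*}
  (f \otimes g)\big( \mfrac{a \tau + b}{c \tau + d} \big)
  =
  \om(\tau)^{2(k+l)}\,
  \big( \rho_1 \otimes \rho_2 \big)\big((\ga,\om)\big)\,
  (f \otimes g)(\tau)
  \tx{,}
\end{gather*}
and multiplying by $(\rho_1\otimes\rho_2)((\ga,\om))^{-1} \om(\tau)^{-2(k+l)}$ gives $(f\otimes g)|_{k+l,\rho_1\otimes\rho_2}(\ga,\om) = f \otimes g$.

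The only point requiring a moment's care (hardly an obstacle) is bookkeeping: one must check that $\rho_1\otimes\rho_2$, defined by $(\rho_1\otimes\rho_2)(x) = \rho_1(x)\otimes\rho_2(x)$, is again a representation of $\Mp{2}(\ZZ)$ (the mixed-product property $(A\otimes B)(C\otimes D) = AC\otimes BD$) and that the weight factors combine as $\om^{-2k}\om^{-2l} = \om^{-2(k+l)}$, which holds because both are integer powers of the same function $\om$, so no branch issues arise even for half-integral $k$, $l$. Unitarity of $\rho_1 \otimes \rho_2$ with respect to the product inner product follows likewise, so it is a type in the sense of Section~\ref{ssec:preliminaries:harmonic_maass_forms}.
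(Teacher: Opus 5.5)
Your proposal is correct and is essentially the paper's argument: the paper just cites the compatibility of $\otimes$ with the scalar slash action and with $\rho_1$, $\rho_2$. You make that explicit through the Kronecker identity $(Av)\otimes(Bw)=(A\otimes B)(v\otimes w)$ and the factorization $\omega^{2k}\omega^{2l}=\omega^{2(k+l)}$, checked for an arbitrary element of $\mathrm{Mp}_2(\mathbb{Z})$.
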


\begin{proof}
This follows from the compatibility of the definition of~$\otimes$ between slash-invariant functions and representations, using that the scalar-valued slash action and the action of~$\rho_1$ and~$\rho_2$ intertwine.
\end{proof}

Given that the tensor product is a form of multiplication for vector-valued functions, it is natural to extend \fref{Definition}{def:rankin-cohen}:
\begin{definition}
\label{def:vector_valued_modular_forms:tensor_rankin-cohen}
Let~$f$ and~$g$ be as in \fref{Definition}{def:vector_valued_modular_forms:tensor_products}.
The~\emph{tensor Rankin--Cohen bracket} of~$f$ and~$g$, denoted~$[f,g]_\nu^\otimes$, is given by applying the Rankin--Cohen bracket from \fref{Definition}{def:rankin-cohen} to~$f \otimes g$ elementwise.
Equivalently,
\begin{gather}
  \label{eq:def:vector_valued_modular_forms:tensor_rankin-cohen}
  \big[ f,g \big]_\nu^\otimes
  \defeq
  \sum_{\mu=0}^\nu(-1)^\mu
  \mbinom{k+\nu-1}{\nu-\mu}
  \mbinom{l+\nu-1}{\mu}\,
  f^{(\mu)}
  \otimes
  g^{(\nu-\mu)}
  \tx{.}
\end{gather}
where~$f^{(\mu)}$ is as in \fref{Definition}{def:rankin-cohen}.
\end{definition}

\begin{remark}
The notion of vector-valued Rankin--Cohen brackets has also appeared in~\cite{choie-lee-2016}.
\end{remark}

The modularity of the function in
\eqref{eq:def:vector_valued_modular_forms:tensor_rankin-cohen}
is a consequence of the following.

\begin{lemma}
\label{la:vector_valued_modular_forms:tensor_rankin-cohen_covariance}
Let~$f$ and~$g$ be as in \fref{Definition}{def:vector_valued_modular_forms:tensor_products}.
Then for all~$\nu \ge 0$,
\begin{gather}
  \label{eq:la:vector_valued_modular_forms:tensor_rankin-cohen_covariance}
  \big[ f,g \big]_\nu^\otimes
  \Big|_{k+l+2\nu,\rho_1\otimes\rho_2}\,
  \ga
  =
  \big[ f,g \big]_\nu^\otimes
  \quad\tx{for all\/ }
  \ga\in\Mp{2}(\ZZ)
  \tx{.}
\end{gather}
\end{lemma}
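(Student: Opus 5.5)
Reading the statement, the plan is to reduce the vector-valued claim to the classical Rankin--Cohen covariance, applied componentwise, and then let the constant matrices $\rho_1(\ga)$, $\rho_2(\ga)$ pass through the bracket. The bracket is bilinear and commutes with tensoring by constant matrices. It therefore suffices to prove a scalar statement with an arbitrary automorphy factor: if $f_i, g_j$ are smooth functions on~$\HS$, then for any $(\ga,\om)\in\Mp{2}(\ZZ)$ the bracket of the slashed functions equals the slash of the bracket. Concretely, write $\om(\tau)^{-2k} f_i(\ga\tau) =: (f_i|_k\ga)(\tau)$, and similarly for the $g_j$. The identity to be used is
\begin{gather*}
  \big[ f_i|_k\ga,\, g_j|_l\ga \big]_\nu
  =
  \big[ f_i, g_j \big]_\nu \big|_{k+l+2\nu}\, \ga
  \tx{.}
\end{gather*}
This is the classical Rankin--Cohen covariance. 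It is a purely formal identity, valid for arbitrary smooth (indeed real-analytic, as in \fref{Definition}{def:rankin-cohen}) functions, with half-integral weights allowed as long as $\om^{-2k}$ and $\om^{-2l}$ are used consistently. I would cite it from~\cite{cohen-1975} and~\cite{zagier-1994}. Its standard proof goes through the Cohen--Kuznetsov generating series $\tilde f(\tau,X)=\sum_n f^{(n)}(\tau)X^n/(n!\,(k)_n)$, which transforms under $\ga$ by the factor $\om^{2k}\exp\bigl(cX/(2\pi i(c\tau+d))\bigr)$. The exponential factors of $\tilde f(\tau,X)$ and $\tilde g(\tau,-X)$ cancel in the product, and the bracket is read off as a coefficient.

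With this identity, I compute both sides using $f|_{k,\rho_1}\ga=f$, i.e.\ $f_i|_k\ga=\sum_{i'}\rho_1(\ga)_{ii'}f_{i'}$, and similarly for~$g$. The key steps are:
\begin{enumerateroman}
\item The slash operator $|_k\ga$ commutes with multiplication by constant matrices. Hence the vector $\bigl(f_i|_k\ga\bigr)_i$ equals $\rho_1(\ga) f$, and likewise for~$g$.
\item The bracket is bilinear with constant coefficients, so $[\rho_1(\ga)f,\rho_2(\ga)g]_\nu^\otimes=(\rho_1(\ga)\otimes\rho_2(\ga))[f,g]_\nu^\otimes$.
\item Combining (i), (ii) and the covariance identity gives $[f,g]_\nu^\otimes|_{k+l+2\nu}\ga=(\rho_1\otimes\rho_2)(\ga)[f,g]_\nu^\otimes$, where the slash on the left is the scalar slash applied entrywise.
\item Finally, I apply $(\rho_1\otimes\rho_2)(\ga)^{-1}$ to obtain~\eqref{eq:la:vector_valued_modular_forms:tensor_rankin-cohen_covariance}.
\end{enumerateroman}

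The main obstacle I expect is making sure the classical covariance holds for metaplectic elements and half-integral weights with the correct automorphy factors. In particular, the weight $k+l+2\nu$ factor must be $\om^{-2k-2l}(c\tau+d)^{-2\nu}$, which holds since $\om^2=c\tau+d$. I would handle this by redoing the Cohen--Kuznetsov argument with $\om^{2k}$ in place of $(c\tau+d)^k$. Only the derivative $\partial_\tau \om^{2k}=k\,c\,\om^{2k-2}$ enters, and this has the same form as in the integral-weight case.

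A secondary point is that for harmonic Maass forms $f$ is only real-analytic. The covariance identity, however, uses nothing but the chain rule for $\partial_\tau$, so it applies verbatim to real-analytic functions.
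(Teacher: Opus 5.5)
Your proposal is correct and follows essentially the same route as the paper: both reduce the vector-valued statement componentwise to the scalar covariance $[f_i|_k\ga,\, g_j|_l\ga]_\nu = [f_i,g_j]_\nu|_{k+l+2\nu}\ga$ (Theorem~7.1 of~\cite{cohen-1975}), and then remove the constant matrix $(\rho_1\otimes\rho_2)(\ga)$ using bilinearity of the bracket. The only difference is cosmetic. The paper checks the generators $T$ (via Fourier expansions) and $S$ separately, whereas you treat a general $\ga$ at once. You also make explicit the metaplectic factor $\om^{-2(k+l+2\nu)}=\om^{-2k}\om^{-2l}(c\tau+d)^{-2\nu}$ and the step $[\rho_1(\ga)f,\rho_2(\ga)g]^\otimes_\nu=(\rho_1(\ga)\otimes\rho_2(\ga))[f,g]^\otimes_\nu$, both of which the paper leaves implicit.
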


\begin{proof}
It suffices to show the claim for~$\ga\in\{S,T\}$.
The case~$\ga=T$ is clear after inspection of the Fourier series expansion.

From the slash-invariance of~$f$ and~$g$, we have
\begin{align*}
  \big[ f,g \big]_\nu^\otimes
  \Big|_{k+l+2\nu,\rho_1\otimes\rho_2}\,
  S
   & =
  \sqrt{\tau}^{-2(k+l+2\nu)}\,
  (\rho_1\otimes\rho_2)(S)^{-1}
  \big[
    f
    \big(
    -
    \tfrac{1}{\tau}
    \big)
    ,\,
    g
    \big(
    -
    \tfrac{1}{\tau}
    \big)
    \big]_\nu
  ^\otimes
  \\
   & =
  \sqrt{\tau}^{-2(k+l+2\nu)}
  \big[
    \sqrt{\tau}^{2k}
    f(\tau)
    ,\,
    \sqrt{\tau}^{2l}
    g(\tau)
    \big]_\nu
  ^\otimes
  \tx{.}
\end{align*}
Equating componentwise,~\eqref{eq:la:vector_valued_modular_forms:tensor_rankin-cohen_covariance} becomes
\begin{gather}
  \label{eq:la:vector_valued_modular_forms:tensor_rankin-cohen_covariance_proof}
  \big[
    \sqrt{\tau}^{2k}
    f_i(\tau)
    ,\,
    \sqrt{\tau}^{2l}
    g_j(\tau)
    \big]_\nu
  =
  \sqrt{\tau}^{2(k+l+2\nu)}
  \big[
    f_i(\tau)
    ,\,
    g_j(\tau)
    \big]_\nu
\end{gather}
for all~$i$ and~$j$.
On the other hand, Theorem~7.1 from~\cite{cohen-1975} states that
\begin{gather*}
  \big[
    f_i|_k\ga
    ,\,
    g_j|_l\ga
    \big]_\nu
  =
  \big[
    f_i
    ,\,
    g_j
    \big]_\nu
  \big|_{k+l+2\nu}
  \ga
  \tx{,}
\end{gather*}
and taking~$\ga=S$ here gives
\begin{gather*}
  \big[
    \sqrt{\tau}^{-2k}
    f_i
    \big(
    -
    \tfrac{1}{\tau}
    \big)
    ,
    \sqrt{\tau}^{-2l}
    g_j
    \big(
    -
    \tfrac{1}{\tau}
    \big)
  ]_\nu
  =
  \sqrt{\tau}^{-2(k+l+2\nu)}
  [f_i
  \big(
  -\tfrac{1}{\tau}
  \big)
  ,
  g_j
  \big(
  -\tfrac{1}{\tau}
  \big)
  ]_\nu
  \tx{.}
\end{gather*}
We obtain~\eqref{eq:la:vector_valued_modular_forms:tensor_rankin-cohen_covariance_proof}
when taking~$\tau\mapsto-\frac{1}{\tau}$ and then
canceling out a factor of~$(-1)^{k+l}$ on both sides and noting that~$(-1)^{2\nu}=1$.
\end{proof}

\begin{remark}
In the proof of the next lemma, we invoke Theorem~5 of~\cite{imamoglu-raum-richter-2014}. In the notation of this paper, in the second paragraph of its proof it is used that the kernel of~$\rho$ is a congruence subgroup to conclude that~$\Gamma$ is a congruence subgroup. This assumption was accidentally omitted in~\cite{imamoglu-raum-richter-2014}.
\end{remark}

\begin{lemma}
\label{la:vector_valued_modular_forms:tensor_rankin-cohen_cusp}
Assume that~$f$ and~$g$ are as in \fref{Definition}{def:vector_valued_modular_forms:tensor_products}, that the kernels of~$\rho_1$ and~$\rho_2$ are congruence subgroups, and that given~$\nu \ge 0$, $[f, g]^\otimes_\nu$ satisfies the assumptions in \fref{Definition}{def:holomorphic_projection}.
Then we have
\begin{gather*}
  \pi^\hol_{k+l+2\nu}
  \big(
  \big[
    f,g
    \big]_\nu
  ^\otimes
  \big)
  \in
  \rmM_{k+l+2\nu}
  \big(
  \rho_1
  \otimes
  \rho_2
  \big)
  \tx{.}
\end{gather*}
\end{lemma}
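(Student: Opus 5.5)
The plan is to derive the statement from Theorem~5 of~\cite{imamoglu-raum-richter-2014}, in the form discussed in the preceding remark. Its hypotheses are that $[f,g]^\otimes_\nu$ is invariant under the slash action of weight $k+l+2\nu$ and type $\rho_1\otimes\rho_2$, that its type has congruence kernel, and that it satisfies the growth conditions of \fref{Definition}{def:holomorphic_projection}.

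Slash-invariance is exactly \fref{Lemma}{la:vector_valued_modular_forms:tensor_rankin-cohen_covariance}. For the type, observe that $\ker\rho_1\cap\ker\rho_2\subseteq\ker(\rho_1\otimes\rho_2)$. An intersection of two congruence subgroups is again a congruence subgroup, and so is any subgroup containing one. Hence $\ker(\rho_1\otimes\rho_2)$ is a congruence subgroup. (If one works with preimages in $\Mp{2}(\ZZ)$, the same argument applies after projecting to $\SL{2}(\ZZ)$.) The growth conditions are assumed outright in the statement.

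Next I will recall how Theorem~5 is proved, and check that each step goes through under the slightly weaker \fref{Assumption}{it:def:holomorphic_projection:growth}, as the paper asserts. Set $F=[f,g]^\otimes_\nu$ and $\kappa=k+l+2\nu$. For each pair of basis vectors $v,w$ of $V(\rho_1\otimes\rho_2)$, consider the scalar function $\langle F,v\rangle$ on the congruence subgroup $\Ga=\ker(\rho_1\otimes\rho_2)$. There one characterizes the coefficient $c(m)$ of $\pi^\hol_\kappa(F)$ through the Petersson inner product of $F$ against Poincaré series $P_{m}$ of weight $\kappa$ and type $\rho_1\otimes\rho_2$ with values in $V$. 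Unfolding gives exactly the regularized integral $\lim_{s\to0}\int_0^\infty c(F;m;y)e^{-4\pi my}y^{s+\kappa-2}\,\intrmd y$. The limit $s\to 0$ handles weights $\kappa\le 2$ where the Poincaré series converge only after analytic continuation.

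Since the Poincaré series span the cusp forms, the holomorphic projection coincides with a genuine modular form on $\Ga$, namely the Riesz representative of $h\mapsto\langle F,h\rangle$ on $\rmS_\kappa(\rho_1\otimes\rho_2)$, plus the constant-term contribution $c_0$. Because this construction is equivariant, the result lies in $\rmM_\kappa(\rho_1\otimes\rho_2)$ rather than merely in scalar forms on $\Ga$. Alternatively, one can apply the scalar theorem on $\Ga$ componentwise and then use equivariance of $\pi^\hol$ under $\rho_1\otimes\rho_2$, which holds by the uniqueness of the Riesz representative.

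The main obstacle is justifying the unfolding and the interchange of the limit in $s$ with the Petersson integral under the weakened decay condition. Near the cusp, \fref{Assumption}{it:def:holomorphic_projection:constant} gives $F-c_0=\cO(y^{-\epsilon})$. This makes $\langle F-c_0,P_m(\cdot,s)\rangle$ absolutely convergent for $\mathrm{Re}\,s$ large and holomorphic down to $s=0$.

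At the other cusps, and as $y\to0$ in the unfolded strip, \fref{Assumption}{it:def:holomorphic_projection:growth} supplies the bound $c(F;m;y)=\cO(y^{1-\kappa+\epsilon})$. Against $y^{s+\kappa-2}$ this gives an integrand of size $y^{s-1+\epsilon}$, which is integrable at $0$ uniformly near $s=0$. This extra $\epsilon$ is exactly what the original proof used, so the argument transfers verbatim.

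Finally, I will note that the Eisenstein part, i.e. the constant term $c_0$, is handled as in \cite{imamoglu-raum-richter-2014} by subtracting a suitable vector-valued Eisenstein series. When $\kappa\le 2$ this may require Hecke's trick, which fits the $s$-regularization.
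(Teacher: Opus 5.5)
Your core argument is the paper's. Its proof also combines \fref{Lemma}{la:vector_valued_modular_forms:tensor_rankin-cohen_covariance} with Theorem~5 of~\cite{imamoglu-raum-richter-2014}, using that the kernel of $\rho_1\otimes\rho_2$ is congruence. It gets holomorphy from the definition of $\pi^\hol$ and reads off the behavior at the cusp from the Fourier expansion, without re-proving Theorem~5.

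\textbf{The weight-$2$ claim is false.} The problem lies where you go beyond the paper. You claim that the $s$-regularization and Hecke's trick make the argument produce a modular form also when the weight $k+l+2\nu$ is at most $2$. This fails in weight~$2$. On a trivial constituent of the type, the Eisenstein series you would subtract to remove $c_0$ is the non-holomorphic $E_2^{\ast} = E_2 - \frac{3}{\pi y}$, and $\pi^\hol_2(E_2^{\ast}) = E_2$ is only quasimodular.

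The paper's own computations give a counterexample at $\nu = 0$. Combining \fref{Proposition}{prop:holomorphic_projection:hurwitz:holomorphic_part}, \fref{Proposition}{prop:holomorphic_projection:hurwitz:nonholomorphic_part} and~\eqref{eq:classical_hurwitz_class_number_recursion}, let $F_1 + F_4$ be the sum of the first and fourth components of $\pi^\hol_2\big( [E^{\rmv\rmv}_{\frac{3}{2}}, \theta^{\rmv\rmv}]^\otimes_0 \big)$. It has constant term $H(0) = -\frac{1}{12}$. For $N \ge 1$, its coefficient of $e(N\tau)$ is $\sum_{m} H(4N-m^2) + \lambda_1(N) = 2\sigma_1(N)$. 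Hence $F_1 + F_4 = -\frac{1}{12} E_2$. On the other hand, by \fref{Lemma}{la:representation_theory:hurwitz:decomposition}, $\frac{1}{2}(F_1 + F_4)$ is the coordinate of the trivial constituent, so it would have to lie in $\rmM_2(\SL{2}(\ZZ)) = \{0\}$.

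So both your argument and the lemma need $k + l + 2\nu > 2$. This is harmless for the paper, which only applies the lemma with $\nu \ge 1$. You should impose the restriction rather than assert the weight-$2$ case.

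\textbf{A smaller point.} Your parenthetical about projecting to $\SL{2}(\ZZ)$ needs care. The image of $\ker\rho_1 \cap \ker\rho_2$ can be an index-two subgroup of the intersection of the images, namely when neither kernel contains the central element $S^4$. It is cleaner to work in $\Mp{2}(\ZZ)$ with a notion of congruence subgroup that is stable under intersections, for instance containing the theta-multiplier lift of some principal congruence subgroup. In the paper's application $\ker\ov\rhoH = \ker\rhoH$, so nothing more is needed there.
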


\begin{proof}
Slash-invariance follows from \fref{Lemma}{la:vector_valued_modular_forms:tensor_rankin-cohen_covariance} and the fact that the slash operator intertwines with~$\pi^\hol$ by Theorem~5 of~\cite{imamoglu-raum-richter-2014}, using the fact that the kernel of~$\rho_1 \otimes \rho_2$ is a congruence subgroup.
Holomorphicity follows from the definition of~$\pi^\hol$. The growth condition is satisfied by inspection of the Fourier series expansion.
\end{proof}

\subsection{An induced representation for~\texpdf{$\Ga_0(4)$}{Gamma0(4)}}%
\label{ssec:vector_valued_modular_forms:induced_representation_ga04}

In this section, we work out the details for the irreducible induction of the trivial representation of~$\Ga_0(4)$, which will appear in the proof of our main theorem. That is, we have~$\Ga=\Ga_0(4)$ and~$\rho=\bbone$ throughout this section.

We recall that~$\Ga$ has index 6 in~$\SL{2}(\ZZ)$, and we choose the right coset representatives
\begin{gather}
  \label{eq:representation_theory:hurwitz:ind_ga04_coset_representatives}
  \ga_1
  =
  \begin{psmatrix}
    1&0\\
    0&1
  \end{psmatrix}
  \tx{,}\quad
  \ga_2
  =
  \begin{psmatrix}
    1&0\\
    2&1
  \end{psmatrix}
  \tx{,}\quad
  \ga_3
  =
  \begin{psmatrix}
    0&-1\\
    1&0
  \end{psmatrix}
  \tx{,}\quad
  \ga_4
  =
  \begin{psmatrix}
    0&-1\\
    1&2
  \end{psmatrix}
  \tx{,}\quad
  \ga_5
  =
  \begin{psmatrix}
    0&-1\\
    1&1
  \end{psmatrix}
  \tx{,}\quad
  \ga_6
  =
  \begin{psmatrix}
    0&-1\\
    1&3
  \end{psmatrix}
  \tx{.}
\end{gather}
Since~$\rho$ is trivial, the induced representation is a permutation representation of~$\SL{2}(\ZZ)$ given by right-multiplication of~$S^{-1}$ and~$T^{-1}$ on~$\SL{2}(\ZZ) \mathbin{\slash} \Ga$. Our choice of coset representatives yields the matrix representation
\begin{gather}
  \label{eq:representation_theory:hurwitz:ind_ga04}
  \Ind_{\Ga_0(4)}^{\SL{2}(\ZZ)}\, \bbone
  \defcol\quad
  S
  \lmto
  \begin{psmatrix}
    0 & 0 & 1 & 0 & 0 & 0 \\
    0 & 0 & 0 & 1 & 0 & 0 \\
    1 & 0 & 0 & 0 & 0 & 0 \\
    0 & 1 & 0 & 0 & 0 & 0 \\
    0 & 0 & 0 & 0 & 0 & 1 \\
    0 & 0 & 0 & 0 & 1 & 0
  \end{psmatrix}
  \tx{,}\quad
  T
  \lmto
  \begin{psmatrix}
    1 & 0 & 0 & 0 & 0 & 0 \\
    0 & 1 & 0 & 0 & 0 & 0 \\
    0 & 0 & 0 & 0 & 1 & 0 \\
    0 & 0 & 0 & 0 & 0 & 1 \\
    0 & 0 & 0 & 1 & 0 & 0 \\
    0 & 0 & 1 & 0 & 0 & 0
  \end{psmatrix}
  \tx{.}
\end{gather}
We also need~$\Ind_{\Ga_0(2)}^{\SL{2}(\ZZ)}\, \bbone$, which we determine using three of our choices~$\ga_1$,~$\ga_3$,~$\ga_5$ as coset representatives for~$\SL{2}(\ZZ) \slash \Ga_0(2)$. This yields the permutation representation
\begin{gather}
  \label{eq:representation_theory:hurwitz:ind_ga02}
  \Ind_{\Ga_0(2)}^{\SL{2}(\ZZ)}\, \bbone
  \defcol\quad
  S
  \lmto
  \begin{psmatrix}
    0 & 1 & 0 \\
    1 & 0 & 0 \\
    0 & 0 & 1
  \end{psmatrix}
  \tx{,}\quad
  T
  \lmto
  \begin{psmatrix}
    1 & 0 & 0 \\
    0 & 0 & 1 \\
    0 & 1 & 0
  \end{psmatrix}
  \tx{.}
\end{gather}

Elements of~$\rmM_k\bigl(  \Ind_{\Ga_0(4)}^{\SL{2}(\ZZ)}\, \bbone \bigr)$ take values in a~$6$-dimensional~$\CC$-vector space, whose basis is naturally indexed by~$\ga_1, \ldots, \ga_6$, and transform under~$|_k$ per the matrices in~\eqref{eq:representation_theory:hurwitz:ind_ga04}.
The space~$\rmS_k(\Ga_0(4))$ is spanned by newforms of level~$\Ga_0(4)$, as well as~$f(z)$ and~$f(2z)$ for~$f \in \rmS_k(\Ga_0(2))$.
We deduce from this and the list of newforms at LMFDB~\cite{lmfdb} that the space of cusp forms for~$\Ga_0(4)$ of weight~$2$ and~$4$ are empty and of weight~$6$ and~$8$ are spanned by
\begin{gather*}
  \eta(2\tau)^{12}
  \quad\tx{and}\quad
  \eta(\tau)^8\,
  \eta(2\tau)^8
  \tx{,}\;\;
  \eta(2\tau)^8\,
  \eta(4\tau)^8
  \tx{,}
\end{gather*}
respectively.
By \fref{Proposition}{prop:vector_valued_modular_forms:induced_representations} this yields that
\begin{gather}
  \label{eq:representation_theory:hurwitz:ind_ga04_modular_forms_wt4}
  \dim\, \rmS_4\big( \Ind_{\Ga_0(4)}^{\SL{2}(\ZZ)}\, \bbone \big)
  =
  0
  \tx{,}
\end{gather}
and, after using Fourier series expansions to verify the given equalities, the bases
\begin{gather}
  \label{eq:representation_theory:hurwitz:ind_ga04_modular_forms_wt6}
  \Ind\big( \eta(2\tau)^{12} \big)
  =
  \begin{pmatrix}
    \eta(2\tau)^{12} |_6\, \ga_1 \\
    \eta(2\tau)^{12} |_6\, \ga_2 \\
    \eta(2\tau)^{12} |_6\, \ga_3 \\
    \eta(2\tau)^{12} |_6\, \ga_4 \\
    \eta(2\tau)^{12} |_6\, \ga_5 \\
    \eta(2\tau)^{12} |_6\, \ga_6
  \end{pmatrix}
  =
  \begin{pmatrix}
    \eta(2\tau)^{12}                                    \\
    -\eta(2\tau)^{12}                                   \\
    -\frac{1}{64}\, \eta\big(\frac{\tau}{2}\big)^{12}   \\
    \frac{1}{64}\, \eta\big(\frac{\tau}{2}\big)^{12}    \\
    -\frac{1}{64}\, \eta\big(\frac{\tau+1}{2}\big)^{12} \\
    \frac{1}{64}\, \eta\big(\frac{\tau+1}{2}\big)^{12}
  \end{pmatrix}
  \in
  \rmS_6\big( \Ind_{\Ga_0(4)}^{\SL{2}(\ZZ)}\, \bbone \big)
\end{gather}
and
\begin{gather}
  \label{eq:representation_theory:hurwitz:ind_ga04_modular_forms_wt8}
  \begin{pmatrix}
    \eta(\tau)^8\, \eta(2\tau)^8
    \\
    \eta(\tau)^8\, \eta(2\tau)^8
    \\
    \frac{1}{16}\,
    \eta\big(\frac{\tau}{2}\big)^8\,
    \eta(\tau)^8
    \\
    \frac{1}{16}\,
    \eta\big(\frac{\tau}{2}\big)^8\,
    \eta(\tau)^8
    \\
    \frac{1}{16}\,
    \eta\big(\frac{\tau+1}{2}\big)^8\,
    \eta(\tau+1)^8
    \\
    \frac{1}{16}\,
    \eta\big(\frac{\tau+1}{2}\big)^8\,
    \eta(\tau+1)^8
  \end{pmatrix}
  \tx{,}\quad
  \begin{pmatrix}
    \eta(2\tau)^8\, \eta(4\tau)^8 \\
    \frac{1}{16}\,
    \eta\big(\tau+\frac{1}{2}\big)^8\,
    \eta(2\tau+1)^8
    \\
    \frac{1}{4096}\,
    \eta\big(\frac{\tau}{2}\big)^8\,
    \eta\big(\frac{\tau}{4}\big)^8
    \\
    \frac{1}{4096}\,
    \eta\big(\frac{\tau+2}{4}\big)^8\,
    \eta\big(\frac{\tau}{2}+1\big)^8
    \\
    \frac{1}{4096}\,
    \eta\big(\frac{\tau+1}{4}\big)^8\,
    \eta\big(\frac{\tau+1}{2}\big)^8
    \\
    \frac{1}{4096}\,
    \eta\big(\frac{\tau+3}{4}\big)^8\,
    \eta\big(\frac{\tau+3}{2}\big)^8
  \end{pmatrix}
  \in
  \rmS_8\big( \Ind_{\Ga_0(4)}^{\SL{2}(\ZZ)}\, \bbone \big)
  \tx{.}
\end{gather}

Continuing with representation theory, we next deduce~$\genInd_{\Ga_0(4)}^{\SL{2}(\ZZ)}\, \bbone$. By its definition in~\eqref{eq:def:genuine_induction} we have
\begin{gather*}
  \genInd_{\Ga_0(4)}^{\SL{2}(\ZZ)}\, \bbone
  =
  \Ind_{\Ga_0(4)}^{\SL{2}(\ZZ)}\, \bbone
  \mathbin{\Big\slash}
  \Ind_{\Ga_0(2)}^{\SL{2}(\ZZ)}\, \bbone
  \tx{.}
\end{gather*}
Since~$\ga_i$ and~$\ga_{i+1}$ are in the same coset of~$\Ga_0(2)$ for~$i = 1, 3, 5$, specializing~\eqref{eq:representation_theory:hurwitz:ind_inclusion}, we obtain the inclusion
\begin{gather}
  \label{eq:representation_theory:hurwitz:ind_ga02_to_ga04}
  \Ind_{\Ga_0(2)}^{\SL{2}(\ZZ)}\, \bbone
  \lhra
  \Ind_{\Ga_0(4)}^{\SL{2}(\ZZ)}\, \bbone
  \quad\tx{with matrix }
  \begin{psmatrix}
    1 & 0 & 0 \\
    1 & 0 & 0 \\
    0 & 1 & 0 \\
    0 & 1 & 0 \\
    0 & 0 & 1 \\
    0 & 0 & 1
  \end{psmatrix}
  \tx{.}
\end{gather}
Thus, when we quotient the induction from~$\Ga$ by the one from~$\Ga_0(2)$, we calculate the quotient of the vector space with basis~\eqref{eq:representation_theory:hurwitz:ind_ga04_coset_representatives} modulo the relations~$\ov\ga_1+\ov\ga_2=\ov\ga_3+\ov\ga_4=\ov\ga_5+\ov\ga_6=0$, where we write~$\ov\ga_i$ for the image of~$\ga_i$ in the quotient.
If we choose~$\delta_1 = \ov\ga_1$,~$\delta_2 = \ov\ga_3$, and~$\delta_3 = \ov\ga_5$ to be a basis for the irreducible induction, we obtain the matrices
\begin{gather}
  \label{eq:representation_theory:hurwitz:genind_ga04}
  \genInd_{\Ga_0(4)}^{\SL{2}(\ZZ)}\, \bbone
  \defcol\quad
  S
  \lmto
  \begin{psmatrix}
    0 & 1 &  0 \\
    1 & 0 &  0 \\
    0 & 0 & -1
  \end{psmatrix}
  \tx{,}\quad
  T
  \lmto
  \begin{psmatrix}
    1 &  0 & 0 \\
    0 &  0 & 1 \\
    0 & -1 & 0
  \end{psmatrix}
  \tx{.}
\end{gather}
For example,~$\ga_5$ is sent to~$\ga_6$ by~$S$ in~\eqref{eq:representation_theory:hurwitz:ind_ga04}, which maps to~$-\ov\ga_5$ in the irreducible induction by the above relations. This explains the appearance of~$-1$ in the image of~$S$ in~\eqref{eq:representation_theory:hurwitz:genind_ga04}. From the matrix in~\eqref{eq:representation_theory:hurwitz:ind_ga02_to_ga04} we deduce the $\delta_i$\nbd{}preimages
\begin{gather*}
  v_1
  =
  \mfrac{1}{2}
  \begin{psmatrix} 1 \\ -1 \\ 0 \\ 0 \\ 0 \\ 0 \end{psmatrix}
  \tx{,}\quad
  v_2
  =
  \mfrac{1}{2}
  \begin{psmatrix} 0 \\ 0 \\ 1 \\ -1 \\ 0 \\ 0 \end{psmatrix}
  \tx{,}\quad
  v_3
  =
  \mfrac{1}{2}
  \begin{psmatrix} 0 \\ 0 \\ 0 \\ 0 \\ 1 \\ -1 \end{psmatrix}
  \tx{.}
\end{gather*}

Computing the image of the vector-valued modular form in~\eqref{eq:representation_theory:hurwitz:ind_ga04_modular_forms_wt6}, we obtain the basis element
\begin{gather}
  \label{eq:representation_theory:hurwitz:genind_ga04_modular_forms_wt6}
  \mfrac{1}{2}
  \begin{pmatrix}
    \eta(2\tau)^{12}
    -
    (-\eta(2\tau)^{12})
    \\
    -\frac{1}{64}\, \eta\big(\frac{\tau}{2}\big)^{12}
    -
    \frac{1}{64}\, \eta\big(\frac{\tau}{2}\big)^{12}
    \\
    -\frac{1}{64}\, \eta\big(\frac{\tau+1}{2}\big)^{12}
    -
    \frac{1}{64}\, \eta\big(\frac{\tau+1}{2}\big)^{12}
  \end{pmatrix}
  =
  \begin{pmatrix}
    \eta(2\tau)^{12}                                  \\
    -\frac{1}{64}\, \eta\big(\frac{\tau}{2}\big)^{12} \\
    -\frac{1}{64}\, \eta\big(\frac{\tau+1}{2}\big)^{12}
  \end{pmatrix}
  \in
  \rmS_6\big( \genInd_{\Ga_0(4)}^{\SL{2}(\ZZ)}\, \bbone \big)
  \tx{.}
\end{gather}
We note that the first modular form in~\eqref{eq:representation_theory:hurwitz:ind_ga04_modular_forms_wt8} maps to zero. The second one maps to the basis element
\begin{gather}
  \label{eq:representation_theory:hurwitz:genind_ga04_modular_forms_wt8}
  \begin{pmatrix}
    \frac{1}{2}\,
    \eta(2\tau)^8\, \eta(4\tau)^8
    -
    \frac{1}{32}\,
    \eta\big(\tau+\frac{1}{2}\big)^8\,
    \eta(2\tau+1)^8
    \\
    \frac{1}{8192}\, \big(
    \eta\big(\frac{\tau}{2}\big)^8\,
    \eta\big(\frac{\tau}{4}\big)^8
    -
    \eta\big(\frac{\tau+2}{4}\big)^8\,
    \eta\big(\frac{\tau}{2}+1\big)^8
    \big)
    \\
    \frac{1}{8192}\, \big(
    \eta\big(\frac{\tau+1}{4}\big)^8\,
    \eta\big(\frac{\tau+1}{2}\big)^8
    -
    \eta\big(\frac{\tau+3}{4}\big)^8\,
    \eta\big(\frac{\tau+3}{2}\big)^8
    \big)
  \end{pmatrix}
  \in
  \rmS_8\big( \genInd_{\Ga_0(4)}^{\SL{2}(\ZZ)}\, \bbone \big)
  \tx{.}
\end{gather}

\subsection{Decomposition of a tensor product}%
\label{ssec:vector_valued_modular_forms:tensor_product_hurwitz}

Recall the representation~$\rhoH$ from \fref{Section}{ssec:preliminaries:class_number_generating_series} associated with vector-valued generating series of Hurwitz class numbers. In this section, we decompose~$\ov{\rhoH} \otimes \rhoH$ into its irreducible subrepresentations, which we have studied in \fref{Section}{ssec:vector_valued_modular_forms:induced_representation_ga04}.

Henceforth, we regard~$\ov{\rhoH} \otimes \rhoH$ as a representation of~$\SL{2}(\ZZ)$; it factors through~$\SL{2}(\ZZ)$ via the quotient map~$\Mp{2}(\ZZ) \ra \SL{2}(\ZZ)$.

\begin{lemma}
\label{la:representation_theory:hurwitz:decomposition}
With~$\rhoH$ as in~\eqref{eq:def:representation:hurwitz}, we have the decomposition into irreducible representations
\begin{gather*}
  \ov\rhoH \otimes \rhoH
  \cong
  \bbone
  \oplus
  \genInd_{\Ga_0(4)}^{\SL{2}(\ZZ)}\, \bbone
  \tx{.}
\end{gather*}
The first and second direct summands are realized on the subspace spanned by the columns of
\begin{gather}
  \label{eq:la:representation_theory:hurwitz:decomposition:bases}
  \begin{psmatrix} 1 \\ 0 \\ 0 \\ 1 \end{psmatrix}
  \quad\tx{and}\quad
  \begin{psmatrix}
    1 & 0 & 0 \\
    0 & 1 & e(\frac{3}{4}) \\
    0 & 1 & e(\frac{1}{4}) \\
    -1 & 0 & 0
  \end{psmatrix}
  \tx{.}
\end{gather}
\end{lemma}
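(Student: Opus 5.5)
The plan is to verify the decomposition by direct computation with the explicit matrices. First I will write down $(\ov\rhoH \otimes \rhoH)(S)$ and $(\ov\rhoH \otimes \rhoH)(T)$ as $4\times 4$ matrices, using the basis $e_i \otimes e_j$ in the order $(1,1),(1,2),(2,1),(2,2)$, which matches the order of the components in Definition~\ref{def:vector_valued_modular_forms:tensor_products}. For $T$ this gives the diagonal matrix $\mathrm{diag}(1, e(\frac14), e(-\frac14), 1)$. For $S$, the scalar prefactor $\ov{\frac{1+e(3/4)}{2}}\cdot\frac{1+e(3/4)}{2} = \frac{|1-i|^2}{4} = \frac12$ combines with the Kronecker product of $\begin{psmatrix}1&1\\1&-1\end{psmatrix}$ with itself. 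Since both factors are the same real matrix, we get $\frac12\begin{psmatrix}1&1\\1&-1\end{psmatrix}^{\otimes 2}$. The resulting matrix is real symmetric with square equal to the identity, which confirms that the representation factors through $\SL{2}(\ZZ)$.

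Next I will check that the two proposed subspaces are invariant.

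\emph{The vector $(1,0,0,1)$.} It is fixed by $T$, since its nonzero entries sit at eigenvalue $1$. Applying $S$ to it gives $\frac12(1+1,\,1-1,\,1-1,\,1+1)=(1,0,0,1)$, so it spans a copy of $\bbone$.

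\emph{The three-dimensional piece.} Let $w_1=(1,0,0,-1)$, $w_2=(0,1,1,0)$ and $w_3=(0,e(\frac34),e(\frac14),0)$ be the columns of the second matrix in~\eqref{eq:la:representation_theory:hurwitz:decomposition:bases}. I will compute $S$ and $T$ on each $w_i$ and aim to match \eqref{eq:representation_theory:hurwitz:genind_ga04}, whose columns say $S\delta_1=\delta_2$, $S\delta_2=\delta_1$, $S\delta_3=-\delta_3$, $T\delta_1=\delta_1$, $T\delta_2=-\delta_3$ and $T\delta_3=\delta_2$. I expect the following:
\begin{itemize}
\item $Tw_1=w_1$;
\item $Tw_2=(0,i,-i,0)=-w_3$, since $e(\frac34)=-i$;
\item $Tw_3=(0,1,1,0)=w_2$;
\item $Sw_1=w_2$ and $Sw_2=w_1$, directly from the Kronecker product;
\item $Sw_3=-w_3$, because $w_3$ is antisymmetric under swapping the middle coordinates and $S$ acts on that antisymmetric vector by $\frac12\cdot(-2)=-1$.
\end{itemize}
This identifies the span of the $w_i$ with $\genInd_{\Ga_0(4)}^{\SL{2}(\ZZ)}\bbone$ through $w_i\mapsto\delta_i$, on the level of the generators. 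The main delicate point is the sign and ordering conventions: whether the matrices act on columns or rows, and whether $e(\frac34)$ or $e(\frac14)$ lands in the second coordinate. If the first attempt gives $T w_2 = +w_3$, I would pass to the inverse or transpose convention used in~\eqref{eq:representation_theory:hurwitz:ind_ga04}, or rescale $w_3$ by $-1$. Either fix still yields an isomorphism of representations.

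The $w_i$ together with $(1,0,0,1)$ are linearly independent; one can see this because the first and fourth coordinates separate $(1,0,0,1)$ from $w_1$, and the middle coordinates separate $w_2$ from $w_3$. So the space is a direct sum of the two pieces. For irreducibility of the three-dimensional piece, I would argue as follows. Any invariant subspace is a sum of $T$-eigenspaces, and $T$ has eigenvalues $1,\pm i$ on the three-dimensional piece, which are distinct. Hence an invariant subspace is spanned by some of the eigenvectors $w_1$ and $w_2\pm i w_3$. But $S$ mixes $w_1$ with $w_2$, and $S w_3 = -w_3 \neq$ a multiple of $w_2\pm iw_3$'s partner, so no proper nonzero subset of these eigenvectors is $S$-stable. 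This completes the decomposition.
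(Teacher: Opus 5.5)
Your proposal is correct and follows essentially the paper's route. The paper conjugates $(\ov\rhoH\otimes\rhoH)(S)$ and $(\ov\rhoH\otimes\rhoH)(T)$ by the change-of-basis matrix $u$ whose columns are the given vectors, obtaining $\bbone$ plus the block in~\eqref{eq:representation_theory:hurwitz:genind_ga04}, which is exactly what your column-by-column computation of $Sw_i$ and $Tw_i$ verifies, and then appeals to an eigenspace argument for irreducibility. Your signs (e.g.\ $Tw_2=-w_3$) are right, so no convention fix is needed, and although your irreducibility sentence is garbled as written, the intended check works: the $T$-eigenvectors $w_1$, $w_2+iw_3$, $w_2-iw_3$ have distinct eigenvalues, and since $Sw_1=w_2$ and $S(w_2\pm iw_3)=w_1\mp iw_3$, no proper nonempty subset of them spans an $S$-stable subspace.
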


\begin{proof}
Recall that the matrix representation~$\rhoH$ is given in~\eqref{eq:def:representation:hurwitz} by
\begin{gather*}
  S
  \lmto
  \mfrac{1+e(\frac{3}{4})}{2}
  \begin{psmatrix}
    1 & 1 \\
    1 & -1
  \end{psmatrix}
  \tx{,}\quad
  T
  \lmto
  \begin{psmatrix}
    1 &  \\
    & e(\frac{1}{4})
  \end{psmatrix}
  \tx{.}
\end{gather*}
That is, the tensor product~$\ov\rhoH \otimes \rhoH$ is the matrix representation
\begin{gather*}
  S
  \lmto
  \mfrac{1}{2}
  \begin{psmatrix}
    1 &  1 &  1 &  1 \\
    1 & -1 &  1 & -1 \\
    1 &  1 & -1 & -1 \\
    1 & -1 & -1 &  1
  \end{psmatrix}
  \tx{,}\quad
  T
  \lmto
  \begin{psmatrix}
    1 &  \\
    & e(\frac{1}{4}) \\
    &                & e(\frac{3}{4}) &  \\
    &                &                & 1
  \end{psmatrix}
  \tx{,}
\end{gather*}
where we here and later omit some of the zero entries.

The change of basis associated with the bases in~\eqref{eq:la:representation_theory:hurwitz:decomposition:bases} is
\begin{gather*}
  u
  =
  \begin{psmatrix}
    1 &  1 & 0 & 0 \\
    0 &  0 & 1 & e(\frac{3}{4}) \\
    0 &  0 & 1 & e(\frac{1}{4}) \\
    1 & -1 & 0 & 0
  \end{psmatrix}
  \quad\tx{with}\quad
  u^{-1}
  =
  \mfrac{1}{2}
  \begin{psmatrix}
    1 & 0              & 0              &  1 \\
    1 & 0              & 0              & -1 \\
    0 & 1              & 1              &  0 \\
    0 & e(\frac{1}{4}) & e(\frac{3}{4}) &  0
  \end{psmatrix}
  \tx{.}
\end{gather*}
Applying it to the images of~$S$ and~$T$ under~$\ov\rhoH \otimes \rhoH$ we find
\begin{gather*}
  S
  \lmto
  \begin{psmatrix}
    1 \\
    & 0 & 1 &  0 \\
    & 1 & 0 &  0 \\
    & 0 & 0 & -1
  \end{psmatrix}
  \tx{,}\quad
  T
  \lmto
  \begin{psmatrix}
    1 \\
    & 1 &  0 & 0 \\
    & 0 &  0 & 1 \\
    & 0 & -1 & 0
  \end{psmatrix}
  \tx{.}
\end{gather*}
The top left entries correspond to the trivial representation and the bottom right~$3 \times 3$ blocks are precisely the matrices in~\eqref{eq:representation_theory:hurwitz:genind_ga04}.

A calculation with the eigenspaces associated with~$S$ and~$T$ shows that the genuine induction in the statement is irreducible as claimed.
\end{proof}

With this decomposition at hand, we can also determine the properties of spaces of modular forms that will occur in \fref{Section}{sec:proof_main_theorem}.

\begin{corollary}%
\label{cor:representation_theory:hurwitz_modular_forms_support}
Given~$\rhoH$ as in \fref{Section}{ssec:preliminaries:class_number_generating_series}, we have the following statements:
\begin{enumeratearabic}
\item
\label{it:cor:representation_theory:hurwitz_modular_forms_support:weight4}
We have~$\dim\, \rmS_4(\ov\rhoH \otimes \rhoH) = 0$.

\item
\label{it:cor:representation_theory:hurwitz_modular_forms_support:weight6}
Further, for~$f = (f_i)_{1 \le i \le 4} \in \rmS_6(\ov\rhoH \otimes \rhoH)$ we have~$c(f_1; n) = c(f_4; n) = 0$ if\/~$n \in 2 \ZZ$.

\item
\label{it:cor:representation_theory:hurwitz_modular_forms_support:weight6to10}
Finally, for~$k \in \{4, 6, 8, 10, 14\}$ and~$f = (f_i)_{1 \le i \le 4} \in \rmS_k(\ov\rhoH \otimes \rhoH)$, we have~$c(f_1; n) = -c(f_4; n)$ for all~$n \in \ZZ$.
\end{enumeratearabic}
\end{corollary}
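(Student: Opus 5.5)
The plan is to transport everything through the decomposition of Lemma~\ref{la:representation_theory:hurwitz:decomposition}. Write $u$ for the change of basis matrix from its proof. If $f \in \rmS_k(\ov\rhoH \otimes \rhoH)$, then $u^{-1} f = (g_0, h_1, h_2, h_3)$. Here $g_0 \in \rmS_k(\SL{2}(\ZZ))$ is a level~$1$ cusp form, since the first summand is trivial, and $h = (h_1,h_2,h_3) \in \rmS_k(\genInd_{\Ga_0(4)}^{\SL{2}(\ZZ)}\, \bbone)$. Reading off the first and fourth rows of $u$ gives
\[
  f_1 = g_0 + h_1
  \tx{,}\qquad
  f_4 = g_0 - h_1
  \tx{.}
\]
So $f_1 + f_4 = 2 g_0$ and $f_1 - f_4 = 2 h_1$. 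Each claim therefore reduces to a statement about level~$1$ cusp forms and about the first component of cusp forms for the genuine induction.

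By Proposition~\ref{prop:vector_valued_modular_forms:genuine_induced_representations}, $\rmS_k(\genInd \bbone)$ is the image of $\rmS_k(\Ga_0(4))$ modulo $\rmS_k(\Ga_0(2))$ under $f \mapsto A\,\Ind(f)$. I will use that the analogous statement for cusp forms holds with the same proof. The first component of $A\,\Ind(f)$ is $\tfrac12 (f - f|_k \ga_2)$.

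For item~\ref{it:cor:representation_theory:hurwitz_modular_forms_support:weight4}: $\rmS_4(\SL{2}(\ZZ)) = 0$, and \eqref{eq:representation_theory:hurwitz:ind_ga04_modular_forms_wt4} gives $\rmS_4$ of the full induction equal to zero. Hence its quotient $\rmS_4(\genInd \bbone)$ also vanishes, so $f = 0$.

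For item~\ref{it:cor:representation_theory:hurwitz_modular_forms_support:weight6}: $\rmS_6(\SL{2}(\ZZ)) = 0$, so $g_0 = 0$. Also $\rmS_6(\genInd \bbone)$ is spanned by \eqref{eq:representation_theory:hurwitz:genind_ga04_modular_forms_wt6}, so $h_1$ is a multiple of $\eta(2\tau)^{12}$. Since $\eta(\tau)^{12} = q^{1/2}\prod(1-q^n)^{12}$, we get
\[
  \eta(2\tau)^{12} = q \prod (1-q^{2n})^{12}
  \tx{,}
\]
which is supported on odd exponents. Therefore $c(f_1;n) = c(f_4;n) = 0$ for even $n$.

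For item~\ref{it:cor:representation_theory:hurwitz_modular_forms_support:weight6to10}: the weights $4,6,8,10,14$ are exactly those with $\rmS_k(\SL{2}(\ZZ)) = 0$. Then $g_0 = 0$ and $f_4 = -f_1$, which holds componentwise for all Fourier coefficients.

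The main obstacle is justifying that the modular forms realized by $h$ really are described by Proposition~\ref{prop:vector_valued_modular_forms:genuine_induced_representations} restricted to cusp forms, which requires that quotienting commutes with taking cusp forms. Note, however, that item~(3) does not use this at all, and items~(1) and~(2) use only that the surjection $\rmS_k(\Ind) \to \rmS_k(\genInd)$ given by composing with the quotient map has the stated image. To show that this map is surjective, I would use complete reducibility: $\genInd$ is a direct summand of $\Ind$ via the embedding given by the vectors $v_j$. This summand structure should be checked against the explicit bases $v_1, v_2, v_3$. The cusp condition is preserved because it is a condition on Fourier coefficients.
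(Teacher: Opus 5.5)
Your proposal is correct and follows the paper's proof. You pass through the decomposition of Lemma~\ref{la:representation_theory:hurwitz:decomposition}, then combine the vanishing of level-$1$ cusp forms in weights $4,6,8,10,14$ with the computations of $\rmS_4$ and $\rmS_6$ for the (genuine) induction from $\Ga_0(4)$. You also make explicit what the paper leaves implicit: $f_1 = g_0 + h_1$ and $f_4 = g_0 - h_1$ from the rows of $u$, and the splitting of the genuine induction off the full one via $v_1, v_2, v_3$, whose span is indeed invariant with $S$ and $T$ acting by the matrices in~\eqref{eq:representation_theory:hurwitz:genind_ga04}.
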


\begin{proof}
We combine \fref{Lemma}{la:representation_theory:hurwitz:decomposition} with the calculations in \fref{Section}{ssec:vector_valued_modular_forms:induced_representation_ga04}. Specifically, the first statement follows from~$\dim\, \rmS_4( \genInd_{\Ga_0(4)}^{\SL{2}(\ZZ)}\, \bbone) ) = 0$, which is a consequence of~\eqref{eq:representation_theory:hurwitz:ind_ga04_modular_forms_wt4}.
The second statement follows from~\eqref{eq:representation_theory:hurwitz:genind_ga04_modular_forms_wt6}, which implies that if~$k = 6$ both~$f_1$ and~$f_4$ are multiples of~$\eta(2 \tau)^{12}$.
The third statement uses the fact that~$\dim\, \rmS_k = \dim\, \rmS_k(\bbone) = 0$ for the given~$k$.
\end{proof}

\section{Fourier coefficients of holomorphic projections}%
\label{sec:holomorphic_projection_fourier_coefficients}
\subsection{Hypergeometric functions}%
\label{ssec:holomorphic_projection_fourier_coefficients:special_functions}

We evaluate some hypergeometric functions, which will arise in the holomorphic projection in \fref{Section}{ssec:holomorphic_projection_fourier_coefficients:calculations_constant_term}. We define them for complex~$z$ with~$|z| < 1$ via the Gauss series
\begin{gather*}
  {}_2F_1\big( a, b, c; z \big)
  \defeq
  \sum_{n = 0}^\infty
  \frac{(a)_n\, (b)_n}{(c)_n\, n!}\, z^n
  \tx{,}
\end{gather*}
where~$(a)_n = a (a+1) \cdots (a+n-1)$ is the rising factorial.

\begin{lemma}
\label{la:holomorphic_projection:hurwitz:hypergeometric_intermediate}
Given an integer~$m \ge 0$ and half-integer~$k$, we have for~$|1 - z| < 1$ that
\begin{gather*}
  {}_2F_1\big( 1, k + m, 2; z \big)
  =
  \mfrac{1}{1-k-m}\, z^{-1} \big( 1 - (1-z)^{1-k-m} \big)
  \tx{.}
\end{gather*}
\end{lemma}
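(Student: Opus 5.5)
The identity follows from a direct term-by-term computation of the Gauss series, valid first for $|z|<1$, and then from analytic continuation to the region $|1-z|<1$. Set $a \defeq k+m$. Since $k$ is a half-integer, $a$ is a half-integer, so $1-a \neq 0$ and the right hand side is well defined.

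The first step is to simplify the coefficients. We have $(1)_n = n!$ and $(2)_n = (n+1)!$. Hence
\begin{gather*}
  {}_2F_1(1, a, 2; z)
  =
  \sum_{n \ge 0} \frac{(a)_n}{(n+1)!}\, z^n
  \tx{.}
\end{gather*}
The second step is to compare this with the binomial series. Since
\begin{gather*}
  (1-z)^{1-a}
  =
  \sum_{j \ge 0} \frac{(a-1)_j}{j!}\, z^j
  \tx{,}
\end{gather*}
we get
\begin{gather*}
  1 - (1-z)^{1-a}
  =
  - \sum_{n \ge 0} \frac{(a-1)_{n+1}}{(n+1)!}\, z^{n+1}
  \tx{.}
\end{gather*}
Using $(a-1)_{n+1} = (a-1)\,(a)_n$, this equals $(1-a)\, z \sum_{n\ge0} \frac{(a)_n}{(n+1)!} z^n$. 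Dividing by $(1-a)\,z$ recovers the series above, which proves the identity for $0<|z|<1$. At $z=0$ the right hand side is understood by its removable-singularity value $1$.

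The final step is to transfer the identity to $|1-z|<1$, which is how the statement is phrased. This step needs the most care. The Gauss series converges only on $|z|<1$, so on $|1-z|<1$ the left hand side must be read as the analytic continuation of ${}_2F_1$, for instance via its Euler integral representation or the principal branch on $\CC\setminus[1,\infty)$. The disc $|1-z|<1$ lies in $\mathrm{Re}\, z>0$, so it avoids the cut $[1,\infty)$ only partly; the real points $z\in[1,2)$ lie in it. We therefore use the principal branch of $(1-z)^{1-a}$ on $\CC\setminus[1,\infty)$, together with the continuation of ${}_2F_1$. Both sides are holomorphic on the connected domain $\CC \setminus [1,\infty)$, and they agree on the open set $|z|<1$. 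By the identity theorem they agree on the part of $|1-z|<1$ off the cut, and on the cut by a consistent choice of one-sided limits. If we instead need $|1-z|<1$ with $(1-z)^{1-a}$ defined through $\log(1-z)$ for $|1-z|<1$, we note that $(1-z)^{1-a}$ is holomorphic there via the principal logarithm of $w = 1-z$ with $|w|<1$. Because $\mathrm{Re}(1-z)$ may be negative, we then fix the branch consistently with the continuation from $|z|<1 \cap |1-z|<1$, which is nonempty and connected.
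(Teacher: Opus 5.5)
Your argument is correct, and it takes a different, more elementary route than the paper. You compare coefficients directly: with $a = k+m$ the Gauss series reduces to $\sum_{n\ge0}(a)_n z^n/(n+1)!$, and the relation $(a-1)_{n+1} = (a-1)(a)_n$ matches it against the binomial series of $(1-z)^{1-a}$.

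The paper instead applies a quadratic transformation (DLMF~15.8.21). This turns ${}_2F_1(1,k+m,2;z)$ into $\big(\tfrac12+\tfrac12\sqrt{1-z}\big)^{-2(k+m)}\,{}_2F_1\big(k+m,k+m-\tfrac12,\tfrac32;w^2\big)$ with $w = (1-\sqrt{1-z})/(1+\sqrt{1-z})$. It then evaluates the latter in closed form via DLMF~15.4.9 and simplifies the resulting algebraic expression.

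The paper's route only chains tabulated identities, but it is longer and leaves a nontrivial simplification to the reader. Yours is self-contained and shows that the only hypothesis actually used is $k+m\neq1$. It gives the identity on $|z|<1$ and then, by the identity theorem with principal branches, on all of $\CC\setminus[1,\infty)$. That larger domain is what is used later. The display in the proof of Proposition~\ref{prop:holomorphic_projection:fourier_term_rankin_cohen_nonconstant} evaluates Lemma~\ref{la:holomorphic_projection:hurwitz:hypergeometric}, which is derived from the present lemma, at $z = 1-\tilde{n}/|n| < 0$, where $|1-z| > 1$.

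You should remove the alternative in your last two sentences, because it is wrong. The disc $|1-z|<1$ is centered at $z=1$, i.e.\ at $w = 1-z = 0$, which is a branch point of $(1-z)^{1-a}$ since $1-a\notin\ZZ$. The principal logarithm of $w$ is not holomorphic along $(-1,0]$. No branch of $(1-z)^{1-a}$, nor of the continued ${}_2F_1$, is single-valued and holomorphic on the whole disc. So the hypothesis $|1-z|<1$ can only be meant for the slit disc $\{|1-z|<1\}\setminus[1,2)$, or for the real interval $(0,1)$. Your first continuation argument already covers both.
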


\begin{proof}
The symmetry in the first two arguments of~${}_2F_1$ and the quadratic transformation formula~(15.8.21) of~\cite{nist-dlmf-1-2-4} with~$a = k+m$ and~$b = k + m - \frac{1}{2}$ yield
\begin{gather*}
  {}_2F_1\big(1, k + m, 2; z \big)
  =
  \big( \tfrac{1}{2} + \tfrac{1}{2}\sqrt{1-z}\big) ^ {-2 (k + m)}\,
  {}_2F_1\Bigl(
  k+m,k+m-\tfrac{1}{2},\tfrac{3}{2};\,
  \big(
    \tfrac{1-\sqrt{1-z}}{1+\sqrt{1-z}}
    \big)
  ^2
  \Bigr)
\end{gather*}
after making the substitution~$z \mapsto (1-\sqrt{1-z})^2 \slash (1+\sqrt{1-z})^2$, simplifying, and swapping the left and right hand side.
Subsequently applying~(15.4.9) from~\cite{nist-dlmf-1-2-4} with~$a=k+m-\frac{1}{2}$ to the right hand side gives
\begin{align*}
   &
  \big( \tfrac{1}{2} + \tfrac{1}{2}\sqrt{1-z}\big) ^ {-2 (k + m)}\;
  \mfrac{1}{2}\,
  \mfrac{1 + \sqrt{1+z}}{1 + \sqrt{1-z}}\;
  \mfrac{1}{2(1 - k - m)}\,
  \Biggl(
  \Bigl( 1 + \mfrac{1 - \sqrt{1-z}}{1 + \sqrt{1+z}} \Bigr) ^ {2 (1 + k + m)}
  -
  \Bigl( 1 - \mfrac{1 - \sqrt{1-z}}{1 + \sqrt{1+z}} \Bigr) ^ {2 (1 + k + m)}
  \Biggr)
  \tx{,}
\end{align*}
which after a straightforward calculation yields the right hand side of the lemma.
\end{proof}

Extending \fref{Lemma}{la:holomorphic_projection:hurwitz:hypergeometric_intermediate} to more general third arguments, we have the following:

\begin{lemma}
\label{la:holomorphic_projection:hurwitz:hypergeometric}
For integers~$m, n \ge 0$ and a half-integer~$k$, we have for~$|1 - z| < 1$ that
\begin{gather*}
  {}_2F_1\big( 1, k + m, 2 + n; z \big)
  =
  (-1)^{n+1}
  \mbinom{1-k-m+n}{n+1}^{-1}
  \Bigl(
  \mfrac{(1 - z)^{1-k-m+n}}{z^{n+1}}
  -
  \sum_{j=0}^n
  \mbinom{k+m-1}{n-j}
  \mfrac{(1-z)^j}{z^{j+1}}
  \Bigr)
  \tx{.}
\end{gather*}
\end{lemma}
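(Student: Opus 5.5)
Our plan is to prove the formula by induction on~$n$, taking \fref{Lemma}{la:holomorphic_projection:hurwitz:hypergeometric_intermediate} as the base case~$n=0$. Write~$a \defeq k+m$. Since~$k$ is a half-integer,~$a$ is never an integer, so no binomial coefficient~$\binom{1-a+n}{n+1}$ vanishes and every division below is legitimate. For~$n=0$ the claimed right hand side is
\[
  -\mbinom{1-a}{1}^{-1}\Bigl(\mfrac{(1-z)^{1-a}}{z} - \mfrac{1}{z}\Bigr)
  =
  \mfrac{1}{1-a}\, z^{-1}\bigl(1-(1-z)^{1-a}\bigr)
  \tx{,}
\]
which is exactly \fref{Lemma}{la:holomorphic_projection:hurwitz:hypergeometric_intermediate}.

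For the induction step we use a contiguous relation that raises~$c$ by one. A convenient choice is the elementary identity, read off from the Gauss series with~$a$ fixed and first parameter~$1$:
\[
  {}_2F_1(1,a,c+1;z)
  =
  \mfrac{c}{(a-c)\,z}\Bigl({}_2F_1(1,a,c;z) - 1\Bigr) \cdot (\,\text{up to the correct linear factor}\,)
  \tx{.}
\]
We would first derive it precisely. Write~$F_c \defeq {}_2F_1(1,a,c;z) = \sum_n \frac{(a)_n}{(c)_n} z^n$. Then
\[
  (c-1)\bigl(F_{c-1}-1\bigr)
  =
  z\,\bigl((a-c+1)F_c + (c-1)\ldots\bigr)
  \tx{.}
\]
This is a routine coefficient comparison using~$(c-1)_{n+1} = (c-1)(c)_n$ and~$(a)_{n+1} = (a)_n(a+n)$. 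Equivalently, one can use DLMF~15.5.E? via the known formula for~$F(1,a;c;z)$, namely~$(c-1)(1-z)F_c' \ldots$. We would pick whichever relation expresses~$F_{c+1}$ linearly in~$F_c$, $1$ and~$z^{-1}$ with coefficients rational in~$a,c,z$. We expect the relation
\[
  (a-c)\,z\,F_{c+1} = c\,(1-z)\,F_c - c
\]
(rewritten appropriately) to be what comes out. The base case provides a consistency check: it has exactly this shape with~$c=1$ and~$F_1=(1-z)^{-a}$.

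With the relation in hand, we apply it with~$c = n+2$ to the inductive hypothesis for~$F_{n+2}$. Multiplying by~$(1-z)/z$ shifts the term~$(1-z)^{1-a+n}/z^{n+1}$ to~$(1-z)^{1-a+n+1}/z^{n+2}$. The prefactor ratio~$\frac{n+2}{a-n-2}$ combines with~$\binom{1-a+n}{n+1}^{-1}$ to give~$-\binom{1-a+n+1}{n+2}^{-1}$, accounting for the sign flip in~$(-1)^{n+2}$. The finite sum shifts to~$\sum_j \binom{a-1}{n-j}(1-z)^{j+1}/z^{j+2}$. The constant~$-c/((a-c)z)$ must then supply the missing~$j=0$ term~$\binom{a-1}{n+1}/z$ after normalization. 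This matching reduces to the identity~$\binom{1-a+n}{n+1}\cdot(\ldots) = (-1)^{n+1}\binom{a-1}{n+1}$-type relations, i.e.\ the upper negation~$\binom{-x}{r} = (-1)^r\binom{x+r-1}{r}$.

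We expect the main obstacle to be bookkeeping: getting the contiguous relation exactly right and verifying that the reindexed sum with the new constant term reproduces~$\binom{a-1}{n+1-j}$ for every~$j$. The coefficients~$\binom{a-1}{n-j}$ are shifted in their lower index when~$n \to n+1$. If the naive reindexing does not match, our fallback is to verify the claim directly. We would multiply both sides by~$z^{n+1}$ and compare Taylor coefficients at~$z=0$. The binomial expansion of~$(1-z)^{1-a+n}$ and of the finite sum must cancel all negative powers, a Vandermonde-type identity, and the remaining coefficients must equal~$(a)_r (n+1)!/(n+1+r)!$. Both sides are analytic on~$|1-z|<1$, so identity near the overlap with the Gauss disc extends by analytic continuation.
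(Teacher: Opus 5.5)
Your plan is correct and takes a genuinely different route from the paper; the relation you guessed is exactly right. Put $F_c=\sum_{r\ge 0}\frac{(a)_r}{(c)_r}z^r$. The coefficient of $z^{r+1}$ on both sides of
\begin{gather*}
  (a-c)\,z\,F_{c+1} = c\,(1-z)\,F_c - c
\end{gather*}
is $\frac{c\,(a)_r}{(c)_{r+1}}(a-c)$, since $c\bigl(\frac{(a)_{r+1}}{(c)_{r+1}}-\frac{(a)_r}{(c)_r}\bigr)=\frac{c\,(a)_r}{(c)_{r+1}}\bigl((a+r)-(c+r)\bigr)$. The constant terms are $0=c-c$.

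With $c=n+2$ the induction closes with no mismatch, so your fallback is unnecessary:
\begin{enumerate}[label=(\roman*)]
\item the prefactor becomes $(-1)^{n+2}\binom{2-a+n}{n+2}^{-1}$;
\item the shifted sum gives exactly the terms $j'=j+1\in\{1,\ldots,n+1\}$, with coefficient $\binom{a-1}{n+1-j'}$;
\item the leftover $-\frac{n+2}{(a-n-2)z}$ is precisely the new $j'=0$ term, because
\begin{gather*}
  \binom{2-a+n}{n+2}
  =
  (-1)^n \binom{a-1}{n+2}
  =
  (-1)^n\, \frac{a-n-2}{n+2} \binom{a-1}{n+1}
  \text{.}
\end{gather*}
\end{enumerate}
Your relation at $c=1$, with $F_1=(1-z)^{-a}$, is more than a consistency check. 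It proves the case $n=0$ outright, so you do not need Lemma~\ref{la:holomorphic_projection:hurwitz:hypergeometric_intermediate} at all.

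One point should be stated in the main argument, not only in the fallback. Coefficient comparison gives the relation only on $|z|<1$, while the statement concerns $|1-z|<1$. You therefore need analytic continuation of the principal branches to $\mathbb{C}\setminus[1,\infty)$. This covers the range $0<z<1$ used in Proposition~\ref{prop:holomorphic_projection:fourier_term_rankin_cohen_nonconstant}. The points of $[1,2)$ lie on the branch cut of both sides, which is an imprecision of the statement itself, not of your argument.

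The paper instead uses the $n$-fold differentiation formula (DLMF~15.5.6 with~15.5.10),
\begin{gather*}
  {}_2F_1(1,a,2+n;z)
  =
  (n+1)\,\frac{\Gamma(2-a)}{\Gamma(2-a+n)}\,(1-z)^{1-a+n}\,
  \frac{\mathrm{d}^n}{\mathrm{d}z^n}\Bigl((1-z)^{a-1}\,{}_2F_1(1,a,2;z)\Bigr)
  \text{.}
\end{gather*}
It inserts the case $n=0$ and expands by Leibniz's rule, so the whole finite sum appears in one step. The cost is citing that formula, tracking Gamma-function ratios, and relying on a base case that the paper proves via a quadratic transformation.

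The differentiation formula is essentially an iterate of one-step contiguous relations, so the two proofs are close relatives. Yours is self-contained and elementary: a two-line coefficient check plus an induction in which each step adds exactly one new $z^{-1}$ term, which also explains the shape of the finite sum.
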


\begin{proof}
Formula~15.5.6 together with~15.5.10 in~\cite{nist-dlmf-1-2-4} implies that
\begin{gather*}
  {_2F_1}\big( 1,k + m, 2 + n ;z \big)
  =
  (n+1)
  \mfrac{\Ga(2-k-m)}{\Ga(2-k-m+n)}\,
  (1-z)^{1-k-m+n}\,
  \mfrac{\rmd^{n}}{\rmd\mspace{-2mu} z^{n}}
  \Big(
  (1-z)^{k+m-1}\,
  {_2F_1}\big( 1,k+m,2;z \big)
  \Big)
  \tx{.}
\end{gather*}
We insert the expression from \fref{Lemma}{la:holomorphic_projection:hurwitz:hypergeometric_intermediate} into the right hand side and apply Leibniz's differentiation rule to obtain
\begin{align*}
      &
  \mfrac{n + 1}{1 - k - m}
  \mfrac{\Ga( 2 - k - m )}{\Ga( 2 - k - m + n )}
  (1 - z) ^ {1 - k - m + n}
  \\
      & \mspace{120mu}\cdot
  \Big(
  \mfrac{(-1)^{n+1} \Ga(n+1)}{z^{n+1}}
  +
  \sum_{j=0} ^ n
  \mbinom{n}{j}
  \mfrac{(-1)^j \Ga(j+1)}{z^{j+1}}
  \mfrac{\Ga(k + m )}{\Ga(k + m - n +j)}
    (-1)^{n-j}
    (1 - z) ^ { (k + m - 1 - n + j) }
  \Big)
  \\
  ={} &
  \mfrac{(-1)^n}{1-k-m}
  \mfrac{\Ga(n+2) \Ga(2-k-m)}{\Ga(2-k-m+n)}
  \Big(
  -\mfrac{(1-z)^{1-k-m+n}}{z^{n+1}}
  +
  \sum_{j=0}^n
  \mfrac{\Ga(k+m)}{\Ga(k+m-n+j) \Ga(n-j+1)}
  \cdot
  \mfrac{(1-z)^j}{z^{j+1}}
  \Big)
  \tx{,}
\end{align*}
after rewriting~$\binom{n}{j}$ in terms of~$\Ga$ and canceling common factors.
\end{proof}

\subsection{Calculations of constant terms}%
\label{ssec:holomorphic_projection_fourier_coefficients:calculations_constant_term}

Here, ``constant terms'' refers to terms in~$\big[ E^-_{\frac{3}{2}} , \theta_{\frac{1}{2}} \big]^\otimes_\nu$ arising from the term~$y^{-\frac{1}{2}}$ in~$E^-_{\frac{3}{2}}$.
These terms have the general form~$\big[y^{-\kappa},e(\td{n}\tau)\big]_\nu$ for~$\td{n} \geq 0$, so it is enough to calculate~$\pi^\hol_{2+2\nu}$ for these terms. For~$\td{n} > 0$, we use the integral in~\eqref{eq:def:holomorphic_projection}:

\begin{lemma}
\label{la:holomorphic_projection:fourier_term_constant}
For half-integers~$k, l$, an integer~$\nu \ge 0$, a rational numbers~$\td{n} > 0$, and a real number~$\kappa \le k+l-1+2\nu$:
\begin{gather}
  \label{eq:la:holomorphic_projection:fourier_term_constant}
  \pi^\hol_{k+l+2\nu}\bigl(
  y^{-\kappa}\,
  e( \td{n} \tau )
  \bigr)
  =
  (4\pi)^{\kappa}\,
  \mfrac{\Ga(k+l+2\nu-1-\kappa)}{\Ga(k+l+2\nu-1)}\,
  \td{n}^\kappa\,
  e(\td{n} \tau)
  \tx{.}
\end{gather}
\end{lemma}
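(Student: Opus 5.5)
This follows directly from \fref{Definition}{def:holomorphic_projection} with weight $K \defeq k+l+2\nu$. The function $f(\tau) = y^{-\kappa} e(\td{n}\tau)$ has a single nonzero Fourier coefficient, namely $c(f;\td{n};y) = y^{-\kappa} e^{-2\pi \td{n} y}$, written in the convention $e(\td{n}\tau) = e(\td{n}x)\, e^{-2\pi\td{n} y}$. All other coefficients, including the constant term $c_0$, vanish.

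First I check the two assumptions of \fref{Definition}{def:holomorphic_projection}. Assumption~\ref{it:def:holomorphic_projection:constant} holds with $c_0 = 0$, since $y^{-\kappa} e^{-2\pi\td{n}y}$ decays exponentially as $y \to \infty$, whatever the sign of $\kappa$. For Assumption~\ref{it:def:holomorphic_projection:growth}, as $y \to 0$ the coefficient is $\cO(y^{-\kappa})$. This is $\cO(y^{1-K+\epsilon})$ once $\kappa < K-1$. The boundary case $\kappa = K-1$ is handled by the regularizing limit $s \to 0$ in~\eqref{eq:def:holomorphic_projection}; there I would read the right-hand side through $\Ga(K-1-\kappa+s)$, or simply note that the only cases used later have $\kappa = \frac{1}{2}$ well inside the range.

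Next I compute the single surviving coefficient $c(\td{n})$. After the factor $e(\td n x)$ is stripped off, the integrand in~\eqref{eq:def:holomorphic_projection} is
\begin{gather*}
  y^{-\kappa}\, e^{-2\pi\td{n}y}\, e^{-4\pi \td{n} y}\, y^{s+K-2}
  \tx{.}
\end{gather*}
The only delicate point is keeping the exponential conventions straight. In~\eqref{eq:def:holomorphic_projection} the factor $e^{-4\pi m y}$ is paired with $c(f;m;y)$ taken relative to $e(mx)$, so that the total exponential is $e^{-4\pi \td n y}$. I will check this normalization against the known fact that $\pi^\hol_K(e(\td{n}\tau)) = e(\td{n}\tau)$, which is the case $\kappa = 0$: there $\int_0^\infty e^{-4\pi\td{n}y} y^{K-2}\,\intrmd y = \Ga(K-1)(4\pi\td{n})^{1-K}$, which cancels the prefactor exactly. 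With the same convention, the general integral is the Gamma integral
\begin{gather*}
  \int_0^\infty e^{-4\pi\td{n}y}\, y^{s+K-2-\kappa}\,\intrmd y
  =
  \Ga(K-1-\kappa+s)\,(4\pi\td{n})^{-(K-1-\kappa+s)}
  \tx{.}
\end{gather*}
This converges for $\Re s > \kappa+1-K$, and the limit $s \to 0$ is straightforward.

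Finally, I multiply by the prefactor $(4\pi\td{n})^{K-1}/\Ga(K-1)$. This gives
\begin{gather*}
  (4\pi\td{n})^{\kappa}\,\frac{\Ga(K-1-\kappa)}{\Ga(K-1)}
  \tx{,}
\end{gather*}
which is exactly~\eqref{eq:la:holomorphic_projection:fourier_term_constant}. The main (and only mild) obstacle is the bookkeeping of $y$-dependence in $c(f;m;y)$ against the factor $e^{-4\pi my}$. I resolve it by the consistency check $\kappa=0$, using Proposition~4 of~\cite{imamoglu-raum-richter-2014}.
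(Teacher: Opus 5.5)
Your argument is the paper's own: you evaluate the single surviving coefficient in \eqref{eq:def:holomorphic_projection} as a Gamma integral and multiply by the prefactor $(4\pi\td{n})^{K-1}/\Ga(K-1)$. Your final integral and result are correct, but two steps are wrong as written.

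\textbf{The Fourier coefficient.} In \fref{Definition}{def:holomorphic_projection} the coefficients are taken relative to $e(m\tau)$, since $f(\tau)=\sum c(f;m;y)\,e(m\tau)$. Hence $c(f;\td{n};y)=y^{-\kappa}$, with no factor $e^{-2\pi\td{n}y}$. Your displayed integrand $y^{-\kappa}e^{-2\pi\td{n}y}e^{-4\pi\td{n}y}y^{s+K-2}$ carries $e^{-6\pi\td{n}y}$. Followed literally, it would give $(6\pi\td{n})^{-(K-1-\kappa+s)}$ in place of $(4\pi\td{n})^{-(K-1-\kappa+s)}$, and hence the wrong constant. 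Your next sentence, that pairing $e^{-4\pi my}$ with a coefficient taken relative to $e(mx)$ yields the total exponential $e^{-4\pi\td{n}y}$, contradicts that display. The $\kappa=0$ sanity check does force the right normalization. With the paper's convention, though, no check is needed: the integrand is directly $y^{-\kappa}e^{-4\pi\td{n}y}y^{s+K-2}$.

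\textbf{The boundary case $\kappa=K-1$.} The limit $s\to0$ does not handle it. The integral is then $\Ga(s)\,(4\pi\td{n})^{-s}$, which diverges, matching the pole $\Ga(0)$ on the right-hand side. Moreover, $y^{1-K}$ is not $\cO(y^{1-K+\epsilon})$ for any $\epsilon>0$, so \fref{Assumption}{it:def:holomorphic_projection:growth} fails and $\pi^\hol_K$ is not even defined. The lemma must be read with the strict inequality $\kappa<k+l-1+2\nu$. The paper's proof tacitly uses this too, when it says the upper bound on $\kappa$ gives that assumption.

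Your fallback remark is the right resolution. In the application ($k=\frac{3}{2}$, $l=\frac{1}{2}$, $\kappa=\frac{1}{2}$, via \fref{Proposition}{prop:holomorphic_projection:fourier_term_rankin_cohen_constant}), the lemma is only invoked with exponents $\kappa+\mu\le\nu+\frac{1}{2}<2\nu+1=K-1$.
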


\begin{proof}
\fref{Assumption}{it:def:holomorphic_projection:constant} in \fref{Definition}{def:holomorphic_projection} is vacuous, and \fref{Assumption}{it:def:holomorphic_projection:growth} is satisfied by the upper bound on~$\kappa$.
From~\eqref{eq:def:holomorphic_projection}, the left hand side of~\eqref{eq:la:holomorphic_projection:fourier_term_constant} in the lemma equals~$c(\td{n})\, e(\td{n} \tau)$, where
\begin{gather*}
  c(\td{n})
  =
  \mfrac{( 4\pi \td{n} )^{k+l+2\nu-1}}{\Ga(k+l+2\nu-1)}\,
  \int\limits_0^\infty e^{-4\pi \td{n} y} y^{k+l+2\nu-\kappa-2} \intrmd y
  =
  \mfrac{( 4\pi \td{n} )^{k+l+2\nu-1}}{\Ga(k+l+2\nu-1)}
  \cdot
  \mfrac{\Ga(k+l+2\nu-\kappa-1)}{( 4\pi \td{n} )^{k+l+2\nu-\kappa-1}}
\end{gather*}
after substituting, using the Haar measure~$\intrmd y \slash y$. After further simplification this yields the right hand side of~\eqref{eq:la:holomorphic_projection:fourier_term_constant}.
\end{proof}

Next, we generalize from the product to the Rankin--Cohen bracket:

\begin{proposition}
\label{prop:holomorphic_projection:fourier_term_rankin_cohen_constant}
Under the assumptions in \fref{Lemma}{la:holomorphic_projection:fourier_term_constant} we have
\begin{alignat*}{2}
   &
  \pi^\hol_{k+l+2\nu}\big( \big[
      y^{-\kappa}
      ,\,
      e(\td{n}\tau)
      \big]_\nu
  \big)
   &   & =
  C\,
  \td{n}^{\kappa+\nu}\,
  e(\td{n}\tau)
  \tx{,}
\end{alignat*}
with
\begin{gather}
  \label{eq:prop:holomorphic_projection:fourier_term_rankin_cohen_constant}
  C
  =
  (4\pi)^\kappa\,
  \mfrac{\Ga(k+l+2\nu-1-\kappa)}{\Ga(k+l+2\nu-1)}\,
  \mbinom{k+\nu-1-\kappa}{\nu}
  \mbinom{k+l+2\nu-2}{\nu}
  \mbinom{k+l+2\nu-2-\kappa}{\nu}^{-1}
  \tx{.}
\end{gather}
\end{proposition}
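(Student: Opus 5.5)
The plan is to expand the bracket by Definition~\ref{def:rankin-cohen}, apply Lemma~\ref{la:holomorphic_projection:fourier_term_constant} term by term using linearity of $\pi^\hol_{k+l+2\nu}$, and then evaluate the resulting finite sum in closed form with the Pfaff--Saalschütz identity. Throughout, write $K \defeq k+l+2\nu$, where $y^{-\kappa}$ has weight $k$ and $e(\td n\tau)$ has weight $l$.

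\textbf{Step 1: the derivatives.} Since $\partial_\tau y = \frac{1}{2i}$, we get
\begin{gather*}
  \partial_\tau^\mu y^{-\kappa} = (-1)^\mu (\kappa)_\mu (2i)^{-\mu} y^{-\kappa-\mu}.
\end{gather*}
Because $(2\pi i)(2i) = -4\pi$, the sign cancels and the normalized derivative is
\begin{gather*}
  (y^{-\kappa})^{(\mu)} = (\kappa)_\mu\, (4\pi)^{-\mu}\, y^{-\kappa-\mu}.
\end{gather*}
For the other factor, $e(\td n\tau)^{(\nu-\mu)} = \td n^{\nu-\mu} e(\td n\tau)$. Hence
\begin{gather*}
  \big[y^{-\kappa}, e(\td n\tau)\big]_\nu
  = \sum_{\mu=0}^\nu (-1)^\mu \mbinom{k+\nu-1}{\nu-\mu}\mbinom{l+\nu-1}{\mu}\,(\kappa)_\mu\,(4\pi)^{-\mu}\,\td n^{\nu-\mu}\, y^{-\kappa-\mu} e(\td n\tau).
\end{gather*}

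\textbf{Step 2: project term by term.} Apply Lemma~\ref{la:holomorphic_projection:fourier_term_constant} to each summand with $\kappa$ replaced by $\kappa+\mu$. This requires $\kappa+\mu \le K-1$. That bound has to be checked, or the Gamma factors read as meromorphic continuations; in the application $\kappa=\frac12$, so it is harmless there. The powers combine as $(4\pi)^{-\mu}(4\pi)^{\kappa+\mu} = (4\pi)^\kappa$ and $\td n^{\nu-\mu}\,\td n^{\kappa+\mu} = \td n^{\kappa+\nu}$. This already gives the shape $C\,\td n^{\kappa+\nu} e(\td n\tau)$, with
\begin{gather*}
  C = \frac{(4\pi)^\kappa}{\Ga(K-1)} \sum_{\mu=0}^\nu (-1)^\mu \mbinom{k+\nu-1}{\nu-\mu}\mbinom{l+\nu-1}{\mu}(\kappa)_\mu\, \Ga(K-1-\kappa-\mu).
\end{gather*}

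\textbf{Step 3: evaluate the sum (main obstacle).} Rewrite each factor in Pochhammer form:
\begin{gather*}
  \Ga(K-1-\kappa-\mu) = \Ga(K-1-\kappa)\,\frac{(-1)^\mu}{(2-K+\kappa)_\mu},\qquad
  \mbinom{k+\nu-1}{\nu-\mu} = \mbinom{k+\nu-1}{\nu}\,\frac{(-\nu)_\mu (-1)^\mu}{(k)_\mu},\qquad
  \mbinom{l+\nu-1}{\mu} = \frac{(-1)^\mu (1-l-\nu)_\mu}{\mu!}.
\end{gather*}
Collecting the signs, the sum becomes
\begin{gather*}
  \Ga(K-1-\kappa)\,\mbinom{k+\nu-1}{\nu}\;{}_3F_2\big(-\nu,\, 1-l-\nu,\, \kappa;\; k,\, 2-K+\kappa;\; 1\big),
\end{gather*}
up to an overall sign of $(-1)^\nu$ or similar that must be tracked carefully.

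This ${}_3F_2$ terminates and is balanced. Indeed, the sum of the lower parameters is $k + 2 - K + \kappa = 2 - l - 2\nu + \kappa$, which equals one plus the sum of the upper parameters. So Pfaff--Saalschütz applies and gives
\begin{gather*}
  \frac{(k-\kappa)_\nu\,(k+l+\nu-1)_\nu}{(k)_\nu\,(K-1-\kappa)_\nu},
\end{gather*}
possibly in an equivalent form after using reflection to handle the negative parameters.

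Finally, convert these Pochhammer symbols back into the binomials $\binom{k+\nu-1-\kappa}{\nu}$, $\binom{K-2}{\nu}$ and $\binom{K-2-\kappa}{\nu}^{-1}$. The factor $\binom{k+\nu-1}{\nu}$ should cancel against $(k)_\nu/\nu!$, which leaves exactly~\eqref{eq:prop:holomorphic_projection:fourier_term_rankin_cohen_constant}.

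The bookkeeping of signs and of the Pochhammer-to-binomial conversions is where errors are most likely. As a safeguard, I would verify the final formula directly for $\nu = 0$, where it reduces to Lemma~\ref{la:holomorphic_projection:fourier_term_constant}, and for $\nu = 1$, where the sum has only two terms.
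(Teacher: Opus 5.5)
Your approach is the paper's: expand the bracket, project term by term with Lemma~\ref{la:holomorphic_projection:fourier_term_constant}, and close the finite sum with a Saalsch\"utz-type identity. The paper also does not check $\kappa+\mu\le K-1$. It writes the summand as $\binom{k+\nu-1}{\nu-\mu}\binom{l+\nu-1}{\mu}\binom{-\kappa}{\mu}\binom{K-2-\kappa}{\mu}^{-1}$ and applies Gould's identity~(11.3) with $x=l+\nu-1$, $y=-\kappa$, $z=k+\nu-1$. That identity is exactly Pfaff--Saalsch\"utz in binomial form.

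Two points in your Step~3 need fixing.

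First, there is no overall sign to track. The four factors $(-1)^\mu$ cancel and no $(-1)^\nu$ appears. So the sum equals $\Gamma(K-1-\kappa)\binom{k+\nu-1}{\nu}\,{}_3F_2(-\nu,1-l-\nu,\kappa;k,2-K+\kappa;1)$ exactly.

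Second, your Saalsch\"utz value has the wrong last factor. With $a=1-l-\nu$, $b=\kappa$, $c=k$, the denominator is $(c)_\nu(c-a-b)_\nu=(k)_\nu\,(k+l+\nu-1-\kappa)_\nu$, not $(k)_\nu(K-1-\kappa)_\nu$. As you wrote it, the expression converts to $\binom{K-2-\kappa+\nu}{\nu}^{-1}$ rather than $\binom{K-2-\kappa}{\nu}^{-1}$. So the claim that it ``leaves exactly'' the target is false as stated. Your proposed $\nu=1$ check would catch this: there the sum is $(k+l)(k-\kappa)\Gamma(K-2-\kappa)$, while your closed form gives $(k+l)(k-\kappa)\Gamma(K-1-\kappa)/(K-1-\kappa)$.

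With the corrected factor the conversion goes through:
\begin{gather*}
(k+l+\nu-1-\kappa)_\nu=\nu!\binom{K-2-\kappa}{\nu},\qquad
(k+l+\nu-1)_\nu=\nu!\binom{K-2}{\nu},\\
(k-\kappa)_\nu=\nu!\binom{k+\nu-1-\kappa}{\nu},\qquad
\binom{k+\nu-1}{\nu}=\frac{(k)_\nu}{\nu!}.
\end{gather*}
The stated formula for $C$ then follows exactly.
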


\begin{proof}
As usual~$\frac{\partial}{\partial\tau} = \frac{1}{2} (\frac{\partial}{\partial x} - i \frac{\partial}{\partial y})$. We find that
\begin{gather*}
  \big[
    y^{-\kappa}
    ,\,
    e(\td{n} \tau)
    \big]_\nu
  =
  \sum_{\mu=0}^\nu
  (-1)^\mu
  \mbinom{k+\nu-1}{\nu-\mu}
  \mbinom{l+\nu-1}{\mu}\,
  \mfrac{1}{(-4\pi)^\mu}
  \mfrac{\Ga(1-\kappa)}{\Ga(1-\kappa-\mu)}\,
  \td{n}^{\nu-\mu}\,
  y^{-\kappa-\mu}\,
  e(\td{n}\tau)
  \tx{.}
\end{gather*}
\fref{Lemma}{la:holomorphic_projection:fourier_term_constant} can then be applied to every term in the sum, giving
\begin{align*}
      &
  \pi^\hol_{k+l+2\nu}\big(\big[ y^{-\kappa},\, e(\td{n}\tau) \big]_\nu\big)
  \\
  ={} &
  \sum_{\mu=0}^\nu
  (-1)^\mu
  \mbinom{k+\nu-1}{\nu-\mu}
  \mbinom{l+\nu-1}{\mu}
  \mfrac{1}{(-4\pi)^\mu}
  \mfrac{\Ga(1-\kappa)}{\Ga(1-\kappa-\mu)}\;
  (4\pi)^{\kappa+\mu}
  \mfrac{\Ga(k+l+2\nu-1-\kappa-\mu)}{\Ga(k+l+2\nu-1)}\,
  \td{n}^{\nu-\mu}\,
  \td{n}^{\kappa+\mu}\,
  e(\td{n}\tau)
  \\
  ={} &
  (4\pi)^{\kappa}
  \mfrac{\Ga{(k+l+2\nu-1-\kappa)}}{\Ga{(k+l+2\nu-1)}}
  \sum_{\mu=0}^\nu
  \mbinom{k+\nu-1}{\nu-\mu}
  \mbinom{l+\nu-1}{\mu}
  \mbinom{-\kappa}{\mu}
  \mbinom{k+l+2\nu-2-\kappa}{\mu}^{-1}\,
  \td{n}^{\kappa+\nu}\,
  e(\td{n}\tau)
  \tx{.}
\end{align*}
Now~\eqref{eq:prop:holomorphic_projection:fourier_term_rankin_cohen_constant} follows from Identity~(11.3) from~\cite{gould-1972}, reproduced here for convenience:
\begin{gather*}
  \sum_{k=0}^n
  \mbinom{x}{k}
  \mbinom{y}{k}
  \mbinom{z}{n-k}
  \mbinom{x+y+z}{k}^{-1}
  =
  \mbinom{x+z}{n}
  \mbinom{y+z}{n}
  \mbinom{x+y+z}{n}^{-1}
  \tx{,}
\end{gather*}
where we allow ourselves to use~$k$,~$n$,~$x$,~$y$,~$z$ as in the reference.
\end{proof}

We now deal with the case~$\td{n} = 0$:

\begin{proposition}
\label{prop:holomorphic_projection:fourier_term_rankin_cohen_zeroth}
For~$\nu \ge 0$ and~$\kappa > 0$,
\begin{gather*}
  \pi^\hol_{k+l+2\nu}\big( \big[
    y^{-\kappa},\,
    1
    \big]_\nu
  \big)
  =
  0
  \tx{.}
\end{gather*}
\end{proposition}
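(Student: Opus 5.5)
We compute the bracket explicitly and then apply \fref{Definition}{def:holomorphic_projection} directly. Since $g = 1$ is constant, every derivative $g^{(\nu-\mu)}$ with $\nu - \mu \ge 1$ vanishes. Only the term $\mu = \nu$ survives in \eqref{eq:def:rankin-cohen}, so
\begin{gather*}
  \big[ y^{-\kappa},\, 1 \big]_\nu
  =
  (-1)^\nu
  \mbinom{l+\nu-1}{\nu}\,
  \big( y^{-\kappa} \big)^{(\nu)}
  \tx{.}
\end{gather*}
As in the proof of \fref{Proposition}{prop:holomorphic_projection:fourier_term_rankin_cohen_constant}, we have $\partial_\tau y^{-\kappa-j} = \frac{i}{2}(\kappa+j) y^{-\kappa-j-1}$. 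Iterating gives $(y^{-\kappa})^{(\nu)} = (-4\pi)^{-\nu} \frac{\Ga(1-\kappa)}{\Ga(1-\kappa-\nu)}\, y^{-\kappa-\nu}$, which is a constant multiple of $y^{-\kappa-\nu}$. When $\kappa$ is a positive integer with $\kappa+\nu-1 \ge \kappa$, the Gamma ratio should be read as the polynomial $(-\kappa)(-\kappa-1)\cdots(-\kappa-\nu+1)$. In every case the bracket equals $c\, y^{-\kappa-\nu}$ for some constant $c$, and it is supported entirely on the Fourier index $m = 0$.

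Next we apply the definition of $\pi^\hol_{k+l+2\nu}$. The function $f = c\, y^{-\kappa-\nu}$ has a single Fourier coefficient $c(f;0;y) = c\,y^{-\kappa-\nu}$, and $c(f;m;y) = 0$ for every $m \neq 0$. Assumption~\ref{it:def:holomorphic_projection:constant} holds with $c_0 = 0$ and $\epsilon = \kappa + \nu > 0$, because $\kappa > 0$ forces $f \to 0$ as $y \to \infty$. Assumption~\ref{it:def:holomorphic_projection:growth} holds vacuously, since all coefficients with $m > 0$ vanish. By \eqref{eq:def:holomorphic_projection}, the projection is therefore $c_0 + \sum_{m>0} c(m)\, e(m\tau)$, in which $c_0 = 0$ and every $c(m)$ is the integral of the zero function, hence $0$.

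The only delicate point is the identification $c_0 = 0$. The constant term of the holomorphic projection is the limit of $f$ as $y \to \infty$, not the Fourier coefficient $c(f;0;y)$ itself. This limit is zero exactly because $\kappa > 0$, which is why the hypothesis appears in the statement; for $\kappa = 0$ and $\nu = 0$ the projection would instead be $1$. We would also remark that if $\kappa$ is a positive integer, the bracket may vanish identically, and the conclusion then holds trivially.
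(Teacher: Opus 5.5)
Your proof is correct and matches the paper's argument: you compute the bracket as $\frac{1}{(4\pi)^\nu}\binom{l+\nu-1}{\nu}\frac{\Gamma(1-\kappa)}{\Gamma(1-\kappa-\nu)}\,y^{-\kappa-\nu}$, note that Assumption~(i) holds with $c_0=0$ because $\kappa>0$, and note that all positive-index coefficients vanish, so the projection is zero. One harmless slip is your closing remark: for $\kappa>0$ the factor $\kappa(\kappa+1)\cdots(\kappa+\nu-1)$ never vanishes, so the bracket is never identically zero, but this does not affect the argument.
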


\begin{proof}
From~\eqref{eq:def:rankin-cohen} we deduce that
\begin{gather*}
  \big[
    y^{-\kappa},\,
    1
    \big]_\nu
  =
  \mfrac{1}{(4\pi)^\nu} \,
  \mbinom{l+\nu-1}{\nu}
  \mfrac{\Ga(1-\kappa)}{\Ga(1-\kappa-\nu)}
  y^{-\kappa-\nu}
  \tx{,}
\end{gather*}
which vanishes as~$y\ra\infty$, verifying \fref{Assumption}{it:def:holomorphic_projection:constant} in \fref{Definition}{def:holomorphic_projection}; \fref{Assumption}{it:def:holomorphic_projection:growth} is vacuous. 
\end{proof}
\subsection{Calculations of non-constant terms}%
\label{ssec:holomorphic_projection_fourier_coefficients:calculations_nonconstant_terms}

As in \fref{Lemma}{la:holomorphic_projection:fourier_term_constant}, we start with evaluating the integral in~\eqref{eq:def:holomorphic_projection}.

\begin{lemma}
\label{la:holomorphic_projection:fourier_term_nonconstant}
For half-integers~$k, l$ with~$l > 0$,~$k + l > 1$, integers~$\mu, \nu \ge 0$ with~$\mu \le \nu$, and rational numbers~$n < 0$,~$\td{n} > 0$ such that~$\td{n} > |n|$,
\begin{gather}
  \label{eq:holomorphic_projection:fourier_term_nonconstant}
  \begin{aligned}
      &
  \pi^\hol_{k+l+2\nu}\bigl(
  \Ga(1-k-\mu, 4 \pi |n| y)\,
  e\big( (n+\td{n}) \tau \big)
  \bigr)
  \\[.2\baselineskip]
  ={} &
  \mfrac{\Ga{(l+2\nu-\mu)}}{\Ga(k+l+2\nu)}\;
  \big( 1+ \tfrac{n}{\td{n}} \big)^{l+2\nu+k-1}
  \big| \tfrac{n}{\td{n}} \big|^{1-k-\mu}\,
  {_2F_1} \big(1,l+2\nu-\mu,k+l+2\nu;1+\tfrac{n}{\td{n}}\big)\;
  e\big( (n+\td{n}) \tau \big)
  \tx{,}
  \end{aligned}
\end{gather}
\end{lemma}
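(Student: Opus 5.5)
Write $K \defeq k+l+2\nu$ and $m \defeq n+\td{n} > 0$. The plan is to compute the single Fourier coefficient $c(m)$ in~\eqref{eq:def:holomorphic_projection} directly; the function is a single Fourier term, so no other coefficients appear. First check the hypotheses of \fref{Definition}{def:holomorphic_projection}. As $y\to\infty$, $\Ga(1-k-\mu,4\pi|n|y)$ decays exponentially, so the constant term vanishes and \fref{Assumption}{it:def:holomorphic_projection:constant} holds with $c_0=0$. As $y\to 0$, $\Ga(1-k-\mu,t)=\cO(t^{1-k-\mu})$ if $1-k-\mu<0$, and is bounded otherwise. In either case the integrand near $0$ behaves like $y^{K-2+1-k-\mu} = y^{l+2\nu-\mu-1}$, which is integrable because $l>0$ and $\mu\le\nu$. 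Hence the limit $s\to 0$ can be taken inside, and
\[
  c(m) = \mfrac{(4\pi m)^{K-1}}{\Ga(K-1)} \int_0^\infty \Ga(1-k-\mu,4\pi|n|y)\, e^{-4\pi m y}\, y^{K-2}\,\intrmd y .
\]

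Next, evaluate this Laplace-type integral of an incomplete Gamma function. Write $\Ga(a,t)=\int_t^\infty u^{a-1}e^{-u}\,\intrmd u$ with $a=1-k-\mu$, substitute $u=4\pi|n|y\,v$ with $v\ge1$, and swap the order of integration by Fubini; all integrands are positive, and convergence follows from $m+|n|v>0$. The inner $y$-integral becomes $\Ga(K+a-1)\,\bigl(4\pi(m+|n|v)\bigr)^{-(K+a-1)}$, where $K+a-1 = l+2\nu-\mu$. This leaves
\[
  (4\pi|n|)^{a}\,\Ga(l+2\nu-\mu)\int_1^\infty v^{a-1}\bigl(4\pi(m+|n|v)\bigr)^{-(l+2\nu-\mu)}\,\intrmd v .
\]
This is the standard formula $\int_0^\infty e^{-py}y^{s-1}\Ga(a,qy)\,\intrmd y = \frac{q^a\Ga(a+s)}{s(p+q)^{a+s}}\,{}_2F_1(1,a+s;s+1;\frac{p}{p+q})$ (Gradshteyn--Ryzhik 6.455.2) with $p=4\pi m$, $q=4\pi|n|$, $s=K-1$. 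I would quote this formula, and also indicate the Euler-integral derivation via the substitution $v=1/t$, which gives $\int_0^1 t^{K-2}(1-zt)^{-(l+2\nu-\mu)}\,\intrmd t$-type integrals, i.e.\ ${}_2F_1(\,\cdot\,,\,\cdot\,;K;\,\cdot\,)$.

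The main obstacle is matching the hypergeometric form. The reference formula yields ${}_2F_1\bigl(1,l+2\nu-\mu;K;\frac{m}{m+|n|}\bigr)$, with prefactor $\frac{|n|^{a}\Ga(l+2\nu-\mu)}{(K-1)(m+|n|)^{l+2\nu-\mu}}$ up to powers of $4\pi$. The statement instead has argument $1+\frac{n}{\td{n}} = \frac{m}{\td{n}}$. Since $m+|n| = \td{n}$ (because $n<0$), the two arguments agree, with $p+q=4\pi\td{n}$. The hypothesis $\td{n}>|n|$ ensures $0<z<1$, so the Gauss series converges. It then remains to collect factors:
\begin{itemize}
\item the powers of $4\pi$ cancel, since $(K-1)+a-(l+2\nu-\mu) = 0$;
\item $\frac{1}{\Ga(K-1)(K-1)} = \frac{1}{\Ga(K)}$;
\item $m^{K-1}\,|n|^{1-k-\mu}\,\td{n}^{-(l+2\nu-\mu)} = \bigl(\frac{m}{\td{n}}\bigr)^{K-1}\bigl|\frac{n}{\td{n}}\bigr|^{1-k-\mu}$, using $(K-1)+(1-k-\mu) = l+2\nu-\mu$.
\end{itemize}
This gives exactly~\eqref{eq:holomorphic_projection:fourier_term_nonconstant}. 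I expect the only delicate points to be the Fubini/convergence justification at $y\to0$ when $1-k-\mu<0$, and keeping the exponent bookkeeping straight.
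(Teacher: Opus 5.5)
Your proposal is correct and follows the paper's route: compute the single coefficient $c(n+\tilde n)$ from the defining integral, evaluate it with the Gradshteyn--Ryzhik formula for the Laplace transform of the upper incomplete Gamma function, and collect factors using $(n+\tilde n)+|n|=\tilde n$. Two small slips: the formula you wrote is (6.455.1), as in the paper, whereas (6.455.2) is the version for the lower incomplete Gamma function; and when $1-k-\mu>0$ the integrand near $y=0$ behaves like $y^{K-2}$ rather than $y^{l+2\nu-\mu-1}$, so integrability there, as well as the condition $K-1>0$ of the reference formula, relies on $k+l>1$, a hypothesis you did not invoke.
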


\begin{proof}
The defining expression for~$c(m)$ in~\eqref{eq:def:holomorphic_projection} yields
\begin{align*}
  c(n+\td{n})
   & =
  \mfrac{\big( 4 \pi (n+\td{n})\big)^{k+l+2\nu-1}}{\Ga(k+l+2\nu-1)}\,
  \int\limits_0^\infty \Ga(1-k-\mu,4\pi|n|y) e^{-4\pi(n+\td{n})y} y^{k+l+2\nu-2} \,\intrmd y
  \\
   & =
  \mfrac{\big( 4 \pi (n+\td{n})\big)^{k+l+2\nu-1}}{\Ga(k+l+2\nu-1)}
  \cdot
  \mfrac{(4\pi|n|)^{1-k-\mu}\, \Ga(l+2\nu-\mu)}
  {(k+l+2\nu-1)\, (4\pi \td{n})^{l+2\nu-\mu}}\,
  {_2F_1}\big(1,l+2\nu-\mu,k+l+2\nu;1+\tfrac{n}{\td{n}}\big)
\end{align*}
after rearranging, where the second line follows from~(6.455.1) of~\cite{gradshteyn-ryzhik-2007} with
\begin{gather*}
  \alpha \leftarrow 4 \pi |n|
  \tx{,}\;
  \beta \leftarrow 4 \pi (n + \td{n})
  \tx{,}\quad
  \mu \leftarrow k + l + 2\nu - 1
  \tx{,}\;
  \nu \leftarrow 1 - k - \mu
  \tx{.}
\end{gather*}
The assumptions of~(6.455.1) hold, since~$l > 0$,~$k + l > 1$, and~$\mu \le \nu$. The lemma follows after further simplification.
\end{proof}

\begin{proposition}
\label{prop:holomorphic_projection:fourier_term_rankin_cohen_nonconstant}
In addition to the assumptions of \fref{Lemma}{la:holomorphic_projection:fourier_term_nonconstant}, assume that~$k + l = 2$.
Then
\begin{multline*}
  \pi^\hol_{k+l+2\nu}\big( \big[
      \Ga(1-k, 4 \pi |n| y)
      e(n \tau),\,
      e(\td{n} \tau)
      \big]_\nu
  \big)
  =
  \sum_{\substack{t \in \frac{1}{2} + \ZZ \tx{ or} \\ t \in \{0,\ldots,\nu\}}}
  C_t\,
  |n|^t \td{n}^{\nu-t}\,
  e\big( (n+\td{n})\tau \big)
  \qquad\tx{with}
  \\
  C_t
  =
  (-1)^{2t}\,
  \Ga(1-k)\,
  \mbinom{k+\nu-1}{\nu-t}
  \mbinom{\nu-k+1}{t}
  \tx{.}
\end{multline*}
\end{proposition}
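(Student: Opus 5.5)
The plan follows the template of \fref{Proposition}{prop:holomorphic_projection:fourier_term_rankin_cohen_constant}. First expand the bracket and project term by term with \fref{Lemma}{la:holomorphic_projection:fourier_term_nonconstant}. Then collapse the resulting hypergeometric values with \fref{Lemma}{la:holomorphic_projection:hurwitz:hypergeometric}. Finally, collect powers of~$|n|$ and~$\td{n}$.

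\textbf{Step 1: expand the bracket.} Write~$F(\tau) = \Ga(1-k, 4\pi|n|y)\, e(n\tau)$. Using~$\partial_\tau y = -\frac{i}{2}$ and~$\partial_y \Ga(s,ay) = -a(ay)^{s-1}e^{-ay}$, I check by induction that
\begin{gather*}
  F^{(\mu)}
  =
  \sum_{j} a_{\mu,j}\, n^{\mu}\, \Ga(1-k-j, 4\pi|n|y)\, e(n\tau)
\end{gather*}
with explicit constants~$a_{\mu,j}$. A cleaner alternative is to note that~$F$ is, up to a constant, $\ov{g(-\ov\tau)}$-type data of the form $\int_{-\ov\tau}^{i\infty} e(-n w)(w+\tau)^{-k}\,\rmd w$. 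Differentiating under the integral gives
\begin{gather*}
  F^{(\mu)}
  =
  n^\mu \sum_{j=0}^{\mu} \mbinom{\mu}{j} (\ast)\, \Ga(1-k-j,\ldots)\, e(n\tau),
\end{gather*}
which uses~$\Ga(s+1,x) = s\Ga(s,x) + x^s e^{-x}$ to absorb the boundary terms. The other factor is simply~$e(\td n\tau)^{(\nu-\mu)} = \td{n}^{\nu-\mu} e(\td n \tau)$. Altogether, the bracket becomes a finite linear combination of terms~$\Ga(1-k-j,4\pi|n|y)\,e((n+\td n)\tau)$ with~$0\le j\le\nu$, weighted by powers~$n^{\mu}\td n^{\nu-\mu}$.

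\textbf{Step 2: project each term.} Apply \fref{Lemma}{la:holomorphic_projection:fourier_term_nonconstant} with~$\mu \leftarrow j$; its hypotheses hold since~$l>0$,~$k+l=2>1$ and~$\td n>|n|$. Because~$k+l=2$, the third argument of~${}_2F_1$ is~$2+2\nu$ and the second is~$l+2\nu-j = k+m$ with~$m = 2\nu - j + l - k$, an integer since $l - k = 2 - 2k \in \ZZ$. So \fref{Lemma}{la:holomorphic_projection:hurwitz:hypergeometric} applies with~$n\leftarrow 2\nu$ and~$z = 1+n/\td n$, hence~$1-z = |n|/\td n$. This turns each~${}_2F_1$ into
\begin{itemize}
\item a single term~$(|n|/\td n)^{1-k-m+2\nu}z^{-2\nu-1}$, and
\item a finite sum~$\sum_j \binom{k+m-1}{2\nu-j}(|n|/\td n)^j z^{-j-1}$.
\end{itemize}
Multiplying by the prefactor~$z^{1+2\nu}|n/\td n|^{1-k-j}$ from the lemma cancels the negative powers of~$z$, leaving polynomials in~$z$ times powers of~$|n|/\td n$. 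Expanding~$z = 1 - |n|/\td n$ binomially, everything becomes a combination of monomials~$|n|^t\td n^{\nu-t}$ after reinserting the prefactors~$n^\mu \td n^{\nu-\mu}$.

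The first kind of term produces exponents~$t$ shifted by~$1-k$, which is half-integral; these give the~$t\in\frac12+\ZZ$ contributions. The finite sums give integral~$t$.

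\textbf{Step 3: identify the coefficients.} This is the main obstacle. One must show that all integral exponents outside~$\{0,\ldots,\nu\}$ cancel, and that the surviving coefficients collapse to~$(-1)^{2t}\Ga(1-k)\binom{k+\nu-1}{\nu-t}\binom{\nu-k+1}{t}$. I would organize this by the exponent~$t$, obtaining for each~$t$ a double sum over~$\mu$ and~$j$ of products of binomials. I would then evaluate it with Vandermonde/Chu and Saalschütz-type identities, such as Gould's identity used in \fref{Proposition}{prop:holomorphic_projection:fourier_term_rankin_cohen_constant} or \cite{gould-1972} more generally.

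For the half-integral part the structure is simpler: each~$\mu$ contributes one term, so a single Vandermonde convolution should give~$\binom{k+\nu-1}{\nu-t}\binom{\nu-k+1}{t}$. Here~$\binom{\cdot}{t}$ for half-integral~$t$ is read via Gamma functions, and~$(-1)^{2t} = -1$ accounts for the sign from~$(-1)^{n+1}$ in \fref{Lemma}{la:holomorphic_projection:hurwitz:hypergeometric}.

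As a sanity check I would first verify~$\nu=0$ directly against \fref{Lemma}{la:holomorphic_projection:fourier_term_nonconstant} and \fref{Lemma}{la:holomorphic_projection:hurwitz:hypergeometric_intermediate}. There, with~$k=3/2$, the result should read~$\Ga(-\frac12)(1 - |n|^{1/2}\td n^{-1/2}\cdot\ldots)$. If the general cancellation resists direct manipulation, I would instead prove the identity for each fixed~$t$ as a polynomial identity in~$k$, checking enough values of~$k$ or using Zeilberger-style recurrences in~$\nu$.
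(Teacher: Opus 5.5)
Your outline is the paper's own: expand the bracket, project termwise with \fref{Lemma}{la:holomorphic_projection:fourier_term_nonconstant}, close each ${}_2F_1$ with \fref{Lemma}{la:holomorphic_projection:hurwitz:hypergeometric}, and collect powers. Applying the hypergeometric lemma directly at $z = 1-|n|/\td{n}$, without the Pfaff transformation the paper uses first, is also legitimate.

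However, Step~1 leaves the constants undetermined, and the structure you suggest is off. There are no boundary terms: in your integral representation the Wirtinger derivative $\partial_\tau$ does not see the lower limit $-\ov\tau$. Equivalently, $\Ga(s,x)-x^{s-1}e^{-x}=(s-1)\Ga(s-1,x)$ collapses the two pieces of $\partial_\tau$. Hence
\begin{gather*}
  F^{(\mu)}
  =
  (-1)^\mu \frac{\Ga(1-k)}{\Ga(1-k-\mu)}\, |n|^\mu\, \Ga(1-k-\mu, 4\pi|n|y)\, e(n\tau)
\end{gather*}
is a single term, not a binomial sum over $j$. These constants are load-bearing. Via the reflection formula they combine with the factor $\Ga(l+2\nu-\mu)\binom{k-1+\mu}{2\nu+1}^{-1}$ coming from the two lemmas to give exactly $\Ga(1-k)\binom{k+\nu-1}{\nu-\mu}\binom{\nu-k+1}{\mu}$.

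The more serious problem is that your parity bookkeeping is reversed, so Step~3 is aimed at the wrong part.
\begin{itemize}
\item The leading term gives the integral exponents. Since $1-(l+2\nu-\mu)+2\nu = k-1+\mu$ when $k+l=2$, the leading term of the hypergeometric lemma is $(|n|/\td{n})^{k-1+\mu}z^{-2\nu-1}$. This cancels the prefactor $z^{2\nu+1}|n/\td{n}|^{1-k-\mu}$ exactly. Multiplied by $|n|^\mu\td{n}^{\nu-\mu}$, it gives the exponent $t=\mu\in\{0,\ldots,\nu\}$. So $C_t$ for integral $t$ is read off with no summation, and no out-of-range integral exponents ever arise that would need to cancel.
\item The finite sums give the half-integral exponents. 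With $r=|n|/\td{n}$, they produce terms proportional to $\td{n}^\nu r^{1-k+i}(1-r)^{2\nu-i}$, i.e.\ all exponents lie in $1-k+\ZZ=\frac12+\ZZ$. Each such coefficient is a genuine double sum over $\mu$ and the inner index. It is not one term per $\mu$ closed by a single Vandermonde convolution.
\end{itemize}

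Evaluating that double sum is the actual content of the proposition, and your proposal does not supply it. The paper first applies Pfaff, so the inner terms become $\binom{k+\mu-1}{j}r^{1-k}(r-1)^j$. It then swaps the sums and uses $\binom{\alpha}{\beta}\binom{\beta}{\gamma}=\binom{\alpha}{\gamma}\binom{\alpha-\gamma}{\beta-\gamma}$ together with Vandermonde to reduce the $\mu$-sum to $\binom{k+\nu-1}{j}\binom{2\nu-j}{\nu}$. Finally, the same product identity and Riordan's alternating identity reduce the $j$-sum to $-\binom{k+\nu-1}{t+k-1}\binom{\nu-k+1}{\nu-t-k+1}$.

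Your fallback of verifying a polynomial identity in $k$ would have to fix the integer $t+k-1$ rather than $t$. As stated, it is not yet an argument.
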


\begin{proof}
If
$
  p(\tau)
  \defeq
  \Ga(1-k,4\pi|n|y)\, e(n\tau)
  \tx{,}
$
one can show that (see Lemma 4.5 of~\cite{mertens-2016}, for example)
\begin{gather*}
  p^{(\mu)}(\tau)
  =
  (-1)^\mu \mfrac{\Ga(1-k)}{\Ga(1-k-\mu)}\,
  |n|^\mu\,
  \Ga(1-k-\mu,4\pi|n|y)\, e(n\tau)
  \tx{,}
\end{gather*}
where~$p^{(\mu)}$ is the derivative of~$p$ as in \fref{Definition}{def:rankin-cohen}.
Thus, by the definition of Rankin--Cohen brackets in~\eqref{eq:def:rankin-cohen} and \fref{Lemma}{la:holomorphic_projection:fourier_term_nonconstant}, we have to evaluate
\begin{align}
  \nonumber
      &
  \pi^\hol_{k+l+2\nu}\big( \big[
      p(\tau),\,
      e(\td{n} \tau)
      \big]_\nu
  \big)
  \\
  \nonumber
  ={} &
  \sum_{\mu=0}^\nu (-1)^\mu
  \mbinom{k+\nu-1}{\nu-\mu}
  \mbinom{l+\nu-1}{\mu}\;
  \pi^\hol_{k+l+2\nu}\big(
  p^{(\mu)}(\tau) \cdot \td{n}^{\nu-\mu} e(\td{n}\tau)
  \big)
  \\
  \nonumber
  ={} &
  \sum_{\mu=0}^\nu
  \mfrac{\Ga(1-k)}{\Ga(1-k-\mu)}
  \mbinom{k+\nu-1}{\nu-\mu}
  \mbinom{l+\nu-1}{\mu}\,
  |n|^\mu\td{n}^{\nu-\mu}\;
  \pi^\hol_{k+l+2\nu}\big(\Ga(1-k-\mu,4\pi|n|y)\, e\bigl((n+\td{n})\tau\bigr)\big)
  \\
  \label{eq:prf:prop:holomorphic_projection:fourier_term_rankin_cohen_nonconstant:first_expansion}
  ={} &
  \sum_{\mu=0}^\nu
  \mfrac{\Ga(1-k)}{\Ga(1-k-\mu)}\,
  \mbinom{k+\nu-1}{\nu-\mu}
  \mbinom{l+\nu-1}{\mu}\,
  |n|^\mu \td{n}^{\nu-\mu}\;
  C(\mu; n,\td{n})\;
  e\big((n+\td{n})\tau\big)
  \tx{,}
\end{align}
where
\begin{gather*}
  C(\mu,n,\td{n})
  =
  \mfrac{\Ga{(l+2\nu-\mu)}}{\Ga(k+l+2\nu)}\;
  \big( 1+ \tfrac{n}{\td{n}} \big)^{l+2\nu+k-1}
  \big| \tfrac{n}{\td{n}} \big|^{1-k-\mu}\,
  {_2F_1} \big(1,l+2\nu-\mu,k+l+2\nu;1+\tfrac{n}{\td{n}}\big)
\end{gather*}
is the constant appearing in \fref{Lemma}{la:holomorphic_projection:fourier_term_nonconstant}. Since~$n < 0$, we can replace~$n$ by~$-|n|$ and work with positive~$|n|$ and~$\td{n}$ in the proof.

First, we deal with the hypergeometric terms appearing in~$C(\mu,n,\td{n})$.
Using that~$k+l=2$, we apply~15.8.1 from~\cite{nist-dlmf-1-2-4} to write
\begin{gather*}
  {_2F_1}\big(1,l+2\nu-\mu,2+2\nu;\, 1-\tfrac{|n|}{\td{n}}\big)
  =
  \tfrac{\td{n}}{|n|}
  \cdot
  {_2F_1}\big(1,k+\mu,2+2\nu;\, 1-\tfrac{\td{n}}{|n|}\big)
  \tx{.}
\end{gather*}
The right-hand side can be described by \fref{Lemma}{la:holomorphic_projection:hurwitz:hypergeometric} with~$m \leftarrow \mu$,~$n \leftarrow 2 \nu$, and~$z \leftarrow 1 - \frac{|n|}{\td{n}}$. We have~$|1 - z| < 1$, since~$\td{n} > |n|$.
The leading sign in \fref{Lemma}{la:holomorphic_projection:hurwitz:hypergeometric} is negative, and we absorb it into the main factor there. Further, we combine the inverse binomial coefficient that appears in \fref{Lemma}{la:holomorphic_projection:hurwitz:hypergeometric} with the binomial coefficients in~\eqref{eq:prf:prop:holomorphic_projection:fourier_term_rankin_cohen_nonconstant:first_expansion}.
Hence, to prove the proposition, we have to match coefficients in
\begin{align*}
      &
  \sum_{t\in\frac{1}{2} \ZZ}
  C_t\,
  |n|^t \td{n}^{\nu-t}\,
  \\
  ={} &
  \Ga(1-k)
  \sum_{\mu=0}^\nu
  \mbinom{k+\nu-1}{\nu-\mu}
  \mbinom{1-k+\nu}{\mu}
  \big(1-\tfrac{|n|}{\td{n}}\big)^{2\nu+1}
  |n|^{-k}
  \td{n}^{\nu+k}\;
  \Bigg(
  -\mfrac{\big( \frac{\td{n}}{|n|} \big) ^ { 1-k-\mu+2\nu }}
  {\big( 1 - \frac{\td{n}}{|n|}\big)^{2\nu+1}}
  +
  \sum_{j=0}^{2\nu}
  \mbinom{k+\mu-1}{2\nu-j}
  \mfrac{\big( \frac{\td{n}}{|n|} \big)^{j}}
  {\big( 1 - \frac{\td{n}}{|n|}\big)^{j+1}}
  \Bigg)
  \\
  ={} &
  \Ga(1-k)
  \td{n}^\nu
  \sum_{\mu=0}^\nu
  \mbinom{k+\nu-1}{\nu-\mu}
  \mbinom{1-k+\nu}{\mu}
  \Big(
  \big(\tfrac{|n|}{\td{n}}\big)^\mu
  -
  \sum_{j=0}^{2\nu}
  \mbinom{k+\mu-1}{j}
  \big(\tfrac{|n|}{\td{n}}\big)^{1-k}
  \big(\tfrac{|n|}{\td{n}}-1\big)^j
  \Big)
  \tx{,}
\end{align*}
where we have canceled out powers of~$|n|$ and~$\td{n}$, grouped together powers of~$1-\frac{\td{n}}{|n|}$ and~$1-\frac{|n|}{\td{n}}$, and replaced~$j$ by~$2\nu-j$.

To handle the sum over~$\mu$, we set~$y=\frac{|n|}{\td{n}}$. The problem now reduces to finding the coefficients~$C'_t = C_t \slash \Ga(1-k)$ of
\begin{gather}
  \label{eq:prop:holomorphic_projection:fourier_term_rankin_cohen_nonconstant:polynomial}
  \sum_{t \in \frac{1}{2}\ZZ} C'_t\, y^t
  =
  \sum_{\mu=0}^\nu
  \mbinom{k+\nu-1}{\nu-\mu}
  \mbinom{1-k+\nu}{\mu}
  \Bigg(
  y^\mu
  -
  \sum_{j=0}^{2\nu}
  \mbinom{k+\mu-1}{j}
  y^{1-k}
  (y-1)^j
  \Bigg)
  \tx{.}
\end{gather}
Observe that~$\mu$ is an integer, while the sum over~$j$ contains only half-integer powers of~$y$. Hence the integer powers of~$y$ have coefficients
\begin{gather}
  \label{eq:prop:holomorphic_projection:fourier_term_rankin_cohen_nonconstant:c_half_integral}
  C'_t
  =
  \mbinom{k+\nu-1}{\nu-t}
  \mbinom{1-k+\nu}{t}
  \tx{,}
  \quad
  \tx{for } 0\le t\le \nu,
  t \in \ZZ
  \tx{.}
\end{gather}
Note that if~$t \in \ZZ$ and~$t < 0$ or~$t > \nu$, we have~$C'_t = 0$.

As for half-integer powers of~$y$, this requires evaluating a double sum; expanding the sum over~$j$ and collecting powers of~$y$ yields
\begin{align*}
  C'_t
   & =
  -\sum_{\mu=0}^\nu
  \mbinom{k+\nu-1}{\nu-\mu}
  \mbinom{1-k+\nu}{\mu}\,
  \sum_{j=t+k-1}^{2\nu}
  (-1)^{t-j+k-1}
  \mbinom{k+\mu-1}{j}
  \mbinom{j}{t+k-1}
  \\
   & =
  -\sum_{j=t+k-1}^{2\nu}
  (-1)^{t-j+k-1}
  \mbinom{j}{t+k-1}\,
  \sum_{\mu=0}^\nu
  \mbinom{k+\nu-1}{\nu-\mu}
  \mbinom{1-k+\nu}{\mu}
  \mbinom{k+\mu-1}{j}
  \tx{,}
  \qquad
  \tx{for } t \in \tfrac{1}{2} + \ZZ
  \tx{,}
\end{align*}
after switching sums. This crucially lets us apply the Vandermonde convolution and the identity
\begin{gather}
  \label{eq:cor:holomorphic_projection:fourier_term_rankin_cohen_nonconstant:binomial_product}
  \mbinom{\alpha}{\beta}
  \mbinom{\beta}{\gamma}
  =
  \mbinom{\alpha}{\gamma}
  \mbinom{\alpha-\gamma}{\beta-\gamma}
\end{gather}
to simplify the sum over~$\mu$:
\begin{align*}
                                                                                                               &
  \sum_{\mu=0}^\nu
  \mbinom{k+\nu-1}{\nu-\mu}
  \mbinom{1-k+\nu}{\mu}
  \mbinom{k+\mu-1}{j}
  =
  \sum_{\mu=0}^\nu
  \mbinom{k+\nu-1}{k+\mu-1}
  \mbinom{1-k+\nu}{\mu}
  \mbinom{k+\mu-1}{j}
  \\
  \underset{\eqref{eq:cor:holomorphic_projection:fourier_term_rankin_cohen_nonconstant:binomial_product}}{=}{} &
  \mbinom{k+\nu-1}{j}
  \sum_{\mu=0}^\nu
  \mbinom{k+\nu-1-j}{k+\mu-1-j}
  \mbinom{1-k+\nu}{\mu}
  =
  \mbinom{k+\nu-1}{j}
  \sum_{\mu=0}^\nu
  \mbinom{k+\nu-1-j}{\nu-\mu}
  \mbinom{1-k+\nu}{\mu}
  =
  \mbinom{k+\nu-1}{j}
  \mbinom{2\nu-j}{\nu}
  \tx{,}
\end{align*}
where we apply the Vandermonde identity in the last equality.
Summing over~$j$ then gives
\begin{align*}
  C'_t
   &
  \underset{\hphantom{\eqref{eq:cor:holomorphic_projection:fourier_term_rankin_cohen_nonconstant:binomial_product}}}{=}
  -\sum_{j=t+k-1}^{2\nu}
  (-1)^{t+k-1-j}
  \mbinom{j}{t+k-1}
  \mbinom{k+\nu-1}{j}
  \mbinom{2\nu-j}{\nu}
  \\
   &
  \underset{\eqref{eq:cor:holomorphic_projection:fourier_term_rankin_cohen_nonconstant:binomial_product}}{=}
  -\mbinom{k+\nu-1}{t+k-1}
  \sum_{j=t+k-1}^{2\nu}
  (-1)^{t+k-1-j}
  \mbinom{k+\nu-1-t-k+1}{j-t-k+1}
  \mbinom{2\nu-j}{\nu}
  \\
   &
  \underset{\hphantom{\eqref{eq:cor:holomorphic_projection:fourier_term_rankin_cohen_nonconstant:binomial_product}}}{=}
  -\mbinom{k+\nu-1}{t+k-1}
  \sum_{j=0}^{2\nu-t-k+1}
  (-1)^j
  \mbinom{2\nu-t-k+1-j}{\nu}
  \mbinom{\nu-t}{j}
  =
  -\mbinom{k+\nu-1}{t+k-1}
  \mbinom{\nu-k+1}{\nu-t-k+1}
  \tx{,}
\end{align*}
where the last equality follows from identity~(5a) in~\cite{riordan-1968}, given below:
\begin{gather*}
  \sum_{k=0}^n
  (-1)^k
  \mbinom{n-k}{m}
  \mbinom{p}{k}
  =
  \mbinom{n-p}{n-m}
\end{gather*}
with~$k$,~$m$,~$n$, and~$p$ the variables from reference~\cite{riordan-1968}.

Thus, we have determined~$C'_t$ for all~$t \in \frac{1}{2} \ZZ$, finishing the proof.
\end{proof}
\subsection{Calculations specific to Hurwitz class numbers}%
\label{ssec:holomorphic_projection_fourier_coefficients:hurwitz_class_numbers}

We now set~$k = \frac{3}{2}$,~$l = \frac{1}{2}$, and~$\kappa = \frac{1}{2}$, which further simplifies the expressions in \fref{Proposition}{prop:holomorphic_projection:fourier_term_rankin_cohen_constant} and \fref{Proposition}{prop:holomorphic_projection:fourier_term_rankin_cohen_nonconstant}.

\begin{lemma}
\label{la:holomorphic_projection_fourier_coefficients:hurwitz_class_numbers:simplification}
For an integer~$\nu \ge 0$, and rational numbers~$n < 0$ and~$\td{n} > 0$ such that~$\td{n} > |n|$,
\begin{align*}
  \pi^\hol_{2+2\nu}\big( \big[
      y^{-\frac{1}{2}},\,
      e(\td{n} \tau)
      \big]_\nu
  \big)
   & =
  2\pi
  \mbinom{\nu-\frac{1}{2}}{\nu}\,
  \big( \sqrt{\td{n}} \big)^{2\nu+1}
  \,
  e(\td{n} \tau)
  \tx{,}
  \\
  \pi^\hol_{2+2\nu}\big( \big[
      \Ga\big(-\tfrac{1}{2}, 4 \pi |n| y\big)
      e(n \tau),\,
      e(\td{n} \tau)
      \big]_\nu
  \big)
   & =
  2\sqrt{\pi}
  \mbinom{\nu-\frac{1}{2}}{\nu}\,
  \frac{1}{\sqrt{|n|}}
  \big(
  \sqrt{\td{n}}
  -
  \sqrt{|n|}
  \big)
  ^{2\nu+1}
  \,
  e\big( (n+\td{n})\tau \big)
  \tx{.}
\end{align*}
\end{lemma}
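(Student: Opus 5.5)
The plan is to specialize the two general results, \fref{Proposition}{prop:holomorphic_projection:fourier_term_rankin_cohen_constant} and \fref{Proposition}{prop:holomorphic_projection:fourier_term_rankin_cohen_nonconstant}, to $k=\frac{3}{2}$, $l=\frac{1}{2}$, $\kappa=\frac{1}{2}$. After that, everything reduces to simplifying Gamma and binomial expressions. First I check the hypotheses. We have $\kappa=\frac12\le k+l-1+2\nu=1+2\nu$, and $l>0$, $k+l=2>1$. The hypothesis $\td n>|n|$ is assumed in the statement. All binomial coefficients with non-integral entries are read as quotients of Gamma values.

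For the first identity I insert these values into~\eqref{eq:prop:holomorphic_projection:fourier_term_rankin_cohen_constant}. The factor $\binom{k+\nu-1-\kappa}{\nu}=\binom{\nu}{\nu}$ equals~$1$. Writing the remaining binomials via Gamma gives
\begin{gather*}
  C
  =
  2\sqrt{\pi}\,
  \mfrac{\Ga(2\nu+\frac12)}{\Ga(2\nu+1)}\,
  \mfrac{\Ga(2\nu+1)}{\nu!\,\nu!}\,
  \mfrac{\nu!\,\Ga(\nu+\frac12)}{\Ga(2\nu+\frac12)}
  =
  2\sqrt{\pi}\,\mfrac{\Ga(\nu+\frac12)}{\nu!}
  \tx{.}
\end{gather*}
Since $\binom{\nu-\frac12}{\nu}=\Ga(\nu+\frac12)/(\Ga(\frac12)\,\nu!)$ and $\Ga(\frac12)=\sqrt\pi$, this equals $2\pi\binom{\nu-\frac12}{\nu}$. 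The power of $\td n$ is $\td n^{\kappa+\nu}=(\sqrt{\td n})^{2\nu+1}$, which is the first claim.

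For the second identity I use \fref{Proposition}{prop:holomorphic_projection:fourier_term_rankin_cohen_nonconstant} with $\Ga(1-k)=\Ga(-\frac12)=-2\sqrt\pi$. This gives
\begin{gather*}
  C_t=-(-1)^{2t}\,2\sqrt\pi\,\mbinom{\nu+\frac12}{\nu-t}\mbinom{\nu-\frac12}{t}
  \tx{.}
\end{gather*}
The indices $t$ run over $\{0,\dots,\nu\}$ and over $\frac12+\ZZ$. For half-integral $t$, the coefficient vanishes outside $-\frac12\le t\le \nu+\frac12$, because $\binom{\nu+\frac12}{\nu-t}$ has an integral lower entry that is then negative. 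Next I expand the target by the binomial theorem:
\begin{gather*}
  \frac{1}{\sqrt{|n|}}\big(\sqrt{\td n}-\sqrt{|n|}\big)^{2\nu+1}
  =
  \sum_{s=0}^{2\nu+1}(-1)^s\mbinom{2\nu+1}{s}|n|^{\frac{s-1}{2}}\,\td n^{\nu-\frac{s-1}{2}}
  \tx{.}
\end{gather*}
Setting $t=\frac{s-1}{2}$, so that $t$ runs over $-\frac12,0,\frac12,\dots,\nu$, matches exactly the monomials $|n|^t\td n^{\nu-t}$. Since $(-1)^{s}=-(-1)^{2t}$, comparing coefficients reduces the claim to the identity
\begin{gather*}
  \mbinom{\nu+\frac12}{\nu-t}\mbinom{\nu-\frac12}{t}
  =
  \mbinom{\nu-\frac12}{\nu}\mbinom{2\nu+1}{2t+1}
  \qquad\tx{for } t\in\{-\tfrac12,0,\tfrac12,\dots,\nu\}
  \tx{.}
\end{gather*}
I also need that $C_t=0$ for $t=\nu+\frac12$. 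This holds because $\binom{\nu-\frac12}{\nu+\frac12}$ vanishes: its numerator $\Ga(\nu+\frac12)$ is finite while $\Ga(0)$ sits in its denominator.

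The main obstacle is proving this binomial identity uniformly in integral and half-integral $t$. I would write every factor via Gamma functions. The left side is
\begin{gather*}
  \frac{\Ga(\nu+\frac32)\,\Ga(\nu+\frac12)}{\Ga(\nu-t+1)\,\Ga(t+\frac32)\,\Ga(t+1)\,\Ga(\nu-t+\frac12)}
  \tx{.}
\end{gather*}
The right side is
\begin{gather*}
  \frac{\Ga(\nu+\frac12)}{\sqrt\pi\,\nu!}\cdot\frac{(2\nu+1)!}{\Ga(2t+2)\,\Ga(2\nu-2t+1)}
  \tx{.}
\end{gather*}
Then I apply Legendre's duplication formula $\Ga(x)\Ga(x+\frac12)=2^{1-2x}\sqrt\pi\,\Ga(2x)$ to three pairs: $\Ga(t+1)\Ga(t+\frac32)$, $\Ga(\nu-t+\frac12)\Ga(\nu-t+1)$, and $\Ga(\nu+1)\Ga(\nu+\frac32)$, the last one converting $(2\nu+1)!/\nu!$. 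The powers of $2$ and the factors of $\sqrt\pi$ should cancel, and both sides reduce to the same expression. As a sanity check I would compare both sides at $t=-\frac12$ and $t=0$ for $\nu=0$ and $\nu=1$.
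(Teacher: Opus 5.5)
Your proposal is the paper's proof. You specialize the two propositions to $k=\tfrac32$, $l=\tfrac12$, $\kappa=\tfrac12$. You reduce the second claim to the identity $\binom{\nu+\frac12}{\nu-t}\binom{\nu-\frac12}{t}=\binom{\nu-\frac12}{\nu}\binom{2\nu+1}{2t+1}$ and prove it by Legendre duplication. You finish with the binomial theorem in $u=2t+1$. Your evaluation of the constant in the first claim, where things cancel even without duplication, and your sign bookkeeping $(-1)^{2t+1}=-(-1)^{2t}$ are both correct.

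One justification is wrong. It concerns the vanishing of $C_t$ for half-integral $t$ outside $[-\tfrac12,\nu-\tfrac12]$.
\begin{itemize}
\item For half-integral $t$, the lower entry $\nu-t$ of $\binom{\nu+\frac12}{\nu-t}$ is half-integral, not integral.
\item For $t\ge\nu+\tfrac32$ this factor equals $\Gamma(\nu+\tfrac32)/\bigl(\Gamma(\nu-t+1)\,\Gamma(t+\tfrac32)\bigr)\neq 0$, so it cannot give the vanishing there.
\item The correct mechanism for small $t$: writing $\binom{\nu+\frac12}{\nu-t}=\binom{\nu+\frac12}{t+\frac12}$, it vanishes for $t\le-\tfrac32$ because of the pole of $\Gamma(t+\tfrac32)$.
\item For large $t$, it is the other factor $\binom{\nu-\frac12}{t}$ that vanishes for all $t\ge\nu+\tfrac12$, because of the pole of $\Gamma(\nu-t+\tfrac12)$. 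This is the mechanism you invoked only at $t=\nu+\tfrac12$.
\end{itemize}

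The cleanest repair is to organize things as the paper does. Your duplication computation, read with $1/\Gamma$ as an entire function, proves the binomial identity for every $t\in\tfrac12\ZZ$, not just for $t\in\{-\tfrac12,0,\ldots,\nu\}$. All the required vanishing then follows at once from $\binom{2\nu+1}{2t+1}=0$ for $2t+1\notin\{0,\ldots,2\nu+1\}$.
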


\begin{proof}
For the first claim, \fref{Proposition}{prop:holomorphic_projection:fourier_term_rankin_cohen_constant} reduces to
\begin{gather*}
  \pi^\hol_{2+2\nu}\big( \big[
      y^{-\frac{1}{2}}
      ,\,
      e(\td{n}\tau)
      \big]_\nu
  \big)
  =
  2 \sqrt{\pi}
  \mfrac{\Ga(2\nu+\frac{1}{2})}{\Ga(2\nu+1)}\,
  \mbinom{2\nu}{\nu}
  \mbinom{2\nu-\frac{1}{2}}{\nu}^{-1}
  \td{n}^{\nu + \frac{1}{2}}\,
  e(\td{n}\tau)
  \tx{.}
\end{gather*}
We write the binomial coefficients in terms of the~$\Gamma$\nbd{}function and use the Legendre duplication formula to confirm the first formula.

Using the Legendre duplication formula again, we find that
\begin{gather*}
  \mbinom{\nu+\frac{1}{2}}{\nu-t}
  \mbinom{\nu-\frac{1}{2}}{t}
  =
  \mbinom{\nu-\frac{1}{2}}{\nu}
  \mbinom{2\nu+1}{2t+1}
  \tx{.}
\end{gather*}
Inserting this into \fref{Proposition}{prop:holomorphic_projection:fourier_term_rankin_cohen_nonconstant} gives
\begin{gather*}
  \pi^\hol_{2+2\nu}\big( \big[
      \Ga\big(-\tfrac{1}{2}, 4 \pi |n| y\big)
      e(n \tau),\,
      e(\td{n} \tau)
      \big]_\nu
  \big)
  =
  \Ga\big( -\tfrac{1}{2} \big)
  \sum_{\substack{t \in \frac{1}{2} + \ZZ \tx{ or} \\ t \in \{0,\ldots,\nu\}}}
  (-1)^{2t}
  \mbinom{\nu-\frac{1}{2}}{\nu}
  \mbinom{2\nu+1}{2t+1}\,
  |n|^t \td{n}^{\nu-t}\,
  e\big( (n+\td{n})\tau \big)
  \tx{.}
\end{gather*}
For ease of notation, we set~$u = 2t + 1$. Note that the second binomial coefficient in the sum vanishes if~$u < 0$ or~$u > 2 \nu + 1$. Hence,
\begin{gather*}
  2\sqrt{\pi}
  \mbinom{\nu-\frac{1}{2}}{\nu}
  \sum_{u=0}^{2\nu+1}
  (-1)^u
  \mbinom{2\nu+1}{u}\,
  \Big(
  \mfrac{|n|}{\td{n}}
  \Big)
  ^{\frac{u-1}{2}}
  \td{n}^\nu
  =
  2\sqrt{\pi}
  \mbinom{\nu-\frac{1}{2}}{\nu}\,
  \Big(
  1
  -
  \Big(
  \mfrac{|n|}{\td{n}}
  \Big)^{\frac{1}{2}}
  \Big)^{2\nu+1}
  \Big(
  \mfrac{|n|}{\td{n}}
  \Big)^{-\frac{1}{2}}
  \td{n}^{\nu}
\end{gather*}
implies the claim.
\end{proof}

We now combine the previous Lemmas and Propositions to obtain expressions for the holomorphic projection of~$\big[ E^{\rmv\rmv\,+}_{\frac{3}{2}}, \theta^{\rmv\rmv} \big]_\nu^\otimes$ and~$\big[ E^{\rmv\rmv\,-}_{\frac{3}{2}}, \theta^{\rmv\rmv} \big]_\nu^\otimes$.
Recall the order of components of the vector-valued modular form for~$\ov\rhoH \otimes \rhoH$ that we specified in \fref{Section}{ssec:vector_valued_modular_forms:tensor_products}, when defining the tensor product.

\begin{proposition}
\label{prop:holomorphic_projection:hurwitz:holomorphic_part}
For an integer~$\nu \ge 0$,
\begin{gather*}
  \pi^\hol_{2 + 2\nu}\big(
  \big[ E^{\rmv\rmv\,+}_{\frac{3}{2}}, \theta^{\rmv\rmv} \big]_\nu^\otimes
  \big)
  =
  \big[ E^{\rmv\rmv\,+}_{\frac{3}{2}}, \theta^{\rmv\rmv} \big]_\nu^\otimes
  =
  \Big(
  \sum_{N = 0}^\infty
  c^+_i(N)\,
  e
  \big(
  \tfrac{N}{4}
  \tau
  \big)
  \Big)
  _
  {1\leq i\leq 4}
  \tx{,}
\end{gather*}
with coefficients
\begin{alignat*}{2}
  c^+_1(N)
   & =
  \sum_{\mu=0}^\nu
  r_\mu\;
  \sum_{m^2\leq N \slash 4}
  \mspace{-6mu}
  H(N-4m^2)\,
  \big(N- 4 m^2 \big)^\mu\,
  \big(4 m^2 \big)^{\nu-\mu}
  \tx{,}\qquad
   &   &
  \tx{if\/ } N \equiv 0 \;\pmod{4}
  \tx{;}
  \\
  c^+_2(N)
   & =
  \sum_{\mu=0}^\nu
  r_\mu\;
  \sum_{m^2\leq N}
  H(N-m^2)\,
  \big(
  N-m^2
  \big)^\mu\,
  \big(
  m^2
  \big)
  ^{\nu-\mu}
  \tx{,}\qquad
   &   &
  \tx{if\/ } N \equiv 1 \;\pmod{4}
  \tx{;}
  \\
  c^+_3(N)
   & =
  \sum_{\mu=0}^\nu
  r_\mu\;
  \sum_{m^2\leq N}
  H(N-m^2)\,
  \big(
  N-m^2
  \big)^\mu\,
  \big(
  m^2
  \big)
  ^{\nu-\mu}
  \tx{,}\qquad
   &   &
  \tx{if\/ } N \equiv 3 \;\pmod{4}
  \tx{;}
  \\
  c^+_4(N)
   & =
  \mathmakebox[0pt][l]{
    \sum_{\mu=0}^\nu
    r_\mu\;
    \sum_{m^2+m+1\leq N \slash 4}
    \mspace{-18mu}
    H(N-4m^2-4m-1)\,
    \big(
    N-4m^2-4m-1
    \big)^\mu\,
    \big(
    4m^2+4m+1
    \big)^{\nu-\mu}
    \tx{,}
  }
  \\
   &   &   &
  \tx{if\/ } N \equiv 0 \;\pmod{4}
  \tx{;}
  \mspace{100mu} %
\end{alignat*}
and~$c^+_i(N) = 0$ in the remaining cases, where
\begin{gather*}
  r_\mu
  =
  \mfrac{(-1)^\mu}{4^\nu}\,
  \mbinom{\nu+\frac{1}{2}}{\nu-\mu}
  \mbinom{\nu-\frac{1}{2}}{\mu}
  \tx{.}
\end{gather*}
\end{proposition}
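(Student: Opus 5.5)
The plan is to split the statement into two parts. The first equality says that $\pi^\hol_{2+2\nu}$ fixes the bracket; the second is an explicit computation of Fourier coefficients, done componentwise.

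\emph{First equality.} The function $E^{\rmv\rmv\,+}_{\frac{3}{2}}$ has components that are holomorphic $q$\nbd{}series with coefficients $H(n)$, and $\theta^{\rmv\rmv}$ is holomorphic. So every entry of $\big[ E^{\rmv\rmv\,+}_{\frac{3}{2}}, \theta^{\rmv\rmv} \big]_\nu^\otimes$ is a holomorphic function on $\HS$, given by a $q$\nbd{}series in $e(\tau/4)$ with nonnegative exponents. I will check the hypotheses of \fref{Definition}{def:holomorphic_projection}:
\begin{enumerateroman}
\item
the constant term is $c_0$ with an exponentially decaying error as $y \ra \infty$;
\item
the Fourier coefficients do not depend on $y$, so the growth condition as $y \ra 0$ is trivial.
\end{enumerateroman}
Proposition~4 of~\cite{imamoglu-raum-richter-2014}, recalled after \fref{Definition}{def:holomorphic_projection}, then gives $\pi^\hol_{2+2\nu}(f) = f$ for holomorphic $f$. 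This settles the first equality.

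\emph{Coefficients.} By \fref{Definition}{def:vector_valued_modular_forms:tensor_rankin-cohen} and \fref{Definition}{def:vector_valued_modular_forms:tensor_products}, the components in order are the scalar brackets $[E_1,\theta_1]_\nu$, $[E_1,\theta_2]_\nu$, $[E_2,\theta_1]_\nu$, $[E_2,\theta_2]_\nu$. Here $E_1$, $E_2$ are the even and odd class number series, and $\theta_1$, $\theta_2$ are the theta series over $m \in 2\ZZ$ and $m \in 1+2\ZZ$. By bilinearity it suffices to compute
\[
  \big[ e(a\tau), e(b\tau) \big]_\nu
  =
  \sum_{\mu=0}^{\nu} (-1)^\mu
  \mbinom{\nu+\frac{1}{2}}{\nu-\mu}
  \mbinom{\nu-\frac{1}{2}}{\mu}\,
  a^\mu b^{\nu-\mu}\,
  e\big( (a+b)\tau \big).
\]
This uses $e(a\tau)^{(\mu)} = a^\mu e(a\tau)$ and $k = \frac{3}{2}$, $l = \frac{1}{2}$. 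Taking $a = D/4$ and $b = m^2/4$ extracts the factor $4^{-\nu}$, which produces exactly $r_\mu\, D^\mu (m^2)^{\nu-\mu}$.

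I then collect terms with $D + m^2 = N$ in each component:
\begin{itemize}
\item
Component~1: $D \equiv 0 \pmod{4}$ and $m = 2m'$, so $N \equiv 0 \pmod{4}$ and $D = N - 4m'^2$.
\item
Component~2: $D \equiv 0$ and $m$ odd, so $N \equiv 1 \pmod{4}$.
\item
Component~3: $D \equiv 3$ and $m$ even, so $N \equiv 3 \pmod{4}$.
\item
Component~4: $D \equiv 3$ and $m = 2m'+1$, so $N \equiv 0 \pmod{4}$ and $D = N - 4m'^2 - 4m' - 1$.
\end{itemize}
Summing over all $m \in \ZZ$ gives each stated formula, with the summation range $m^2 \le N/4$ (respectively $m^2 + m + 1 \le N/4$) coming from $H(D) = 0$ for $D < 0$.

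In components~2 and~3 the stated sums run over all $m$ with $m^2 \le N$, rather than over a fixed parity of $m$. The extra terms vanish: they have $N - m^2 \equiv 2$ or $1 \pmod{4}$, and $H$ is zero there because $-(N-m^2)$ is not a discriminant. The same congruence argument shows that every other residue of $N$ modulo~$4$ receives no contributions, which gives $c^+_i(N) = 0$ in the remaining cases.

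\emph{Main obstacle.} The only real care needed is bookkeeping: the index convention of the tensor product, since $\ov\rhoH$ is the first factor, and the parity and range conditions, including the boundary terms where $D = 0$. These must be tracked correctly so that the summation ranges match the statement exactly.
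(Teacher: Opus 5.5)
Your proposal is correct and follows the paper's proof. Both expand the bracket termwise using $e(a\tau)^{(\mu)} = a^\mu e(a\tau)$ with $k=\tfrac{3}{2}$, $l=\tfrac{1}{2}$, pull out $4^{-\nu}$ to get $r_\mu$, and reindex by the parity of $m$ in each component; the paper writes this out only for $c_4^+$. Your extra remarks (citing Proposition~4 of \cite{imamoglu-raum-richter-2014} for the first equality, and noting that the additional $m$ in components~2 and~3 contribute nothing because $H$ vanishes on residues $1,2 \pmod{4}$) fill in points the paper leaves as ``analogous''.
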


\begin{proof}
We prove the formula for~$c^+_4(N)$, the most intricate one, to illustrate the general procedure. Computing the derivative, we find that for
\begin{alignat*}{3}
  f
        & =
  \sum_{n \in 3 + 4\ZZ}
  \mspace{-6mu}
  H(n)\,
  e \big( \tfrac{n}{4} \big)
  \quad &   & \tx{and}\quad
        &
  g
        & =
  \sum_{m \in 1 + 2\ZZ}
  \mspace{-6mu}
  e \big( \tfrac{m^2}{4}\, \tau \big)
  \tx{,}
  \intertext{
    we have for all~$\mu>0$:
  }
  f^{(\mu)}
        & =
  \sum_{n \in 3 + 4\ZZ}
  \mspace{-6mu}
  H(n)\,
  \big(
  \mfrac{n}{4}
  \big)^\mu\,
  e
  \big(
  \tfrac{n}{4}
  \big)
  \quad &   & \tx{and}\quad
        &
  g^{(\mu)}
        & =
  \sum_{m \in 1 + 2\ZZ}
  \mspace{-6mu}
  \big(
  \mfrac{m^2}{4}
  \big)^{\mu}\,
  e
  \big(
  \tfrac{m^2}{4}
  \tau
  \big)
  \tx{.}
\end{alignat*}
The definition of the Rankin--Cohen brackets in~\eqref{eq:def:rankin-cohen} then yields
\begin{gather*}
  [f,g]_\nu
  =
  \sum_{\mu=0}^\nu
  (-1)^\mu\,
  \mbinom{\nu+\frac{1}{2}}{\nu-\mu}
  \mbinom{\nu-\frac{1}{2}}{\mu}
  \sum_{\substack{
      n \in 3 + 4\ZZ \\
      m \in 1 + 2\ZZ
    }}
  \mspace{-6mu}
  H(n)\,
  \big(
  \mfrac{n}{4}
  \big)^\mu\,
  \big(
  \mfrac{m^2}{4}
  \big)^{\nu-\mu}\,
  e\big(
  \big(
    \tfrac{n}{4}
    +
    \tfrac{m^2}{4}
    \big)
  \tau
  \big)
  \tx{.}
\end{gather*}
Grouping terms of the same exponents together, while moving the power of~$4$ to the front, gives
\begin{gather}
  \label{eq:la:holomorphic_projection:hurwitz:holomorphic_part}
  c^+_4(N)
  =
  \sum_{\mu=0}^\nu
  \mfrac{(-1)^\mu}{4^\nu}\,
  \mbinom{\nu+\frac{1}{2}}{\nu-\mu}
  \mbinom{\nu-\frac{1}{2}}{\mu}
  \sum_{\substack{
      N=n+m^2 \\
      n \in 3 + 4\ZZ \\
      m \in 1 + 2\ZZ
    }}
  \mspace{-9mu}
  H(n)\,
  n^\mu\,
  (m^2)^{\nu-\mu}
  \tx{.}
\end{gather}
We now change the indices of the inner sum to be integers. Making the substitutions~$n \leftarrow 4n+3$ and~$m \leftarrow 2m+1$ and then eliminating~$n$ yields
\begin{align*}
  c^+_4(N)
   & =
  \sum_{\mu=0}^\nu
  r_\mu
  \sum_{
    \substack{
      N=4(n+m^2+m+1)
  \\
      m\in\ZZ,n\in\ZZ_{\ge 0}
    }}
  \mspace{-24mu}
  H(4n+3)\,
  ( 4 n+3 )^\mu\,
  ( 4 m^2+4m+1 )^{\nu-\mu}
  \\
   & =
  \sum_{\mu=0}^\nu
  r_\mu
  \sum_{m^2+m+1\leq N \slash 4}
  \mspace{-24mu}
  H( N-4m^2-4m-1)\,
  \big( N-4m^2-4m-1 \big)^\mu\,
  \big( 4 m^2+4m+1 \big)^{\nu-\mu}
  \tx{.}
\end{align*}
The identities for~$c^+_1(N)$,~$c^+_2(N)$, and~$c^+_3(N)$ follow analogously.
\end{proof}

\begin{proposition}
\label{prop:holomorphic_projection:hurwitz:nonholomorphic_part}
We have
\begin{gather*}
  \pi^\hol_{2 + 2\nu}\big( \big[
    E^{\rmv\rmv\,-}_{\frac{3}{2}},\, \theta^{\rmv\rmv}
    \big]^\otimes_\nu \big)
  =
  \Big(
  \sum_{N = 0}^\infty
  c^-_i(N)\,
  e
  \big(
  \tfrac{N}{4}
  \tau
  \big)
  \Big)
  _
  {1\leq i\leq 4}
  \tx{,}
\end{gather*}
with
\begin{alignat*}{2}
  c^-_1(N)
   & =
  \mbinom{\nu-\frac{1}{2}}{\nu} \,
  \lambda_{2\nu+1}^{\mathrm{even}} \big( \mfrac{N}{4} \big)
  \tx{,}\qquad
   &   &
  \tx{if\/ }
  N \equiv 0 \;\pmod{4}
  \tx{;}
  \\
  c^-_2(N)
   & =
  \mbinom{\nu-\frac{1}{2}}{\nu} \,
  \mfrac{1}{2^{2\nu+1}}\,
  \lambda_{2\nu+1}(N)
  \tx{,}\qquad
   &   &
  \tx{if\/ }
  N \equiv 1 \;\pmod{4}
  \tx{;}
  \\
  c^-_3(N)
   & =
  \mbinom{\nu-\frac{1}{2}}{\nu} \,
  \mfrac{1}{2^{2\nu+1}}\,
  \lambda_{2\nu+1}(N)
  \tx{,}\qquad
   &   &
  \tx{if\/ }
  N \equiv 3 \;\pmod{4}
  \tx{;}
  \\
  c^-_4(N)
   & =
  \mbinom{\nu-\frac{1}{2}}{\nu} \,
  \lambda_{2\nu+1}^{\mathrm{odd}} \big( \mfrac{N}{4} \big)
  \tx{,}\qquad
   &   &
  \tx{if\/ }
  N \equiv 0 \;\pmod{4}
  \tx{;}
\end{alignat*}
and~$c^-_i(N) = 0$ in the remaining cases, where~$\lambda_k^{\mathrm{even}}(N)$ and~$\lambda_k^{\mathrm{odd}}(N)$ are as in~\eqref{eq:holomorphic_projection:hurwitz:lambdas}.
\end{proposition}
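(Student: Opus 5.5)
The plan is to expand the tensor Rankin--Cohen bracket componentwise and evaluate each Fourier term with \fref{Lemma}{la:holomorphic_projection_fourier_coefficients:hurwitz_class_numbers:simplification}. Write $E^{\rmv\rmv\,-}_{\frac32} = (e_0^-, e_1^-)$ and $\theta^{\rmv\rmv} = (\theta_0,\theta_1)$. The four components of the tensor product, in the order fixed earlier, are then $(e_0^-\theta_0,\, e_0^-\theta_1,\, e_1^-\theta_0,\, e_1^-\theta_1)$.

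Since both the bracket and $\pi^\hol$ are linear, each component splits into a sum of terms of the following shapes.
\begin{enumeratearabic}
\item Constant-term contributions $\frac{1}{4\pi}[y^{-1/2}, e(\frac{m'^2}{4}\tau)]_\nu$ with $m'$ even. These occur only in the components with $e_0^-$, i.e.\ components $1$ and $2$.
\item Nonconstant contributions $\frac{|m|}{8\sqrt\pi}\,[\Ga(-\frac12,\pi m^2 y)e(-\frac{m^2}{4}\tau),\, e(\frac{m'^2}{4}\tau)]_\nu$.
\end{enumeratearabic}
In both cases $m$ runs over the parity class dictated by the component, and likewise $m'$.

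Terms with $\td n = m'^2/4 = 0$ vanish by \fref{Proposition}{prop:holomorphic_projection:fourier_term_rankin_cohen_zeroth}. Terms with $\td n \le |n|$ also contribute nothing. For $|n| = \td n$ the resulting exponent is $0$, which is not positive. For $|n| > \td n$ the exponent is negative, and holomorphic projection keeps only nonnegative frequencies. I would still justify dropping the exponent-$0$ term, presumably via assumption (i) of \fref{Definition}{def:holomorphic_projection} and the decay of the incomplete Gamma function.

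For the remaining terms, \fref{Lemma}{la:holomorphic_projection_fourier_coefficients:hurwitz_class_numbers:simplification} applies directly with $n = -m^2/4$ and $\td n = m'^2/4$. Taking $m, m' \ge 0$ and accounting for the $\pm$ signs, the term with $|m| < |m'|$ contributes
\[
  \mfrac{|m|}{8\sqrt\pi}\cdot 2\sqrt\pi\,\mbinom{\nu-\frac12}{\nu}\,\mfrac{2}{|m|}\Big(\mfrac{|m'|-|m|}{2}\Big)^{2\nu+1} = \mbinom{\nu-\frac12}{\nu}\,\mfrac{1}{2}\Big(\mfrac{|m'|-|m|}{2}\Big)^{2\nu+1}
\]
at exponent $\frac{m'^2-m^2}{4}$. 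The constant-term analog (the case $m=0$) gives $\frac{1}{4\pi}\cdot 2\pi\binom{\nu-1/2}{\nu}(|m'|/2)^{2\nu+1}$, which has the same shape.

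Next I collect terms. Set $N = m'^2 - m^2$ and factor $N = ab$ with $a = m'-m$, $b = m'+m$ (now allowing signs). Then $a \equiv b \pmod 2$, and $a, b$ are both even exactly when $m, m'$ have the same parity. Summing over signs of $m, m'$ yields a factor $4$ for $m \neq 0$, and $2$ for $m = 0$, which matches the $\frac{1}{2}$ versus $\frac{1}{4\pi}\cdot 2\pi$ normalizations. Each factorization $N = ab$ with $0 < a \le b$ then contributes $\min\{a,b\}^{2\nu+1}/2^{2\nu+1}$ up to the common binomial factor.

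In components $1$ and $4$ both $m$ and $m'$ have the same parity, so $N \equiv 0 \pmod 4$. Writing $a = 2a'$ and $b = 2b'$ with $a'b' = N/4$, the power $2^{2\nu+1}$ cancels. The condition on $m, m'$ becomes $a' \equiv b' \pmod 2$ when $m, m'$ are even (component $1$), and $a' \not\equiv b' \pmod 2$ when they are odd (component $4$). This produces $\lambda^{\mathrm{even}}_{2\nu+1}(N/4)$ and $\lambda^{\mathrm{odd}}_{2\nu+1}(N/4)$ respectively. In components $2$ and $3$ the parities differ, so $N$ is odd and congruent to $1$ or $3 \pmod 4$. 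Every factorization of an odd $N$ has $a, b$ odd, which gives $\lambda_{2\nu+1}(N)/2^{2\nu+1}$.

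I expect the main obstacle to be the bookkeeping that makes the counting over signs exact. This includes the $m = 0$ boundary term and its different normalization, the boundary cases $a = b$ and $|m| = |m'|$, and checking that the min over ordered versus unordered factor pairs is counted exactly once. I would verify this on small $N$, checking for instance that $N/4 = 1$ in component $1$ gives exactly $\binom{\nu-1/2}{\nu}$.
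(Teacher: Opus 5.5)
Your proposal is correct and follows essentially the same route as the paper. You expand componentwise, remove the $[y^{-1/2},1]_\nu$ term with \fref{Proposition}{prop:holomorphic_projection:fourier_term_rankin_cohen_zeroth}, and discard the terms with $|m'|\le|m|$, which have negative exponent or are exponentially decaying at exponent zero. You then evaluate the remaining terms with \fref{Lemma}{la:holomorphic_projection_fourier_coefficients:hurwitz_class_numbers:simplification} and match $(m,m')$ to the factorizations $(m'-m)(m'+m)$ by parity.

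One phrase needs correcting. After your sign count, a pair $a<b$ carries weight $2$ and a pair $a=b$ carries weight $1$, which is exactly what the ordered sum defining $\lambda_k$ requires. So ``each factorization with $0<a\le b$ contributes once'' should read ``each ordered factorization contributes once.''
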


\begin{proof}
We give full details for the calculation of~$c^-_1(N)$, and at the end of the proof indicate the required adjustments to determine the remaining~$c^-_i(N)$. First, we evaluate
\begin{align*}
      &
  \big( \big[ E^{\rmv\rmv\,-}_{\frac{3}{2}} , \theta^{\rmv\rmv} \big]^\otimes_\nu \big)_1
  =
  \Bigg[
    \mfrac{1}{4 \pi} y^{-\frac{1}{2}}
    +
    \mfrac{1}{8 \sqrt{\pi}}
    \sum_{m \in 2\ZZ \setminus \{0\}}
    |m|\,
    \Gamma\big( -\tfrac{1}{2}, \pi m^2 y \big)\,
    e\big( -\tfrac{m^2}{4} \tau \big)
    ,\;
    \sum_{\td{m} \in 2\ZZ}
    e\big( \tfrac{\td{m}^2}{4} \tau \big)
    \Bigg]_\nu
  \\
  ={} &
  \Big[
    \mfrac{1}{4 \pi} y^{-\frac{1}{2}}
    ,\,
    1
    \Big]_\nu
  \pluspad
  \sum_{\td{m} \in 2\ZZ \setminus\{0\}}
  \Big[
    \mfrac{1}{4 \pi} y^{-\frac{1}{2}}
    ,\,
    e\big( \tfrac{\td{m}^2}{4} \tau \big)
    \Big]_\nu
  +
  \sum_{\substack{m \in 2\ZZ \setminus \{0\} \\ \td{m} \in 2\ZZ \setminus\{0\}}}
  \Big[
    \mfrac{1}{8 \sqrt{\pi}}
    |m|\,
    \Gamma\big( -\tfrac{1}{2}, \pi m^2 y \big)\,
    e\big( -\tfrac{m^2}{4} \tau \big)
    ,\,
    e\big( \tfrac{\td{m}^2}{4} \tau \big)
    \Big]_\nu
  \tx{.}
\end{align*}
The first term vanishes under~$\pi^\hol_{2+2\nu}$ by \fref{Proposition}{prop:holomorphic_projection:fourier_term_rankin_cohen_zeroth},
and the latter two terms are determined in \fref{Lemma}{la:holomorphic_projection_fourier_coefficients:hurwitz_class_numbers:simplification}.
More precisely, the second term becomes
\begin{gather*}
  \sum_{\td{m} \in 2\ZZ\setminus \{0\}}
  \mfrac{1}{2}
  \mbinom{\nu-\frac{1}{2}}{\nu}\,
  \Big( \mfrac{|\td{m}|}{2} \Big)^{2\nu+1}\,
  e\big(
  \tfrac{\td{m}^2}{4}
  \tau
  \big)
  =
  \mbinom{\nu-\frac{1}{2}}{\nu}\,
  \sum_{\td{m} = 1}^\infty
  {\td{m}}^{2\nu+1}
  e(\td{m}^2 \tau)
\end{gather*}
after taking~$\td{m} \ra 2\td{m}$ and noting that a given~$|\td{m}| > 0$ occurs twice as~$\td{m}$ ranges over~$\ZZ \setminus \{0\}$.
To handle the third term, we first observe that~$\pi^\hol_{2+2\nu}$ annihilates by definition contributions of negative exponents, i.e., the contributions with~$|\td{m}| < |m|$, and also annihilates the contributions of exponent zero, i.e., the contributions with~$|\td{m}| = |m|$, which decay exponentially.
The third term becomes
\begin{align*}
  \mfrac{1}{2}
  \mbinom{\nu-\frac{1}{2}}{\nu}
  \sum_{\substack{m,\td{m} \in 2\ZZ \setminus \{0\} \\ |\td{m}| > |m|}}
  \Big(
  \mfrac{|\td{m}|}{2}
  -
  \mfrac{|m|}{2}
  \Big)^{2\nu + 1}\,
  e
  \big(
  \big(
    \tfrac{\td{m}^2}{4}
    -
    \tfrac{m^2}{4}
    \big)
  \tau
  \big)
   & =
  2
  \mbinom{\nu-\mfrac{1}{2}}{\nu}
  \sum_{\substack{m,\td{m} \in \ZZ_{> 0}            \\ \td{m} > m}}
  (\td{m}-m)^{2\nu+1}
  e
  \big(
  (\td{m}^2-m^2)
  \tau
  \big)
\end{align*}
by similar reasoning to before. Combining the second and third term, the coefficient of~$e(\frac{N}{4} \tau)$ is then
\begin{align*}
  \sum_{\substack{ \td{m} > 0     \\ \td{m}^2 = \frac{N}{4}}}
  \td{m}^{2\nu+1}
  \,+\,
  2
  \mspace{-24mu}
  \sum_{\substack{ \td{m} > m > 0 \\ (\td{m}-m)(\td{m}+m) = \frac{N}{4}}}
  \mspace{-12mu}
  (\td{m}-m)^{2\nu+1}
   & =
  \lambda_{2\nu+1}^{\mathrm{even}}(\tfrac{N}{4})
  \tx{.}
\end{align*}

In the calculation of~$c^-_4(N)$, we consider the sum over odd~$m$ and~$\td{m}$, in which case the exponent~$\frac{N}{4} = \frac{\td{m}^2}{4}-\frac{m^2}{4}$ is still an integer, but the divisors of
\begin{gather*}
  \mfrac{N}{4}
  =
  \big(
  \mfrac{\td{m}-m}{2}
  \big)
  \big(
  \mfrac{\td{m}+m}{2}
  \big)
\end{gather*}
have different parity. Finally, for~$c^-_2(N)$ and~$c^-_3(N)$ observe that~$\td{m}^2-m^2$ are congruent to~$1$ or~$3$ modulo~$4$, respectively.
\end{proof}

\section{Proof of the main theorem}%
\label{sec:proof_main_theorem}

\begin{proof}[Proof of \fref{Theorem}{mainthm:hurwitz_class_number_recursions}]
Observe that
\begin{align*}
  \pi^\hol_{2+2\nu} \big(
  \big[
    E^{\rmv\rmv}_{\frac{3}{2}},
    \theta^{\rmv\rmv}
    \big]_\nu^\otimes
  \big)
  & \;\in\;
  \rmS_{2+2\nu}\bigl( \ov\rhoH \otimes \rhoH \bigr)
  \quad\tx{and}
  \\
  \pi^\hol_{2+2\nu} \big(
  \big[
    E^{\rmv\rmv}_{\frac{3}{2}},
    \theta^{\rmv\rmv}
    \big]_\nu^\otimes
  \big)
  &=
  \pi^\hol_{2+2\nu} \big(
  \big[
    E^{\rmv\rmv +}_{\frac{3}{2}},
    \theta^{\rmv\rmv}
    \big]_\nu^\otimes
  \big)
  +
  \pi^\hol_{2+2\nu} \big(
  \big[
    E^{\rmv\rmv -}_{\frac{3}{2}},
    \theta^{\rmv\rmv}
    \big]_\nu^\otimes
  \big)
  \tx{.}
\end{align*}
The formulas in \fref{Theorem}{mainthm:hurwitz_class_number_recursions} then follow from \fref{Corollary}{cor:representation_theory:hurwitz_modular_forms_support}, \fref{Proposition}{prop:holomorphic_projection:hurwitz:holomorphic_part}, and \fref{Proposition}{prop:holomorphic_projection:hurwitz:nonholomorphic_part}.
More specifically, \hyperref[it:cor:representation_theory:hurwitz_modular_forms_support:weight4]{Statement~\ref*{it:cor:representation_theory:hurwitz_modular_forms_support:weight4}}
of \fref{Corollary}{cor:representation_theory:hurwitz_modular_forms_support} shows that if~$\nu = 1$, then
\begin{gather*}
  c_1^+(4N) + c_1^-(4N) = 0
  \quad\tx{and}\quad
  c_4^+(4N) + c_4^-(4N) = 0
  \tx{,}
\end{gather*}
which will yield the first and second recursion in \fref{Theorem}{mainthm:hurwitz_class_number_recursions}, respectively.
\hyperref[it:cor:representation_theory:hurwitz_modular_forms_support:weight6]{Statement~\ref*{it:cor:representation_theory:hurwitz_modular_forms_support:weight6}} of the corollary shows that if~$\nu = 2$, then
\begin{gather*}
  c_1^+(8N) + c_1^-(8N) = 0
  \quad\tx{and}\quad
  c_4^+(8N) + c_4^-(8N) = 0
  \tx{,}
\end{gather*}
which will yield the third and fourth. We have to evaluate~$c_1^\pm$ and~$c_4^\pm$ to prove the theorem.

The coefficients~$c_1^-$ (even case) and~$c_4^-$ (odd case) in \fref{Proposition}{prop:holomorphic_projection:hurwitz:nonholomorphic_part} equal the right hand sides of the recursion in \fref{Theorem}{mainthm:hurwitz_class_number_recursions} up to rescaling by a binomial factor. The polynomials~$g^{\mathrm{even}/\mathrm{odd}}_{2\nu + 1}$ in \fref{Theorem}{mainthm:hurwitz_class_number_recursions} arise as the product of the inverse of these binomial factors with the coefficients of the Hurwitz class numbers in the expressions for~$c_1^+$~(even case) and~$c_4^+$~(odd case) given in \fref{Proposition}{prop:holomorphic_projection:hurwitz:holomorphic_part}, after replacing~$N$ by~$4 N$.
We obtain
\begin{align*}
  g^{\mathrm{even}}_{2\nu+1}(N, m)
   & =
  -\mbinom{\nu-\frac{1}{2}}{\nu}^{-1}\,
  \sum_{\mu=0}^\nu
  r_\mu\,
  \big(
  4N - 4m^2
  \big)^\mu\,
  (4m^2)^{\nu-\mu}
  \tx{,}
  \\
  g^{\mathrm{odd}}_{2\nu+1}(N, m)
   & =
  -\mbinom{\nu-\frac{1}{2}}{\nu}^{-1}\,
  \sum_{\mu=0}^\nu
  r_\mu\,
  \big(
  4N - 4m^2 - 4m - 1
  \big)^\mu\,
  \big(
  4m^2 + 4m + 1
  \big)^{\nu-\mu}
  \tx{,}
\end{align*}
which after simplification yields the expressions in \fref{Theorem}{mainthm:hurwitz_class_number_recursions}.
\end{proof}

To conclude, we show how the relations of~\cite{cohen-1975} and~\cite{mertens-2014} arise from our framework.
Observe that, by Equation~\eqref{eq:la:holomorphic_projection:hurwitz:holomorphic_part},
\begin{gather*}
  c_4^+(4N)
  =
  \sum_{\mu=0}^\nu
  r_\mu
  \sum_{\substack{
      4N=n+m^2 \\
      n \in 3 + 4\ZZ \\
      m \in 1 + 2\ZZ
    }}
  \mspace{-12mu}
  H(n)\,
  n^\mu\,
  (m^2)^{\nu-\mu}
  \tx{,}
\end{gather*}
and similarly, one can show
\begin{gather*}
  c_1^+(4N)
  =
  \sum_{\mu=0}^\nu
  r_\mu
  \sum_{\substack{
      N=n+m^2 \\
      n \in 4\ZZ \\
      m \in 2\ZZ
    }}
  \mspace{-12mu}
  H(n)\,
  n^\mu\,
  (m^2)^{\nu-\mu}
  \tx{.}
\end{gather*}
If~$4N=n+m^2$ is an integer, then either~$n \equiv -1 \,\pmod{4}$ and~$m$ is odd, or~$n \equiv 0 \,\pmod{4}$ and~$m$ is even.
Hence, for fixed~$N$, the two sums together run over all possibilities for~$n$ and~$m$, yielding
\begin{gather*}
  c_1^+(4N)+c_4^+(4N)
  =
  \sum_{\mu=0}^\nu
  r_\mu
  \sum_{N=n+m^2}
  H(n)
  n^\mu
  (m^2)^{\nu-\mu}
  =
  \sum_{\mu=0}^\nu
  r_\mu
  \sum_{m^2\le N}
  H(N-m^2)
  \big(
  N-m^2
  \big)^\mu
  \big(
  m^2
  \big)^{\nu-\mu}
  \tx{.}
\end{gather*}
At the same time, we have
\begin{gather*}
  c_1^-(4N)+c_4^-(4N)
  =
  \mbinom{\nu-\frac{1}{2}}{\nu}
  \lambda_k(N)
  \tx{.}
\end{gather*}
\hyperref[it:cor:representation_theory:hurwitz_modular_forms_support:weight6to10]{Statement~\ref*{it:cor:representation_theory:hurwitz_modular_forms_support:weight6to10}} of \fref{Corollary}{cor:representation_theory:hurwitz_modular_forms_support} gives
\begin{gather*}
  \big(
  c_1^+(4N)+c_4^+(4N)
  \big)
  +
  \big(
  c_1^-(4N)+c_4^-(4N)
  \big)
  =0
  \tx{.}
\end{gather*}
Thus,
\begin{gather*}
  \sum_{m^2\le 4N}
  g_{2\nu+1}(N,m)
  H(4N-m^2)
  +
  \lambda_{2\nu+1}(N)
  =0
\end{gather*}
for~$\nu\in\{1,2,3,4,6\}$, where
\begin{gather*}
  g_{2\nu+1}(N,m)
  =
  - \mbinom{\nu-\frac{1}{2}}{\nu}^{-1}
  \sum_{\mu=0}^\nu
  r_\mu
  (4N-m^2)^\mu
  (m^2)^{\nu-\mu}
  \tx{.}
\end{gather*}
Evaluating this expression for the specified values of~$\nu$ yields the family of relations in~\cite{cohen-1975}.

Furthermore, observe~$N$ is odd if and only if~$N \equiv 1 \,\pmod{4}$ or~$N \equiv 3 \,\pmod{4}$. \fref{Proposition}{prop:holomorphic_projection:hurwitz:holomorphic_part} yields
\begin{gather*}
  c_2^+(N)+c_3^+(N)
  =
  \sum_{\mu=0}^\nu
  r_\mu
  \sum_{m^2\le N}
  H(N-m^2)
  \big(
  N-m^2
  \big)^\mu
  \big(
  m^2
  \big)^{\nu-\mu}
\end{gather*}
for all odd~$N$.
Likewise, \fref{Proposition}{prop:holomorphic_projection:hurwitz:nonholomorphic_part} yields
\begin{gather*}
  c_2^-(N)+c_3^-(N)
  =
  \mbinom{\nu-\mfrac{1}{2}}{\nu}
  \mfrac{1}{2^{2\nu+1}}
  \lambda_{2\nu+1}(N)
\end{gather*}
for odd~$N$.
\hyperref[it:cor:representation_theory:hurwitz_modular_forms_support:weight4]{Statement~\ref*{it:cor:representation_theory:hurwitz_modular_forms_support:weight4}} of \fref{Corollary}{cor:representation_theory:hurwitz_modular_forms_support} shows that when~$\nu=1$,
\begin{gather*}
  \big(
  c_2^+(N)+c_3^+(N)
  \big)
  +
  \big(
  c_2^-(N)+c_3^-(N)
  \big)
  =0
  \tx{.}
\end{gather*}
Evaluating as in \fref{Theorem}{mainthm:hurwitz_class_number_recursions} yields the formula from~\cite{mertens-2014}.
Note that, unlike in \fref{Theorem}{mainthm:hurwitz_class_number_recursions}, looking at the components individually does not help, as the second and third components have disjoint support.

\vspace{1.5\baselineskip}
\phantomsection
\addcontentsline{toc}{section}{References}
\markright{References}
\label{sec:references}
{
  \sloppy
  \linespread{0.8}
  \printbibliography[heading=none]
}

\filbreak
\Needspace*{5\baselineskip}
\noindent%
\rule{\textwidth}{0.15em}
\\\nopagebreak

{\small\noindent
  Matthew Ortiz\\\nopagebreak
  Department of Mathematics\\\nopagebreak
  University of North Texas\\\nopagebreak
  Denton, TX 76203, USA\\\nopagebreak
  E-mail: \url{matthewortiz2@my.unt.edu}
}\vspace{.5\baselineskip}

{\small\noindent
  Martin Raum\\\nopagebreak
  Chalmers tekniska högskola och G\"oteborgs Universitet\\\nopagebreak
  Institutionen f\"or Matematiska vetenskaper\\\nopagebreak
  SE-412 96 G\"oteborg, Sweden\\\nopagebreak
  E-mail: \url{martin@raum-brothers.eu}%
}\vspace{.5\baselineskip}

{\small\noindent
  Olav K. Richter\\\nopagebreak
  Department of Mathematics\\\nopagebreak
  University of North Texas\\\nopagebreak
  Denton, TX 76203, USA\\\nopagebreak
  E-mail: \url{richter@unt.edu}%
}%

\ifdraft{%
  \listoftodos%
}

\end{document}

